\definecolor{my-blue}{rgb}{0.0,0.0,0.6}
\definecolor{my-red}{rgb}{0.5,0.0,0.0}
\definecolor{my-green}{rgb}{0.0,0.5,0.0}
\numberwithin{equation}{section}
\theoremstyle{definition}
\newtheorem{definition}{Definition}[section]
\theoremstyle{remark}
\newtheorem{remark}{Remark}[section]
\theoremstyle{plain}
\newtheorem{theorem}{Theorem}[section]
\newtheorem{lemma}[theorem]{Lemma}
\newcommand*{\beq}{\begin{equation}}
\newcommand*{\eeq}{\end{equation}}
\newcommand*{\bal}{\begin{aligned}}
\newcommand*{\eal}{\end{aligned}}
\newcommand*{\nn}{\nonumber}
\begin{document}
                      
%%%%%% NOTATION
\providecommand{\abs}[1]{\left\vert#1\right\vert}
\newcommand*{\one}{{{\rm 1\mkern-1.5mu}\!{\rm I}}}
\newcommand*{\esssup}{\mathop{{\rm ess~sup}}}
\newcommand*{\e}{\varepsilon}
\newcommand*{\w}{\omega}
\newcommand*{\cA}{{\mathcal A}} 
\newcommand*{\cG}{{\mathcal G}}
\newcommand*{\cI}{{\mathcal I}}
\newcommand*{\cK}{{\mathcal K}}
\newcommand*{\cM}{{\mathcal M}}
\newcommand*{\cN}{{\mathcal N}}
\newcommand*{\cP}{{\mathcal P}}
\newcommand*{\cQ}{{\mathcal Q}}
\newcommand*{\cX}{{\mathcal X}}
\newcommand*{\cY}{{\mathcal Y}}
\newcommand*{\sB}{{\mathscr B}}
\newcommand*{\sC}{{\mathscr C}}
\newcommand*{\sF}{{\mathscr F}}
\newcommand*{\sR}{{\mathscr R}}
\newcommand*{\E}{{\mathbb E}}
\renewcommand*{\P}{{\mathbb P}}
\newcommand*{\N}{{\mathbb N}}
\newcommand*{\R}{{\mathbb R}}
\newcommand*{\fhat}{{\hat f}}
\newcommand*{\qhat}{{\hat q}}
\newcommand*{\zhat}{{\hat z}}
\newcommand*{\muhat}{{\hat\mu}}
\newcommand*{\pihat}{{\hat\pi}}
\newcommand*{\abar}{{\bar a}} 
\newcommand*{\qbar}{{\bar q}}
\newcommand*{\xbar}{{\bar x}}
\newcommand*{\zbar}{{\bar z}}
\newcommand*{\mubar}{{\bar\mu}}
\newcommand*{\ombar}{{\bar\omega}}
\newcommand*{\xtil}{{\tilde x}}
\newcommand*{\ztil}{{\tilde z}}
\newcommand*{\Z}{{\mathbb Z}}
\newcommand*{\kS}{{\mathfrak S}}
\newcommand*{\si}{\sigma}
\def\mydot{{\,{}_{{}^{\scriptstyle\centerdot}}}}
\def\X{\cX}
\def\Y{\cY}
\def\unK{\underline K}
\def\Iq{I_{\rm quen}} %velocity quenched rate function 
\def\Ia{I_{\rm avg}}  %velocity averaged rate function
\def\Hq{H_{\rm quen}}	%entropy quenched rate function (forward paths.. main result)
\def\Ha{H_{\rm avg}} 	%entropy quenched rate function (forward paths.. main result)
\def\bigom{\mathbf\Omega} %bold Omega: space of paths and environments
\def\SP{\cS} %the ac set
\def\range{{\sR}} % range of first step
\def\wz{\eta} %environment and path
\def\MC{\cQ} %set of Markov chains
\def\measures{\cM_1} %set of probability measures
\def\Hrange{R} %range hypothesis
\def\Hergod{E} %ergodicity hypothesis
\def\Hellip{R} %regularity hypothesis
\def\Hmom{M} %moment hypothesis
\def\step{M} %size of step
\def\Sop{S} %generalized shift operator
\def\Proj{V} %projection operator
\def\Sopr{S^+}
\def\Sopl{S^-}
\def\Soplr{S}
\def\Uop{U}
\def\pr{p^+}
\def\pl{p^-}
\def\quniv{\qbar}
\def\M{\cM} %set of measures
\def\K{\cK} %class K
\def\Ber{B}
\def\rrange{\range} %\range in appendix lemma
\def\W{\mathbf W}

\begin{frontmatter}
\title{Process-level quenched large deviations for random walk in random environment}
\runtitle{Quenched LDP for RWRE}

\begin{aug}
\author{\fnms{Firas} \snm{Rassoul-Agha$^{\rm a,1}$}
\ead[label=e1]{firas@math.utah.edu}
\ead[label=u1,url]{http://www.math.utah.edu/$\sim$firas}}
\and
\author{\fnms{Timo} \snm{Sepp\"al\"ainen$^{\rm b,2}$}\ead[label=e2]{seppalai@math.wisc.edu}\ead[label=u2,url]{http://www.math.wisc.edu/$\sim$seppalai}
}
\runauthor{F. Rassoul-Agha and T. Sepp\"al\"ainen}

\affiliation{University of Utah and University of Wisconsin-Madison}

\address{$^{\rm a}$Department of Mathematics,
University of Utah,
155 South 1400 East,
Salt Lake City, UT 84109,
USA.\\
\printead{e1}}%, \printead{u1}}
\address{$^{\rm b}$Department of Mathematics,
University of Wisconsin-Madison,
419 Van Vleck Hall,
Madison, WI 53706,
USA.\\
\printead{e2}}%, \printead{u2}}

\end{aug}

%\usdate %comment out when arxiving or submitting
\centerline{}
%\centerline{\footnotesize\today\ \currenttime\ MDT}
\centerline{\footnotesize Received 4 September 2009; revised 8 April 2010; accepted 16 April 2010}
%\footnotetext{\today.} % use this line instead when arxiving
                      % or submitting, and write today's date by hand
%\footnotetext{$^1$Department of Mathematics, University of Utah.}
%\footnotetext{$^2$Mathematics Department, University of Wisconsin at Madison.}
\footnotetext{$^1$Supported in part by NSF Grant DMS-0747758.}
\footnotetext{$^2$Supported in part by NSF Grant DMS-0701091 and by the Wisconsin Alumni Research Foundation.}

\begin{abstract}
We consider a bounded step size random walk in an ergodic
random environment with some ellipticity, on an integer lattice of arbitrary 
dimension.
We prove  a level 3 large deviation principle, under almost every  environment, 
with rate function related to a relative entropy.
\end{abstract}

\begin{abstract}[language=french]
Nous consid\'erons une marche al\'eatoire en environment 
al\'eatoire ergodique. La marche est elliptique et \`a pas born\'es.
Nous prouvons un principe de grandes d\'eviations au niveau 3, sous presque tout
environnement, avec une fonctionnelle d'action  li\'ee \`a une entropie relative.
\end{abstract}

\begin{keyword}[class=AMS]
\kwd{60K37}
\kwd{60F10}
\kwd{82D30}
\kwd{82C44}
\end{keyword}

\begin{keyword}
\kwd{random walk}
\kwd{random environment}
\kwd{RWRE}
\kwd{large deviation}
%\kwd{point of view of the particle}
\kwd{environment process}
\kwd{relative entropy}
\kwd{homogenization}
\end{keyword}

\end{frontmatter}

%\section{The model}
\section{Introduction}{\label{intro}
We describe the standard model of random walk in random environment (RWRE) on $\Z^d$.
Let $\Omega$ be a Polish space and $\kS$ its Borel $\si$-algebra. 
Let $\{T_z:{z\in\Z^d}\}$ be a group of 
continuous commuting bijections on $\Omega$: $T_{x+y}=T_xT_y$ and $T_0$ is the identity.
Let $\P$ be a $\{T_z\}$-invariant probability measure on $(\Omega,\kS)$ that is ergodic under this group.
In other words, the $\sigma$-algebra of Borel sets invariant under $\{T_z\}$ is trivial under $\P$.

Denote the space of probability distributions
on $\Z^d$ by 
 $\cP=\{(p_z)_{z\in\Z^d}\in[0,1]^{\Z^d}:\sum_z p_z=1\}$ and
 give it the weak topology or, equivalently, the restriction of the product topology.
  Let $\w\mapsto(p_z(\w))_{z\in\Z^d}$ be a 
continuous mapping from $\Omega$ to $\cP$. For $x,y\in\Z^d$ define $\pi_{x,y}(\w)=p_{y-x}(T_x\w)$.
We call $\w$ and also $(\pi_{x,y}(\w))_{x,y\in\Z^d}$ an environment because it determines the transition 
probabilities of a Markov chain.
%We call $\w_x=(\w_{x,z})_{z\in\Z^d}\in\cP$ the environment at point $x\in\Z^d$. 

The set of admissible steps is denoted by $\range=\{z:\E[\pi_{0,z}]>0\}$.  
One can then redefine  %$\cP$ as the compact space
  $\cP=\{(p_z)_{z\in\range} \in[0,1]^\range:\sum_z p_z=1\}$ and
%  Then %we can also assume $\Omega$ compact, and 
  transition probabilities $\pi_{x,y}$ are defined only 
  for $x,y\in\Z^d$ such that  $y-x\in\range$.  
%The first two assumptions concern the set $\range$. 

Given  $\w$ and a starting point $x\in\Z^d$, let $P_x^\w$ be the law of the Markov chain $X_{0,\infty}=(X_n)_{n\ge0}$ on $\Z^d$, starting at $X_0=x$ and having
transition probabilities $(\pi_{y,y+z}(\w))$. That is,
	\[P_x^\w\{X_{n+1}=y+z\,|\,X_n=y\}=\pi_{y,y+z}(\w),\text{ for all $y,z\in\Z^d.$}\] 
$X_{0,\infty}$ is called a random walk in environment $\w$ and $P_x^\w$ is called the {\sl quenched} distribution.  The {\sl joint} distribution is 
  $P_x(dx_{0,\infty},d\w)=P_x^\w(dx_{0,\infty})\P(d\w)$.  
Its marginal on $(\Z^d)^{\Z_+}$ is also denoted by $P_x$ and   called the {\sl averaged} 
(or {\sl annealed}) 
distribution since $\w$ is averaged out:  
	\[P_x(A)=\int P_x^\w(A)\,\P(d\w) \quad\text{  for a 
measurable $A\subset(\Z^d)^{\Z_+}$.}\]
%We say $\P$ is i.i.d.\ when $\{(\pi_{x,x+z})_{z\in\Z^d}:x\in\Z^d\}$ is an i.i.d.\ collection of random variables.
The canonical case of the above setting is $\Omega=\cP^{\Z^d}$ and $p_z(\w)=(\w_0)_z$. 

Next  a quick description of the problem we are interested in.
Assume given a sequence of probability measures $Q_n$ on a Polish space $(\X,\sB_\X)$
and a lower semicontinuous  function $I:\X\to[0,\infty]$.  Then  
  the {\sl large deviation upper bound} holds with   {\sl rate function} $I$ if
\begin{align*}
	&\varlimsup_{n\to\infty}n^{-1}\log Q_n(C)\le-\inf_C I\text{ for all closed sets }C\subset\X.
\end{align*}
Similarly,    rate function $I$ governs the {\sl large deviation lower bound} if
\begin{align*}
	&\varliminf_{n\to\infty}n^{-1}\log Q_n(O)\ge-\inf_O I\text{ for all open sets }O\subset\X. 
\end{align*}
If both hold with the same rate function $I$, then  the {\sl large deviation
principle} (LDP) holds with rate $I$.   We shall use basic, well known
features  of large deviation theory and relative entropy
without citing every instance.  The reader can consult  references 
\citep{demb-zeit-ldp}, \citep{denholl-ldp},  
\citep{deus-stro-ldp}, \citep{rass-sepp-ldp}, and \citep{vara-ldp}. 

If the   upper bound
(resp.\ lower bound, resp.\ LDP) holds with some function $I:\X\to[0,\infty]$, then it also holds with   the {\sl lower semicontinuous regularization}  $I_{\rm{lsc}}$ 
of $I$ defined by 
			\[I_{\rm{lsc}}(x)=\sup\Big\{\inf_O I:x\in O\text{ and $O$ is open}\Big\}.\]
%see \citep{??} or Lemmas 2.10 and 2.13 of \citep{rass-sepp-ldp}.  
Thus  the
rate function can  be required to be lower semicontinuous, and then it 
is unique.
%We say that a rate function $I$ is {\sl tight} 
%(much of the literature uses the term  {\sl good}) if its sublevel sets $\{I\le c\}$
% are compact for all  $c\in\R$.  
% {\bf (*** we are in compact world most of the time -- is this needed??}) 

%{\sc Bibliographical Remark}
Large deviations arrange themselves more or less naturally in three levels.  Most of the work
on quenched large deviations for 
 RWRE has been at 
{\sl level 1}, that is, on large deviations for $P_0^\w\{X_n/n\in\cdot\}$.
 Greven and den Hollander \citep{grev-holl-94} considered the product one-dimensional nearest-neighbor case, 
 Comets, Gantert, and Zeitouni \citep{come-gant-zeit-00} the ergodic one-dimensional nearest-neighbor case,
Yilmaz \citep{yilm-09-b} the ergodic one-dimensional case with bounded step size, 
Zerner \citep{zern-98}  
the multi-dimensional product nestling case, and Varadhan \citep{vara-03}  the general ergodic multidimensional case with
bounded step size.  Rosenbluth \citep{rose-thesis} gave a variational formula for the rate function in \citep{vara-03}.
 {\sl Level 2} quenched large deviations appeared in the work of 
   Yilmaz \citep{yilm-09-b} for the distributions
 $P_0^\w\{n^{-1}\sum_{k=0}^{n-1}\delta_{T_{X_k}\w,Z_{k+1}}\in\cdot\}$. 
Here $Z_k=X_k-X_{k-1}$ denotes the step of the walk.  
 
%The general multivariate version is the one for
%$P_0^\w\{n^{-1}\sum_{k=0}^{n-1}\delta_{T_{X_k}\w,Z_{k+1,k+\ell}}\in\cdot\}$, for a given $\ell\ge1$.
%\end{remark}

Our object of study, {\sl level 3} or {\sl process level} large deviations concerns the 
{\sl empirical process}  
 \beq R_n^{1,\infty}=n^{-1}\sum_{k=0}^{n-1}\delta_{T_{X_k}\w,Z_{k+1,\infty}}\label{empproc}\eeq
 where  $Z_{k+1,\infty}=(Z_i)_{i\ge k+1}$ denotes the entire sequence of future steps.  
Quenched distributions
	$P_0^\w\{R_n^{1,\infty}\in \cdot\}$ are   probability measures on the space 
  $\M_1(\Omega\times\range^\N)$.  This is  the space of Borel probability measures on
  $\Omega\times\range^\N$ endowed with the weak topology generated by bounded
  continuous functions.  
  
 The levels do form a hierarchy: higher level LDPs can be projected down to 
 give LDPs at lower levels.  Such results are called  {\sl contraction
principles} in large deviation theory.

The main technical contribution of this work is the  extension 
of a homogenization   argument that proves the upper bound to the multivariate level 2 setting.  
This idea goes back to  Kosygina, Rezakhanlou, and Varadhan \citep{kosy-reza-vara-06} in the context of
diffusions with random drift,   and was   used by both Rosenbluth \citep{rose-thesis} and Yilmaz \citep{yilm-09-b} 
to prove their  LDPs.  

Before turning to specialized assumptions and notation, here are some general 
conventions.     $\Z_+$, $\Z_-$, and $\N$ denote, respectively, the set of non-negative, 
non-positive, and positive integers.
  $|\cdot|$ denotes the $\ell^\infty$-norm on $\R^d$.  $\{e_1,\dotsc,e_d\}$ is the canonical basis of $\R^d$. 
In addition to  $\measures(\X)$ for the space 
  of probability measures on $\X$, we write  $\MC(\X)$ for  the set  of 
Markov transition kernels on $\X$.  Our spaces are   Polish and the $\sigma$-algebras
Borel.  
Given   $\mu\in\M_1(\X)$ and  
$q\in\MC(\X)$,
 $\mu\times q$ is the probability measure on 
$\X\times\X$ defined by 
\[\mu\times q(A\times B)=\int \one_A(x)q(x,B)\,\mu(dx)\]
and $\mu q$ is  its second marginal.
 For a probability measure $P$,
$E^P$ denotes the corresponding expectation operator. Occasionally  $P(f)$  may replace
 $E^P[f]$. %$\sC_b(\X)$ denotes the set of bounded continuous functions on a topological space $\X$.

\section{Main result}
Fix a dimension $d\ge 1$.  
%Recall that $(\Omega,\kS,\P,\{T_z:{z\in\Z^d}\})$ is an ergodic dynamical system.
%Presently, very little can be done in this generality. Therefore, we will have to make some structural assumptions. 
Following are the hypotheses for the level 3 LDP. In Section \ref{prelim}
we refine these to state precisely what is used by different parts of the proof.
\begin{align}\label{RWRE-compact} \text{ $ \range$  is finite and $\Omega$ is a compact metric space.} \end{align}
\begin{align}\label{RWRE-elliptic}
%	\begin{split}
	\text{$ \forall x\in\Z^d$,  $ \exists m\in\N$ and $ z_1,\dotsc,z_m\in\rrange$ such that 
	$x=z_1+\cdots+z_m$.}
%	\end{split}
\end{align}
%
%	
%Note that, when $d=1$,  \citep{come-gant-zeit-00} and \citep{grev-holl-94} assume $\range=\{-1,1\}$.
%Similarly, when $d\ge1$, \citep{rose-thesis} and \citep{yilm-09-b} assume $\range=\{z:|z|\le M\}$, for some $M<\infty$.
%
%This regularity assumption says that either all environments allow the jump or none does. 
%The second part of the regularity hypothesis simply says that we cannot restrict ourselves to a smaller lattice.
%Lastly we need this control over the random transitions:  
%
	\begin{align}\label{RWRE-moment}
%	\begin{split}
	%\range\text{ is finite and }%\exists\rrange\subset\range,\ 
	\text{$\exists p>d$ such that $\E[\,\abs{\log\pi_{0,z}}^p\,]<\infty$  $\forall z\in\rrange$.}
%	\end{split}
	\end{align}

When $\range$ is finite the canonical $\Omega=\cP^{\Z^d}$  is compact.
%The first part of this assumption says that the walk has bounded step size. 
The commonly used  assumption of 
uniform ellipticity, namely the existence of $\kappa>0$ such that
 $\P\{\pi_{0,z}\ge\kappa\}=1$ for  
$z\in\range$ and $\range$ contains the $2d$ unit vectors,  
  implies  assumptions \eqref{RWRE-elliptic} and  \eqref{RWRE-moment}. 

 %assumed, for example,
%in \citep{grev-holl-94,come-gant-zeit-00,zern-98}, with $\range=\{z:|z|=1\}$. 
%Assumption \eqref{RWRE-moment},
%with $\range=\{z:|z|\le M\}$ for some $M<\infty$, is the standing assumption in \citep{yilm-09-b}
%and \citep{rose-thesis}.   It is noteworthy that \citep{vara-03} does assume $\range$ is finite, but only requires $\P\{\pi_{0,z}\ge\kappa\}=1$ for $|z|=1$.
%This corresponds to \eqref{RWRE-moment} with $\rrange=\{z:|z|=1\}$.
%{\bf ** maybe we too only need \eqref{RWRE-elliptic} for a subset of $\range$ that still generates $\Z^d$? looks like we need all of
%$\range$ to get the approximation argument going, in the lower bound **}

%This assumption says that $\rrange$ not only generates $\Z^d$ as an additive group, but it can even generate $\Z^d$ through  non-negative combinations. It is satisfied, for example, when $\rrange$ contains the nearest neighbors of the origin. %, which is 
%assumed in \citep{grev-holl-94,come-gant-zeit-00,zern-98,rose-thesis,yilm-09-b,vara-03}.

  We need notational apparatus for backward, forward, and bi-infinite paths.  
The  increments 
 of   a bi-infinite path $(x_i)_{i\in\Z}$ in $\Z^d$  with $x_0=0$
are denoted by $z_i=x_i-x_{i-1}$. The  sequences $(x_i)$ and $(z_i)$ 
are in 1-1 correspondence. Segments of sequences are denoted by     
$z_{i,j}=(z_i,z_{i+1},\cdots,z_j)$,   also for $i=-\infty$ or $j=\infty$, and also 
%We abbreviate 
%$\zpath=z_{-\infty,\infty}=(z_i)_{i\in\Z}$, $\zpath_-=z_{-\infty,0}$, and $\zpath_+=z_{1,\infty}$.
  for   random variables:   
$Z_{i,j}=(Z_i,Z_{i+1},\cdots,Z_j)$. %, $\Zpath$, $\Zpath_\pm$, 
% and the corresponding notation for  $x_{i,j}$ and %$\xpath$, $\xpath_\pm$,  $X_{i,j}$. %, $\Xpath$, and $\Xpath_\pm$.
%We denote by $\M_1(\X)$ the space of probability measures on a measurable space $(\X,\sF)$.
	
%Before we can state our main theorem we need to introduce some more notation.  
In general  $\wz_{i,j}$   denotes the pair $(\w,z_{i,j})$, but when  
%We use analogous notation for $\wzpath$, and $\wzpath_\pm$. 
  $i$ and $j$ are clear from the context we write simply  $\wz$.
We will also sometimes abbreviate $\wz_-=\wz_{-\infty,0}$. % and $\wz_+=\wz_{1,\infty}$. We do the 
%same for $z_-$, $z_+$, $x_-$, and $x_+$, and the corresponding random variables $Z_-$, $Z_+$, $X_-$, and $X_+$.   {\bf  (**** do we really?? This looks a little dangerous. ****)}  
The spaces to which elements $\wz$ belong are 
 $\bigom_{-}=\Omega\times\range^{\Z_-}$,
$\bigom_+=\Omega\times\range^\N$ and  $\bigom=\Omega\times\range^\Z$.
Their relevant  shift transformations are   
\begin{align*}
	&\Sopl_z:\,\bigom_-\to\bigom_-:\,(\w,z_{-\infty,0})\mapsto(T_z\w,z_{-\infty,0},z),\\
	&\Sopr:\,\bigom_+\to\bigom_+:\,(\w,z_{1,\infty})\mapsto(T_{z_1}\w,z_{2,\infty}),\\
	&\Soplr:\,\bigom\to\bigom:\,(\w,z_{-\infty,\infty})\mapsto(T_{z_1}\w,\zbar_{-\infty,\infty}),
\end{align*}
where $\zbar_i=z_{i+1}$. We use the same symbols $\Sopl_z$, $\Sopr$, and $\Soplr$ to act on $z_{-\infty,0}$, $z_{1,\infty}$, and $z_{-\infty,\infty}$
in the same way.

The empirical process \eqref{empproc} lives in $\bigom_+$ but the rate function
is best defined in terms of backward paths.  Invariance allows us to pass conveniently
between theses settings.  
If   $\mu\in\M_1(\bigom_+)$ is 
$\Sopr$-invariant, it has 
%the conditions
%\[\mubar\{\w\in A,Z_{-\ell,\ell}=z_{-\ell,\ell}\}=\mu\{T_{-x_{-\ell}}\w\in A,Z_{1,2\ell+1}=z_{-\ell,\ell}\}\]
%define, via Kolmogorov's extension theorem, 
a unique $\Soplr$-invariant extension %probability measure
$\mubar$ on $\bigom$. %with marginal $\mu$ on $\Omega\times\range^\N$. 
Let $\mu_-=\mubar_{|\bigom_-}$, the restriction of $\mubar$ to its marginal
on $\bigom_-$.    There is a unique kernel $q_\mu$ on $\bigom_-$ that fixes
$\mu_-$ (that is,  $\mu_-q_\mu=\mu_-$)  and satisfies  
\begin{align}\label{q-support}
q_\mu(\wz_-,\{\Sopl_z\wz_-\,:\,z\in\range\})=1\text{ for $\mu_-$-a.e.\ $\wz_-$.}
\end{align}
Namely 
 	\begin{align*}
	%\label{kernel}
		q_\mu(\wz_-,\Sopl_z\wz_-)=\mubar\{Z_1=z\,|\,(\w,Z_{-\infty,0})=\wz_-\}.  
	\end{align*}
(Uniqueness here is $\mu_-$-a.s.)   %({\bf ***  see proof below??? **)}.  
Indeed, on the one hand, the above $q_\mu$ does leave $\mu_-$ invariant. 
On the other hand, if $q$ is a kernel supported on shifts and leaves $\mu_-$ invariant, 
and if $f$ is a bounded measurable
function on $\bigom_-$, then 
%	\begin{align*}
%		&\int q(\wz_-,\Sopl_z\wz_-)f(\wz_-)\mubar(d\wz)\\
%		&\qquad=\sum_{z'}\int q(\wz_-,\Sopl_{z'}\wz_-)f(T_{z'-z}\w,z_{-\infty,0})\one\{z'=z\}\mu_-(d\wz_-)\\
%		&\qquad=\int f(T_{-z}\w,z_{-\infty,-1})\one\{z_0=z\}\mu_-(d\wz_-)\\
%		&\qquad=\int f(\wz_-)\one\{z_1=z\}\mubar(d\wz)\\
%		&\qquad=\int q_\mu(\wz_-,\Sopl_z\wz_-)f(\wz_-)\mubar(d\wz).
%	\end{align*}
	\begin{align*}
		\int q(\wz_-,\Sopl_z\wz_-)f(\wz_-)\mubar(d\wz)
		&=\sum_{z'}\int q(\wz_-,\Sopl_{z'}\wz_-)f(T_{z'-z}\w,z_{-\infty,0})\one\{z'=z\}\mu_-(d\wz_-)\\
		&=\int f(T_{-z}\w,z_{-\infty,-1})\one\{z_0=z\}\mu_-(d\wz_-)\\
		&=\int f(\wz_-)\one\{z_1=z\}\mubar(d\wz)\\
		&=\int q_\mu(\wz_-,\Sopl_z\wz_-)f(\wz_-)\mubar(d\wz).
	\end{align*}
	
The RWRE transition gives us  the kernel $\pl\in\MC(\bigom_-)$ defined by
	\begin{align*}
		&\pl(\wz_-,\Sopl_z\wz_-)=\pi_{0,z}(\w),\text{ for }\wz_-=(\w,z_{-\infty,0})\in\bigom_-.  %,\\
	\end{align*}
If $q\in\MC(\bigom_-)$  %or some $\bigom_\ell$, 
satisfies 
$\mu_-\times q\ll\mu_-\times \pl$, then $q(\wz_-,\{\Sopl_z\wz_-:z\in\range\})=1$ $\mu_-$-a.s.
and their relative entropy  is given by
%\beq\begin{aligned}
%	&H(\mu_-\times q\,|\,\mu_-\times\pl) \\
%	&\qquad =\int\sum_{z\in\range}q(\wz_-,\Sopl_z\wz_-)\,\log\frac{q(\wz_-,\Sopl_z\wz_-)}{\pl(\wz_-,\Sopl_z\wz_-)}\,\mu_-(d\wz_-).
%\end{aligned}\label{entr1}\eeq
\beq\begin{aligned}
	H(\mu_-\times q\,|\,\mu_-\times\pl)
	=\int\sum_{z\in\range}q(\wz_-,\Sopl_z\wz_-)\,\log\frac{q(\wz_-,\Sopl_z\wz_-)}{\pl(\wz_-,\Sopl_z\wz_-)}\,\mu_-(d\wz_-).
\end{aligned}\label{entr1}\eeq

Let $\mu_0$ denote the marginal of $\mu$ on $\Omega$.
%Let \[\SP_\mathrm{abs}=\{\mu\in\M_1(\bigom_+):\mu\text{ is $S$-invariant and }\mu_0\ll\P\}.\]
Our main theorem is the following.

\begin{theorem}\label{main}   
	Let $(\Omega,\kS,\P,\{T_z\})$ be an ergodic system.
	Assume  \eqref{RWRE-compact},
	 \eqref{RWRE-elliptic}, and   \eqref{RWRE-moment}.
	 Then, for $\P$-a.e.\ $\w$, the large deviation principle holds for the laws $P_0^\w\{R_n^{1,\infty}\in\cdot\}$,
	with rate function  $\Hq:\measures(\bigom_+)\to[0,\infty]$ equal to the lower semicontinuous 
	regularization of the convex function 
		\begin{align}\label{Hdef}
			H(\mu)=
			\begin{cases}
				H(\mu_-\times q_\mu\,|\,\mu_-\times \pl)&\text{if $\mu$ is $\Sopr$-invariant and $\mu_0\ll\P$},\\
				\infty&\text{otherwise.}
			\end{cases}
		\end{align}
\end{theorem}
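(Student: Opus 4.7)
The plan is to prove the upper and lower bounds separately and then identify the rate function. Because $\range$ is finite and $\Omega$ compact, $\bigom_+=\Omega\times\range^\N$ is compact and $\measures(\bigom_+)$ is compact in the weak topology, so exponential tightness is automatic and it will suffice to prove the upper bound on compacts.

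For the upper bound I would follow the homogenization strategy of Kosygina--Rezakhanlou--Varadhan, as adapted to RWRE by Rosenbluth and by Yilmaz, but lifted to the process-level empirical measure. For each bounded continuous $f:\bigom_+\to\R$, set
\[
\Lambda_n^\w(f)=\frac1n\log E_0^\w\Big[\exp\Big(\sum_{k=0}^{n-1}f(T_{X_k}\w,Z_{k+1,\infty})\Big)\Big]
\]
and aim to show that $\Lambda_n^\w(f)\to\Lambda(f)$ exists, is deterministic, and is convex in $f$, for $\P$-a.e.\ $\w$. The key device is to find, for each $f$, a sublinear corrector $F$ on $\bigom_+$ so that the multiplicative cocycle built from $\pr$ and $e^{f-F}$ has controlled logarithmic growth; the $L^p$ moment assumption \eqref{RWRE-moment} with $p>d$ enters here through the shape-theorem / sublinearity lemma applied in $d$ dimensions, and \eqref{RWRE-elliptic} supplies the connectivity needed to transport estimates across the lattice. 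Varadhan's lemma then converts convergence of $\Lambda_n^\w$ to the upper bound with Legendre-dual rate function $I(\mu)=\sup_f\{\mu(f)-\Lambda(f)\}$.

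For the lower bound I would change measure. Fix an $\Sopr$-invariant $\mu\in\measures(\bigom_+)$ with $\mu_0\ll\P$ and $H(\mu_-\times q_\mu\,|\,\mu_-\times\pl)<\infty$, and consider the Markov chain on $\bigom_-$ with kernel $q_\mu$ started from its invariant law $\mu_-$. An ergodic argument (using ergodicity of $(q_\mu,\mu_-)$, which follows from the ergodicity of $\Soplr$ under $\mubar$ and needs a brief verification) gives $R_n^{1,\infty}\to\mu$ a.s.\ under this tilted chain. Writing the Radon--Nikodym derivative of the $q_\mu$-walk against the $\pl$-walk and invoking \eqref{entr1} yields, for every open neighborhood $U\ni\mu$,
\[
\liminf_{n\to\infty}\frac1n\log P_0^\w\{R_n^{1,\infty}\in U\}\;\ge\;-H(\mu_-\times q_\mu\,|\,\mu_-\times\pl),
\]
for $\P$-a.e.\ $\w$. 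The hypothesis $\mu_0\ll\P$ is what lets the tilted walk be realized under a.e.\ quenched environment; the passage from the stationary chain to the walk started at the origin is standard once a shift-invariant a.s.\ limit is in hand.

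Finally I would identify the rate function by matching the Legendre dual $I$ from the upper bound with $H$ from the lower bound. Both are convex and, after lower semicontinuous regularization, unique; agreement on the dense set of $\Sopr$-invariant $\mu$ with $\mu_0\ll\P$ and finite entropy is a process-level Donsker--Varadhan identity: the Legendre transform of the log-moment limit $\Lambda$ equals $H(\mu_-\times q_\mu\,|\,\mu_-\times\pl)$, and is $+\infty$ otherwise. I expect the main obstacle to be the upper bound, specifically the construction and control of the corrector $F$ on the infinite-dimensional space $\bigom_+$ under only the $L^p$ moment condition \eqref{RWRE-moment} rather than uniform ellipticity; extending the scalar-type homogenization argument of Rosenbluth and Yilmaz to a cost function $f$ that depends on the entire future path $z_{1,\infty}$ is exactly the technical jump the authors advertise as their main contribution.
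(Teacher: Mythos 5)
Your high-level picture (compactness gives exponential tightness; prove upper and lower bounds; identify the rate as a relative entropy) is right, and your lower-bound idea of a change of measure via $q_\mu$ matches the paper's strategy in spirit. But the route you propose diverges from the paper's in ways that leave real gaps.

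\textbf{Upper bound.} You propose to prove $\Lambda_n^\w(f)\to\Lambda(f)$ for bounded continuous $f$ directly on the infinite-dimensional space $\bigom_+$ and then invoke Varadhan's lemma. The paper does not do this and explicitly avoids it. Instead it proves, for each finite $\ell$, a \emph{multivariate level~2} LDP for the empirical measures $R_n^{1,\ell}$ on $\bigom_\ell=\Omega\times\range^\ell$, using a modified Donsker--Varadhan functional $\unK_\ell(f)=\inf_{F\in\K^p}K_{\ell,F}(f)$ in which the infimum ranges over a class $\K^p(\bigom_\ell\times\range)$ of corrector functions satisfying closed-loop and mean-zero conditions; the sublinearity Lemma~\ref{CLASS K} (this is where $p>d$ and \eqref{RWRE-elliptic} are actually used) gives the compact-set upper bound, and a K\"onig-type minimax argument (Lemma~\ref{A>K}) identifies $\unK_\ell$ with $H_\ell^*$. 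The passage from fixed $\ell$ to the process level is the Dawson--G\"artner projective limit (Lemma~\ref{transfer of qldp}), followed by a separate identification $\sup_\ell H_\ell^{**}=H^{**}$ (Lemma~\ref{conjugacy}). Your proposed corrector construction directly on $\bigom_+$ with $f$ depending on the entire future path $z_{1,\infty}$ is exactly the difficulty the paper's architecture is designed to sidestep; you flag it as ``the main obstacle'' but give no way past it, and without a concrete replacement for the $\K^p$ sublinearity estimate on the infinite-dimensional space the argument does not close.

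\textbf{Lower bound.} Your claim that ergodicity of the $q_\mu$-chain on $\bigom_-$ ``follows from the ergodicity of $\Soplr$ under $\mubar$'' is not justified: nothing forces $\mubar$ to be ergodic for an arbitrary $\Sopr$-invariant $\mu$ with $\mu_0\ll\P$, and if it is not you cannot conclude $R_n^{1,\infty}\to\mu$ a.s. The paper's Lemma~\ref{ergodicity} establishes the needed ergodicity of the tilted chain from mutual absolute continuity $\mu\sim P_0^{(\ell)}$ and the regularity assumption \eqref{RWRE-regular} --- not from ergodicity of $\mu$ itself --- and this requires the kernel to charge every step $z\in\range$. Since $q_\mu$ need not have this positivity, the paper first proves a weaker lower bound (Lemma~\ref{weak-lower-bound}) under positivity and then removes the restriction by a convex combination with a reference kernel (this also needs \eqref{RWRE-invariant}). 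These steps are nontrivial and not addressed in your sketch.

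\textbf{Identification.} Finally, matching the Legendre dual of your would-be $\Lambda$ with $H^{**}$ is not the same thing as the paper's chain $H_\ell^*=\unK_\ell$ (via minimax and the variational formula for relative entropy), $H_\ell^{**}=\unK_\ell^*$ (via uniqueness of the rate function once both bounds hold), and $\sup_\ell H_\ell^{**}=H^{**}$ (Lemma~\ref{conjugacy}, itself a substantial argument involving tightness of the approximating kernels and lower semicontinuity of entropy). Calling this ``a process-level Donsker--Varadhan identity'' hides the bulk of the work.
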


We make next some observations about the rate function $\Hq$.

\begin{remark}  
As is often the case for process level LDPs,  the rate function is affine. 
This follows because we can replace 
  $q_\mu$ with a  ``universal'' kernel $\quniv$  whose definition is independent of $\mu$.
%(By Lemma 3.4 of \citep{dons-vara-83}.)
 %; i.e.\ $\exists\quniv$ such that for any $\Sopr$-invariant $\mu$ and $z\in\range$
%	\begin{align}
%	\label{kernel}
%		\quniv(\wz_-,\Sopl_z\wz_-)=q_\mu(\wz_-,\Sopl_z\wz_-)\text{ for $\mu_-$-a.e.\ $\wz_-$}.
%	\end{align}
Namely, define
	\[\Uop:\bigom_-\to\bigom_-:(\w,z_{-\infty,0})\mapsto(T_{-z_0}\w,z_{-\infty,-1}).\]
Then, on the event  where $\lim_{n\to\infty}\frac1n\sum_{k=0}^{n-1}\delta_{\Uop{}^k\wz_-}$ exists  define
%the set equals \cup_k\cap_n\cup_{m\ge n}\{\rho(S_n,S_m)\le1/k\}  where S_n is the sum in question. \rho(.,.) is continuous and hence measurable.
	\begin{align}
	\label{kernel}
		\quniv(\wz_-,\Sopl_z\wz_-)=q_\mu(\wz_-,\Sopl_z\wz_-)\text{ for }\mu=\lim_{n\to\infty}\frac1n\sum_{k=0}^{n-1}\delta_{U^k\wz_-}.
	\end{align}
On the complement, set $\quniv(\wz_-,\Sopl_z\wz_-)=\delta_{z_0}(z)$, for some fixed $z_0\in\range$.
\end{remark}
%This kernel $\quniv$ is measurable, since $q_\mu$ is a measurable
%function of the measure $\mu$. 
%%the Borel \si-algebra on \bigom_- is generated by an increasing sequence of finite ones.
%%then mgale cv implies that conditioning on \Om_- is the limit of conditioning on the finite ones.
%%and for those, conditioning is the traditional ratio one and hence is clearly measurable in $\mu$.
%also: \mu = \int \nu_a \alpha(da)  implies  \mubar = \int \nubar_a \alpha(da)
%(since this holds for the marginals of \mu)
%then:
%\int q_\mu(\eta_-,S_z\eta_-) f(\eta_-) \mu(d\eta_-)
%=\int \one\{Z_1=z\} f(\eta_-) \mubar(d\eta)   
%=\iint \one\{Z_1=z\} f(\eta_-) \nubar_a(d\eta) \alpha(da)
%=\iint q_{\nu_a}(\eta_-,S_z\eta_-) f(\eta_-) \nu_a(d\eta_-) \alpha(da)
%=\iint \qbar(\eta_-,S_z\eta_-) f(\eta_-) \nu_a(d\eta_-) \lapha(da)
%=\int \qbar(\eta_-,S_z\eta_-) f(\eta_-) \mu(d\eta_-)
%so \quniv=q_\mu, \mu-a.s., for every \mu that is S-invariant

\begin{remark}
Let us also  recall the convex analytic characterization of  l.s.c.\ regularization. 
 Let $\sC_b(\X)$ denote the space of bounded continuous functions on $\X$.
Given a function $J:\measures(\X)\to[0,\infty]$, let $J^*:\sC_b(\X)\to\R$ be its {\sl convex conjugate} defined by
	\[J^*(f)=\sup_{\mu\in\measures(\X)}\{E^\mu[f]-J(\mu)\}\]
and let $J^{**}:\measures(\X)\to\R$ be its {\sl convex biconjugate} defined by
	\[J^{**}(\mu)=\sup_{f\in\sC_b(\X)}\{E^\mu[f]-J^*(f)\}.\]
If $J$ is convex and not identically infinite, $J^{**}$ is the same as its lower semicontinuous regularization $J_{\rm{lsc}}$;
see Propositions 3.3 and 4.1 of \citep{ekel-tema-cvxanal} or Theorem 5.18 of \citep{rass-sepp-ldp}.
Thus the rate function in Theorem \ref{main} is  $\Hq=H^{**}$. 
\label{lscremark} \end{remark}

%\begin{remark}
%Of course, both $H$ and $\Hq$ can be given in terms of forward paths only; see Lemma \ref{conjugacy}.
%\end{remark}

%\begin{remark}
As expected, rate function $H$ has in fact an alternative representation as a specific relative entropy.
For a probability measure $\nu$ on $\Omega$, define the probability measure $\nu\times P^\mydot_0$ on $\bigom_+$ by 
\[  \int_{\bigom_+}\!\!\!\! f \,d(\nu\times P^\mydot_0) = \int_\Omega 
\Big[\int_{\range^\N} f(\w,z_{1,\infty})\,P^\w_0(dz_{1,\infty} )\Big]\,\nu(d\w).  \]
On any of the product spaces of environments and paths, define the $\sigma$-algebras
$\cG_{m,n}=\sigma\{\w, z_{m,n}\}$.    Let $H_{\cG_{m,n}}(\alpha\,|\,\beta)$ denote 
the relative entropy of the restrictions of the probability measures
$\alpha$ and $\beta$ to the $\sigma$-algebra 
$\cG_{m,n}$.  Let $\Pi$ be the kernel of the environment chain $(T_{X_n}\w)$,
defined as $\Pi f(\w)=E_0^\w[f(T_{X_1}\w)]=\sum_z \pi_{0,z}(\w) f(T_z\w)$.

\begin{lemma}  Let $\mu\in\cM_1(\mathbf{\Omega}_+)$ be $\Sopr$-invariant.    Then the limit
\begin{align}
h(\mu\,|\,\mu_0\times P^\mydot_0) =
 \lim_{n\to\infty}\frac1n  H_{\cG_{1,n}}(\mu\,|\,\mu_0\times P^\mydot_0)
 \label{specentr1}
 \end{align}
exists and equals $H(\mu_-\times q_\mu\,|\,\mu_-\times\pl)$. 
%Furthermore, 
%\[h(\mu\,|\,\mu_0\times P^\mydot_0)=\sup_{n\ge1}\frac1n  H_{\cG_{1,n}}(\mu\,|\,\mu_0\times P^\mydot_0).\] 
%\[ h(\mu\,\vert\,\mu_0\times P^\mydot_0) =  H(\mu_-\times q_\mu\,\vert\,\mu_-\times p_-).  \label{specentr2}\]
%Furthermore, if $\P_\infty\in\M_1(\Omega)$ is any $\Pi$-invariant measure such that $H(\mu_0\,|\,\P_\infty)<\infty$, then one also has
%$H(\mu)=h(\mu\,|\,P_0^\infty),$
%where $P_0^\infty=\P_\infty\times P_0^\mydot$.
 \end{lemma}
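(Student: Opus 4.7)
The plan is to expand $H_{\cG_{1,n}}(\mu\mid\mu_0\times P^\mydot_0)$ into a telescoping sum of per-step conditional entropies via the chain rule for relative entropy, identify each summand as a backward-past conditional entropy under the $\Soplr$-invariant extension $\mubar$ of $\mu$, and then pass to the limit via backward martingale convergence. Concretely, since $\mu$ and $\mu_0\times P^\mydot_0$ share the $\Omega$-marginal $\mu_0$, and the latter is Markov on $\range^\N$ given $\w$ with one-step transition $(p_z(T_{X_{k-1}}\w))_{z\in\range}$, the chain rule of relative entropy yields
\begin{equation*}
 H_{\cG_{1,n}}(\mu\mid \mu_0\times P^\mydot_0)=\sum_{k=1}^n H_k,\qquad H_k:=\int H\bigl(\mu(Z_k\in\cdot\mid\w,Z_{1,k-1})\,\big|\,(p_z(T_{X_{k-1}}\w))_{z\in\range}\bigr)\,d\mu.
\end{equation*}

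I would then pass to $\mubar$ and use $\Soplr$-stationarity: since $\mubar$ is $\Soplr^{k-1}$-invariant, the joint law of $(T_{X_{k-1}}\w, Z_k, Z_{k-1},\dots, Z_1)$ coincides with that of $(\w, Z_1, Z_0,\dots,Z_{-(k-2)})$. Conditioning on $(T_{X_{k-1}}\w, Z_{k-1},\dots,Z_1)$ is the same as conditioning on $(\w, Z_1,\dots,Z_{k-1})$, so
\begin{equation*}
 H_k=\int H\bigl(\mubar(Z_1\in\cdot\mid\w,Z_{-(k-2),0})\,\big|\,(p_z(\w))_{z\in\range}\bigr)\,d\mubar \;=\; -\widetilde h_k - E^\mubar[\log p_{Z_1}(\w)],
\end{equation*}
where $\widetilde h_k$ is the Shannon conditional entropy of $Z_1$ given $(\w,Z_{-(k-2),0})$ under $\mubar$. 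The second term is independent of $k$ by stationarity, while $\widetilde h_k$ is non-increasing in $k$ because refining the conditioning $\sigma$-algebra only decreases conditional Shannon entropy. Hence $H_k$ is non-decreasing.

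For the limit, by backward martingale convergence $\mubar(Z_1=z\mid\w,Z_{-(k-2),0})\to q_\mu(\wz_-,\Sopl_z\wz_-)$ $\mubar$-a.s., and since $\range$ is finite the Shannon entropy is a bounded continuous function of this finite-dimensional conditional, so dominated convergence gives $\widetilde h_k\downarrow-\int\sum_z q_\mu(\wz_-,\Sopl_z\wz_-)\log q_\mu(\wz_-,\Sopl_z\wz_-)\,\mu_-(d\wz_-)$, with $\mu_-=\mubar\vert_{\bigom_-}$. Combining,
\begin{equation*}
 H_k \;\uparrow\; H_\infty:=\int\sum_{z\in\range}q_\mu(\wz_-,\Sopl_z\wz_-)\log\frac{q_\mu(\wz_-,\Sopl_z\wz_-)}{\pl(\wz_-,\Sopl_z\wz_-)}\,\mu_-(d\wz_-)=H(\mu_-\times q_\mu\mid\mu_-\times\pl),
\end{equation*}
and a Ces\`aro argument then gives $\tfrac1n\sum_{k=1}^n H_k\to H_\infty$, which is exactly \eqref{specentr1}.

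The main obstacle I anticipate is the careful bookkeeping in the shift step and the handling of the possibility that $E^\mubar[\log p_{Z_1}(\w)]=-\infty$: in that degenerate case $H_k=+\infty$ identically and the identity is trivial, while otherwise the sequence $H_k$ is bounded and increasing, so the monotone-convergence argument above applies directly. Everything else reduces to standard facts about conditional Shannon entropy rates for stationary processes.
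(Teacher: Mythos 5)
Your proof is correct and follows essentially the same approach as the paper: chain rule decomposition of $H_{\cG_{1,n}}$ into per-step conditional relative entropies, re-expression of each term via $\Sop$-invariance as a backward-conditioned entropy under $\mubar$, monotone convergence of the summands, and a Ces\`aro argument. The paper states the convergence step as the increase of $H_{\cG_{2-i,1}}(\mu_-\times q_\mu\mid\mu_-\times\pl)$ to $H(\mu_-\times q_\mu\mid\mu_-\times\pl)$ along the nested $\sigma$-algebras $\cG_{2-i,1}\nearrow\cG_{-\infty,1}$, which is exactly what your split into Shannon conditional entropy plus the constant $-E^{\mubar}[\log p_{Z_1}(\w)]$ together with backward martingale and dominated convergence re-derives.
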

 
 \begin{proof}
Fix $\mu$.   Let $\mu_i^{\w, z_{1,i-1}}(\cdot)$ denote the conditional distribution 
 of $Z_i$ under $\mu$, given $\cG_{1,i-1}$.  Then by the $\Sop$-invariance, 
\[   {\bar\mu}[ Z_1=u\,\vert\,\cG_{2-i,0}](\w,z_{2-i,0}) =  \mu_i^{T_{x_{1-i}}\w, z_{2-i,0}}(u).  \]
For $i=1$ we must interpret $\cG_{1,0}=\sigma\{\w\}=\mathfrak S$ and 
$(\w, z_{1,0})$  simply as $\w$. 
Observe also that the conditional distribution 
 of $Z_i$ under $\mu_0\times P^\mydot_0$, given $\cG_{1,i-1}$, is  
 $\pi_{0,\centerdot}(T_{x_{i-1}}\w)$.  

  By two applications of  the conditional entropy formula (Lemma 10.3 of \citep{vara-ldp} or Exercise 6.14 of \citep{rass-sepp-ldp}),   
\begin{align}
\begin{split}
&H_{\cG_{1,n}}(\mu\,|\,\mu_0\times P^\mydot_0)
= \sum_{i=1}^n \int H\bigl( \mu_i^{\w, z_{1,i-1}}\,\big| \,\pi_{0,\centerdot}(T_{x_{i-1}}\w)\bigr)\,
\mu(d\w, dz_{1,\infty}) \\
&\quad = 
\sum_{i=1}^n \int H\bigl( \,  {\bar\mu}[ Z_1=\cdot \,|\,\cG_{2-i,0}](T_{x_{i-1}}\w,z_{1,i-1})  
\,\big\vert \,\pi_{0,\centerdot}(T_{x_{i-1}}\w)\bigr)\,
\mu(d\w, dz_{1,\infty}) \\
&\quad = 
\sum_{i=1}^n \int H\bigl( \,  {\bar\mu}[ Z_1=\cdot \,|\,\cG_{2-i,0}](\w,z_{2-i,0})\,\big| \,\pi_{0,\centerdot}(\w)\bigr)\,
\mu_-(d\w, dz_{-\infty, 0}) \\
&\quad = \sum_{i=1}^n   H_{\cG_{2-i,1}}(\mu_-\times q_\mu\,|\, \mu_-\times \pl). 
\end{split}\label{entraux1}
\end{align}
As $k\to\infty$,  the $\sigma$-algebras $\cG_{-k,1}$ generate the 
$\sigma$-algebra $\cG_{-\infty,1}=\sigma\{\w, z_{-\infty, 1}\}$, and consequently 
\begin{align}
H_{\cG_{2-i,1}}(\mu_-\times q_\mu\,|\, \mu_-\times \pl) 
\nearrow   H(\mu_-\times q_\mu\,|\, \mu_-\times \pl)
\qquad\text{as $i\nearrow\infty$}.
\label{increasing entropy}
\end{align} 

We have taken some liberties with notation and regarded  $\mu_-\times q_\mu$ and  $\mu_-\times \pl$ as measures on the variables $(\w, z_{-\infty, 1})$, instead of on pairs
$((\w, z_{-\infty, 0}), (\w', z_{-\infty, 0}'))$.   This is legitimate because the simple 
structure of
the kernels $q_\mu$ and $\pl$, namely \eqref{q-support}
 implies that $z_{-\infty, 0}'=z_{-\infty, 1}$ and  $\w'=T_{z_1}\w$ almost surely under these
 measures. 

%Equations \eqref{specentr1} and \eqref{specentr2} 
The claim follows by dividing through \eqref{entraux1}
by $n$ and letting $n\to\infty$.  
%To see the last claim of the lemma, use again the conditional entropy formula to write
% \[ \frac1n H_{\cG_{1,n}}(\mu\,\vert\, P^\infty_0)  =  \frac1n H(\mu_0\,\vert\, \P_\infty) 
% +  \frac1n H_{\cG_{1,n}}(\mu\,\vert\,\mu_0\times P^\mydot_0) \]
%and let $n\to\infty$.   
 \end{proof}

Note that the specific entropy in \eqref{specentr1} is not an entropy between two
$\Sopr$-invariant measures unless $\mu_0$ is $\Pi$-invariant.
The next lemma exploits the previous one to say something about the zeros of $\Hq$.

\begin{lemma}  If $\Hq(\mu)=0$ then  
$\mu(d\w, dz_{1,\infty})=\mu_0(d\w)P^\w_0(dz_{1,\infty})$ for some $\Pi$-invariant $\mu_0$.
\end{lemma}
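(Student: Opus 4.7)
The plan is to combine the preceding lemma with the weak lower semicontinuity of relative entropy. Since $\Omega$ is compact and $\range$ is finite, $\measures(\bigom_+)$ is compact metrizable in the weak topology, so because $\Hq=H^{**}$ is the lsc regularization of the convex function $H$, the hypothesis $\Hq(\mu)=0$ furnishes a sequence $\mu^{(k)}\to\mu$ weakly with $H(\mu^{(k)})\to 0$. Each $\mu^{(k)}$ is $\Sopr$-invariant with $\mu^{(k)}_0\ll\P$, and continuity of $\Sopr$ transfers $\Sopr$-invariance to the limit $\mu$.

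Next I would use the telescoping identity \eqref{entraux1} from the preceding lemma to bound, for each fixed $n$,
\[
H_{\cG_{1,n}}(\mu^{(k)}\,|\,\mu^{(k)}_0\times P^\mydot_0)\leq n\,H(\mu^{(k)})\longrightarrow 0\quad\text{as }k\to\infty,
\]
using that the Cesaro average of the increasing sequence $H_{\cG_{2-i,1}}(\mu^{(k)}_-\times q_{\mu^{(k)}}\,|\,\mu^{(k)}_-\times\pl)$ is bounded by its limit $H(\mu^{(k)})$. Continuity of $\w\mapsto\pi_{0,z}(\w)$ together with finiteness of $\range$ gives weak continuity of $\w\mapsto P^\w_0$, hence $\mu^{(k)}_0\times P^\mydot_0\to\mu_0\times P^\mydot_0$ weakly. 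The joint weak lsc of relative entropy, applied on the $\sigma$-algebra $\cG_{1,n}$ (which is generated by $\w$ and finitely many discrete coordinates), then yields
\[
H_{\cG_{1,n}}(\mu\,|\,\mu_0\times P^\mydot_0)\leq\liminf_{k\to\infty}H_{\cG_{1,n}}(\mu^{(k)}\,|\,\mu^{(k)}_0\times P^\mydot_0)=0
\]
for every $n$. Consequently $\mu$ and $\mu_0\times P^\mydot_0$ agree on each $\cG_{1,n}$, and since these $\sigma$-algebras generate the Borel $\sigma$-algebra of $\bigom_+$, I conclude $\mu=\mu_0\times P^\mydot_0$.

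The $\Pi$-invariance of $\mu_0$ then follows by testing $\Sopr$-invariance of $\mu$ against arbitrary $f\in\sC_b(\Omega)$:
\[
E^{\mu_0}[f]=E^\mu[f(\w)]=E^\mu[f(T_{z_1}\w)]=E^{\mu_0}[\Pi f],
\]
which gives $\mu_0\Pi=\mu_0$. The main obstacle, and the reason for routing through the preceding lemma rather than working directly from the definition of $H$, is that $\Hq(\mu)=0$ does not immediately imply $H(\mu)=0$; we only know that $\mu$ lies in the lsc envelope of the zero set of $H$. The telescoping identity \eqref{entraux1} is precisely what converts this weak information into control on every finite-dimensional marginal, and the lsc of relative entropy then passes cleanly to the limit.
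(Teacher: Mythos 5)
Your proof is correct. Both you and the paper pass to an approximating sequence $\mu^{(k)}\to\mu$ with $H(\mu^{(k)})\to0$ and invoke lower semicontinuity of relative entropy, but you apply it in a different place. The paper applies lsc to the single infinite-dimensional entropy $H\bigl(\mu^{(k)}_-\times q_{\mu^{(k)}}\,\big|\,\mu^{(k)}_-\times\pl\bigr)$ (identified with the restriction of $\bar\mu^{(k)}$ to $(\w,z_{-\infty,1})$), concluding first that $\mu_-$ is $\pl$-invariant, and from that deduces both $\Pi$-invariance of $\mu_0$ and $\mu=\mu_0\times P_0^\mydot$. You instead apply lsc to the finite-dimensional marginals on $\cG_{1,n}$, using the telescoping identity \eqref{entraux1} and monotonicity \eqref{increasing entropy} to get the bound $H_{\cG_{1,n}}(\mu^{(k)}\,|\,\mu_0^{(k)}\times P_0^\mydot)\le n\,H(\mu^{(k)})\to0$, and conclude $\mu=\mu_0\times P_0^\mydot$ directly by $\pi$-$\lambda$; $\Pi$-invariance then drops out by testing $\Sopr$-invariance. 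Your route is arguably cleaner because the lsc argument lives on the compact finite-dimensional spaces $\Omega\times\range^n$ and avoids appealing to the identification of the backward kernel; the paper's route is shorter once that identification is accepted and gives $\Pi$-invariance of $\mu_0$ en passant. The paper itself notes parenthetically that the last step ``can also be seen from \eqref{entraux1} and \eqref{increasing entropy},'' so your argument is a full development of that hint into the entire proof.
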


Note that it is not necessarily true that $\mu_0\ll\P$ in the above lemma.

 \begin{remark}
 One can show that under \eqref{RWRE-compact} and \eqref{RWRE-elliptic} there is at most one $\P_\infty\in\M_1(\Omega)$ that is $\Pi$-invariant 
 and such that $\P_\infty\ll\P$; see for example \citep{rass-03}. In fact, in this case $\P_\infty\sim\P$.
 The above lemma shows that the zeros of $\Hq$ consist of $P_0^\infty=\P_\infty\times P_0^\mydot$ (if $\P_\infty\ll\P$ exists) and possibly measures of the
 form $\mu_0\times P_0^\mydot$, with $\mu_0$ being $\Pi$-invariant but such that $\mu_0\not\ll\P$. 
% {\bf ** anything else?? e.g. in non-nestling there are no such $\mu_0$'s that give 0 rate? **}
% Note that for such a $\mu_0$, 
% $E^\mu[Z_1]=0$. {\bf ** can we prove this last statement?? **} This is consistent with the fact that if $\P_\infty\ll\P$ exists, then 
% $P_0\{X_n\cdot\uhat\to\infty\}\in\{0,1\}$ for all $\ell\in\R^d$ (the so-called {\sl zero-one law}) {\bf ** need to hint at the simple proof?? *}
% and the zero set of the level 1 rate function consists of either a singleton or a line segment $[0,1]v$ for some $v\in\R^d$. {\bf ** this is true, right?? **}
 \end{remark}
 
\begin{proof}
  There is a sequence of 
$\Sopr$-invariant probability measures $\mu^{(m)}\to\mu$ such that 
$H(\mu^{(m)})\to 0$ and $\mu^{(m)}_0\ll\P$. 
(If $\mu_0\ll\P$ then we can take  $\mu^{(m)}=\mu$.)    Let $\mu^{(m)}_1$ denote the
marginal distribution on $(\w,z_{-\infty, 1})$ which can be identified with 
$\mu^{(m)}_-\times q_{\mu^{(m)}}$ and converges to the corresponding marginal $\mu_1$.   
 By the continuity of
the kernel $\pi_{0,z}(\w)$,  $\mu^{(m)}_-\times \pl\to\mu_-\times \pl$. 
From these limits and the lower semicontinuity of relative entropy, 
\[
H(\mu_1\,\vert\, \mu_-\times \pl)=\lim_{m\to\infty} H(\mu^{(m)}_1\,\vert\, \mu^{(m)}_-\times \pl)
%=\lim_{m\to\infty} H_{\text{quen}}(\mu^{(m)})
= 0.   \]
This tells us that $\mu_-$ is $\pl$-invariant, which in turn implies that $\mu_0$ is $\Pi$-invariant,
%\int f(T_z\w)\pi_{0,z}(\w)\mu_0(d\w)=\int \fbar(\Sopl_z\wz_-)\pl(\wz_-,\Sopl_z\wz_-)\mu_-(d\wz_-)=\int\fbar d\mu_-=\int f d\mu_0
%and  the conclusion follows  from part (b) of Prop.~\ref{invpr2} and Prop.~\ref{invpr1}.  
and  together with the $\Sopr$-invariance of $\mu$ implies also that $\mu=\mu_0\times P^\mydot_0$.  (The last point can also be seen 
from \eqref{entraux1} and \eqref{increasing entropy}.)
\end{proof}
%\end{remark}

We close this section with some examples. 

Let $\Omega=\cP^{\Z^d}$ with $\cP=\{(p_z)_{z\in\range}\in(0,1)^\range:\sum_z p_z=1\}$.
Let $\nu\in\bigom_+$ be the law of a classical random walk; i.e.\ $\nu=\nu_0\times P^\mydot_0$ with $\nu_0=\delta_\alpha^{\otimes\Z^d}$,
for  some $\alpha\in\cP$. Then $H(\nu_-\times q_\nu\,|\,\nu_-\times\pl)=0$. 
%and $H(\nu)=\infty$ since $\P$ is ergodic and thus $\nu_0\not\ll\P$.
%entropy equals the average under \nu of 
%\sum_z \alpha_z \log \alpha_z/\pi_{0,z}(\w)
%but under \nu  \pi is \alpha! 
However, if  $\sum_z z \alpha_z$ is not in the set $\cN=\{E^\mu[Z_1]:\Hq(\mu)=0\}$, then, $\Hq(\nu)>0$. Note that by 
the contraction principle, $\cN$ is the zero set of the level-1 rate function. Hence if $\P$ is product, by \citep{vara-03} $\cN$ consists
of a singleton or a line segment. 
Thus we can pick $\alpha$ so that the mean $\sum z\alpha_z$
does not lie in $\cN$, and consequently we have measures $\nu$ for which
$\Hq(\nu)>0=H(\nu_-\times q_\nu\,|\,\nu_-\times\pl)$.
That is, the rate $\Hq$ does not have to pick up the entropy value.

Lower semicontinuity of relative entropy implies $\Hq(\mu)=H(\mu)$ when $\mu_0\ll\P$. 
This equality can still happen when $\mu_0\not\ll\P$; i.e.\ the 
l.s.c.\ regularization can bring the rate $\Hq$ down from infinity all the way to the entropy. Here is a somewhat singular example. 
Assume $\P\{\pi_{0,0}(\w)>0\}=1$ and  let $\zeta=(0,0,0,\dotsc)$ be the constant sequence
of $0$-steps in $\Z^d$.   For each $\ombar\in\Omega$ define the 
(trivially $\Sopr$-invariant)  probability measure
$\nu^\ombar=\delta_{(\ombar,\zeta)}$ on $\bigom_+$.   Then, for all
$\ombar$ in the (minimal closed) support of $\P$,
\begin{align}  
\Hq(\nu^\ombar) =H(\nu^{\ombar}_-\times q_{\nu^{\ombar}_-} \,\vert\,\nu^{\ombar}_-\times \pl) =-\log \pi_{0,0}(\ombar). 
 \label{exline2}
 \end{align}
The case $ \pi_{0,0}(\ombar)=0$ is allowed here, which can of course happen if uniform
ellipticity is not assumed.  
 
 The second equality in \eqref{exline2} is clear from definitions, because the kernel is trivial:  $q_{\nu^{\ombar}_-}(\wz_-, \wz_-)=1$. 
Since $\Hq(\nu^\ombar)$ 
is defined by l.s.c.\ regularization and 
 entropy itself is l.s.c.,  entropy always gives a lower bound for $\Hq$.  
 If $\P\{\ombar\}>0$ then $\nu^\ombar_0=\delta_\ombar\ll\P$
and   the first equality in \eqref{exline2} is true by definition.  
If $\P\{\ombar\}=0$ pick a sequence of open neighborhoods $G_j\searrow\ombar$.
The assumption 
that $\ombar$ lies in the support of $\P$ implies $\P(G_j)>0$.  Define a sequence
of approximating measures by 
$\mu^j=\frac1{\P(G_j)} \int_{G_j} \nu^\w \,\P(d\w)$ 
with entropies 
\[   H(\mu^j\times q_{\mu^j}\, \vert\, \mu^j\times \pl) 
= - \frac1{\P(G_j)} \int_{G_j} \log \pi_{0,0}(\w)  \,\P(d\w). \]
The above entropies converge 
to $-\log \pi_{0,0}(\ombar)$ by continuity of $\pi_{0,0}(\cdot)$.  We have 
verified \eqref{exline2}. 

%By the contraction principle (Theorem 4.2.1 \citep{demb-zeit-ldp} or page 25 of \citep{rass-sepp-ldp}) 
%this gives an entropy characterization to the rate function in Varadhan's quenched large deviation principle for the velocity.  

%\begin{corollary}
%%	Let $\P$ be ergodic for the shifts $\{T_z\}$. Assume $\range$ is finite and that the regularity assumption \eqref{RWRE-regular}, 
%%	the moment assumption \eqref{RWRE-moment}, and the ellipticity assumption \eqref{RWRE-elliptic} hold. 
%	Same assumptions as in Theorem \ref{main}. 
%%	Assume also $\range$ is finite.
%	For $\P$-a.e.\ $\w$, a large deviation principle holds for the laws $P_0^\w\{X_n/n\in\cdot\}$,
%	with tight rate function $I(\xi)=\inf\{\Hq(\mu):E^\mu[Z_1]=\xi\}$.
%\end{corollary}
%{\bf **** do we keep this?? ****} 
%{\bf *** any nontrivial examples??  relationship with some other entropy?? ****} 

%the empirical measures $R_n^{1,0}$ and $R_n^{1,1}$, in the context of random walk in random environment.
%, respectively, for uniformly elliptic random walk in random environment
%with $\range=\{z:|z|\le B\}$ for $B<\infty$ (which, incidentally, excludes interesting cases like the space-time random environment, i.e.\ when $\P\{\pi_{0,z}=0\}=1$ if
%$z\cdot e_1\ne1$). 
%{\bf ** and our contribution is? **}

\section{Multivariate  level 2 and setting the stage for the proofs}\label{prelim}

The assumptions made for the main result are the union of all the assumptions used
in this paper.   To facilitate future work, we next list the different assumption that
are needed for different parts of the proof.  

The lower bounds in Theorem \ref{main} above and Theorem \ref{MC-qldp-th} below do not require $\Omega$  compact nor
$\range$ finite. They hold under the assumption
that $\P$ is ergodic for $\{T_z:z\in\range\}$ and the following two conditions are satisfied.
	\beq  \text{ $\P\{\pi_{0,z}>0\}\in\{0,1\}$ for all $z\in\Z^d$. } \label{RWRE-regular}\eeq
	\begin{align}\label{RWRE-invariant}
	\begin{split}
	%\exists\rrange\subset\range\text{ that generates $\Z^d$ as an additive group and}\\
	&\text{Either $\E[|\log\pi_{0,z}|]<\infty$ holds for all $z\in\rrange$ or there exists a}\\ 
	&\text{probability measure $\P_\infty$ on $(\Omega,\kS)$ with $\P_\infty\Pi=\P_\infty$ and $\P_\infty\ll\P$.}
	\end{split}
	\end{align}
Note that \eqref{RWRE-regular} is
a regularity condition that says that either all environments allow the move or all prohibit it. 
%One important model that does not satisfy this assumption is that of a random walk on a percolation cluster. 

Our proof of the upper bound uses stricter assumptions. The upper bound
holds if $\P$ is ergodic for $\{T_z:z\in\range\}$, $\range$ is finite, $\Omega$ is compact,
the moment assumption \eqref{RWRE-moment} holds, and 
	\begin{align}\label{RWRE-elliptic2}
%	\begin{split}
	\text{$ \forall x\in\range$,  $ \exists m\in\N$, $\exists z_1,\dotsc,z_m\in\rrange$ such that 
	$x+z_1+\cdots+z_m=0$.}
%	\end{split}
	\end{align}
On its own, \eqref{RWRE-elliptic2} is weaker than \eqref{RWRE-elliptic}. However, since the additive group generated by $\range$
is isomorphic to $\Z^{d'}$ for some $d'\le d$, we always assume, without any loss of generality, that
	\begin{align}\label{RWRE-span}
	\text{$\Z^d$ is the smallest additive group containing $\range$.}
	\end{align}
Then, under \eqref{RWRE-span}, \eqref{RWRE-elliptic2} is equivalent to \eqref{RWRE-elliptic}.

%Note that when \eqref{RWRE-span} holds but \eqref{RWRE-elliptic} fails to hold, there must exist a $\zhat\in\range$ and a vector
%$\uhat\in\R^d$ such that $\uhat\cdot\zhat>0$ and for all other $z\in\range$, $\uhat\cdot z\ge0$. This is 
%the so-called {\sl forbidden direction} case and should in principle be easier to treat. We leave this case
%to a future paper. {\bf ** prove the claim (at least for ourselves)!! **}

The only place where the condition $p>d$ (in \eqref{RWRE-moment}) is needed is for Lemma \ref{CLASS K} to hold. See
Remark \ref{explanation}.
The only place where \eqref{RWRE-elliptic} (or \eqref{RWRE-elliptic2}) is needed is in the proof of \eqref{unif-Lp} in Lemma \ref{F-lemma}.
This is  the only reason that our result does not cover the so-called {\sl forbidden direction} case. A particularly interesting special case is the 
space-time, or dynamic, environment; i.e.\ when $\range\subset\{z:z\cdot e_1=1\}$. 
A level 1 quenched LDP can be proved for space-time RWRE through the subadditive ergodic
theorem, as was done for elliptic walks in \citep{vara-03}.   %The level 3 quenched LDP for space-time RWRE is still open.
Yilmaz \citep{yilm-09-a}  has shown that for i.i.d.\ space-time RWRE in 
4 and higher dimensions  the quenched
and averaged level 1 rate functions coincide in a neighborhood of the limit velocity. 
In contrast with large
deviations,  the functional central limit theorem of i.i.d.\ space-time RWRE is completely understood; see \citep{rass-sepp-05},
and also \citep{bold-minl-pell-04} for a different proof for steps that have exponential tails. 
  
 Next we turn to the strategy of the proof of Theorem \ref{main}.  
The   process level LDP comes by the familiar projective limit
argument
%Theorem \ref{main} would follow from Dawson-G\"artner's projective limit theorem (see Theorem 4.6.1 in \citep{demb-zeit-ldp})
 from large deviation theory.  The intermediate steps are   
multivariate quenched level 2  LDPs.   
For each $\ell\in\N$ 
define  the multivariate empirical measure 
	\[R_n^{1,\ell}=n^{-1}\sum_{k=0}^{n-1}\delta_{T_{X_k}\w,Z_{k+1,k+\ell}}.\]
This empirical measure lives on 
 the space 
 $\bigom_\ell=\Omega\times\range^\ell$  whose generic element is now denoted by 
$\wz=(\w,z_{1,\ell})$.   
%Our objective is thus to find, for each fixed $\ell\ge1$, a convex lower semicontinuous rate function 
%$\Hq^{(\ell)}:\measures(\bigom_\ell)\to[0,\infty]$
%that governs the  large deviation principle for the sequence $P_0^\w\circ(R_n^{1,\ell})^{-1}$,
%for $\P$-almost-every $\w$.

 We can treat $R_n^{1,\ell}$   as the position  level (level 2) empirical measure 
  of a Feller-continuous Markov chain.  
Denote by $P_\wz$  (with expectation $E_\wz$)  the law of the Markov chain $(\wz_k)_{k\ge0}$ on
$\bigom_\ell $ 
with initial state $\wz$ and transition kernel
	\begin{align*}
		&\pr(\wz,\Sopr_z\wz)=\pi_{x_\ell,x_\ell+z}(\w)=\pi_{0,z}(T_{x_\ell}\w),\text{ for }\wz=(\w,z_{1,\ell})\in\bigom_\ell,
%		&p^{\pm}(\wz,S^\pm_z\wz)=\pi_{0,z}(\w),\text{ for }\wz=\w\in\bigom_0=\Omega.
	\end{align*}
where
	\[\Sopr_z:\,\bigom_\ell\to\bigom_\ell:\,(\w,z_{1,\ell})\mapsto(T_{z_1}\w,z_{2,\ell},z).\]
  This Markov chain has 
empirical measure
	\[L_n=n^{-1}\sum_{k=0}^{n-1}\delta_{\wz_k}\]
	that satisfies the following LDP.   Define an entropy $H_\ell$ on $\measures(\bigom_\ell)$
	by
	\begin{align}\label{Helldef}
		H_\ell(\mu)=
			\begin{cases}
				\inf\{H(\mu\times q\,|\,\mu\times \pr):q\in\MC(\bigom_\ell)\text{ with }\mu q=\mu\}&\text{if }\mu_0\ll\P,\\
				\infty&\text{otherwise.}
			\end{cases}
	\end{align}
$H_\ell$ is convex by an argument used below at the end
of Section \ref{lower-qldp-section}. Recall Remark \ref{lscremark} about l.s.c.\ regularization.  
 
\begin{theorem}
\label{MC-qldp-th}
	Same assumptions as in Theorem \ref{main}. For any fixed $\ell\ge1$,
	for $\P$-a.e.\ $\w$, and for all $z_{1,\ell}\in\range^\ell$,
	the large deviation principle holds for the sequence of probability measures
	$P_\wz\{L_n\in\cdot\}$ on $\measures(\bigom_\ell)$ with convex rate function $H_\ell^{**}$.
	\end{theorem}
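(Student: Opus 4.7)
The plan is to follow the Donsker--Varadhan strategy for the empirical measure of a Feller-continuous Markov chain, specialized to the quenched RWRE setting. The upper bound will come from a homogenization argument that produces a deterministic limiting log-moment generating function, the lower bound from a change-of-measure argument with near-optimal kernels in the definition \eqref{Helldef}, and the two halves will be matched by convex duality so as to produce the common rate function $H_\ell^{**}$.

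For the upper bound, I would fix $f \in \sC_b(\bigom_\ell)$ and study the quenched log-moment generating function
\[
\Lambda_n^\w(\wz; f) \;=\; n^{-1}\log E_\wz\Big[\exp\sum_{k=0}^{n-1} f(\wz_k)\Big].
\]
The key step is to show that $\Lambda_n^\w(\wz; f)$ converges $\P$-a.s.\ to a deterministic limit $\Lambda(f)$ independent of $\wz$. Adapting the homogenization argument of \citep{kosy-reza-vara-06} (as used at level $1$ by Rosenbluth and Yilmaz) to the multivariate state space $\bigom_\ell$, I would construct a measurable corrector $F : \bigom_\ell \to \R$ satisfying a Hamilton--Jacobi cohomological inequality of the form
\[
e^{f(\wz)} \sum_{z\in\range} \pi_{0,z}(T_{x_\ell}\w)\, e^{F(\Sopr_z \wz) - F(\wz)} \;\le\; e^{\Lambda(f)},
\]
obtained as a limit of finite-volume minimizers, with matching equality averaged against the appropriate measures. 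The reachability condition \eqref{RWRE-elliptic} and the moment bound \eqref{RWRE-moment} with $p>d$ enter through a Garsia-type estimate that makes the cocycle increments $L^p$-integrable and the limiting corrector $F$ $\P$-a.s.\ finite. Given $\Lambda$, exponential Chebyshev together with a compact-covering argument yields an LDP upper bound with convex, l.s.c.\ rate $\Lambda^*$.

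For the lower bound, I would fix $\mu \in \measures(\bigom_\ell)$ with $H_\ell(\mu) < \infty$ and, for given $\epsilon > 0$, pick $q \in \MC(\bigom_\ell)$ with $\mu q = \mu$ and $H(\mu \times q \,\vert\, \mu \times \pr) < H_\ell(\mu) + \epsilon$. The auxiliary Markov chain with kernel $q$ has $\mu$ as an invariant measure, and on its ergodic components Birkhoff's theorem gives $L_n \to \mu$; because $\mu_0 \ll \P$ and $\pr$ is Feller, this transfers to a quenched statement valid for $\P$-a.e.\ $\w$ and $\mu$-a.e.\ starting state $\wz$. A standard Jensen change of measure from $q^{\otimes n}$ to $(\pr)^{\otimes n}$ then produces
\[
\varliminf_{n\to\infty} n^{-1}\log P_\wz\{L_n \in G\} \;\ge\; -H(\mu \times q \,\vert\, \mu \times \pr)
\]
for every weak neighborhood $G$ of $\mu$. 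Optimizing over $q$ yields the LDP lower bound with rate $H_\ell$, which passes to $H_\ell^{**}$ automatically. Combining the two bounds at the level of $\Lambda(f)$ shows $\Lambda(f) = H_\ell^*(f)$, so $\Lambda^* = H_\ell^{**}$ by the biconjugation formalism recalled in Remark \ref{lscremark}; a short additional argument using \eqref{RWRE-elliptic} extends the quenched lower bound from $\mu$-almost every starting state to every $\wz \in \bigom_\ell$. The principal obstacle will be the homogenization step on $\bigom_\ell$: the construction of the corrector and the $L^p$-estimates for cocycle increments must accommodate the extra finite look-back of steps in the state variable, which is precisely the multivariate extension the authors highlight as the main technical contribution.
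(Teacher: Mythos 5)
Your overall architecture matches the paper's: change of measure plus ergodicity of the tilted chain for the lower bound, KRV-type homogenization for the upper bound, and convex duality to match. The lower-bound sketch reproduces the content of Lemmas \ref{ergodicity} and \ref{weak-lower-bound} together with the convexity step that removes the positivity restriction on $q$, and that part is sound.

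The gap is in the form of the corrector. You posit a single function $F:\bigom_\ell\to\R$ whose coboundary increments $F(\Sopr_z\wz)-F(\wz)$ enter the Hamilton--Jacobi inequality. This is exactly the ansatz the paper explains cannot be made to close: matching the infimum over bounded continuous coboundaries $\inf_{h\in\sC_b}K_{\ell,h}$ with $H_\ell^*$ by a minimax argument would require $\{\mu:\mu_0\ll\P\}$ to be compact, which it is not unless $\P$ has finite support. The paper's fix (discussion preceding Definition \ref{cK-def}) is to enlarge the admissible correctors to cocycles $F(\wz,z)$ on $\bigom_\ell\times\range$ in the class $\K^p$, because the weak $L^p$ limits of the approximating increments $h_k(\Sopr_z\wz)-h_k(\wz)$ generally lose coboundary form: there need be no $\Phi\in L^p(\P)$ with $F(\wz,z)=\Phi(\Sopr_z\wz)-\Phi(\wz)$. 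The potential defined pathwise via the closed-loop property only has sublinear growth along paths, which is what the GRR estimate (Lemma \ref{CLASS K}) delivers and is where the hypotheses $p>d$ and \eqref{RWRE-elliptic} are actually used; it does not give integrability of a corrector function. Your "limit of finite-volume minimizers" must therefore be replaced by the paper's actual mechanism (Lemmas \ref{A>K} and \ref{F-lemma}): a K\"onig minimax over the compact sets $\M_1^k$ of measures whose $\Omega$-marginal density is $\kS_k$-measurable, an infinite-dimensional Jensen step, a uniform $L^p(\P)$ bound on $F_{k,\e}(\wz,z)=\E[h_{k,\e}(\Sopr_z\wz)-h_{k,\e}(\wz)\mid\kS_{k-1}]$, and a weak $L^p$ subsequential limit landing in $\K^p$. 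Once the cocycle class is in place, the upper bound holds with rate $\unK_\ell^*$ (Lemma \ref{lm-new-upper}), and the identification $H_\ell^{**}=\unK_\ell^*$ comes from the lower bound and uniqueness of l.s.c.\ rate functions, not from a direct a.s.\ convergence of $\Lambda_n^\w(\wz;f)$ to a deterministic limit, which the paper never establishes and for which there is no obvious subadditivity at level $\ell\ge 1$.
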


%\begin{remark}
%	Once again, compactness of $\Omega$, \eqref{RWRE-elliptic}, and the full strength of \eqref{RWRE-moment} are not needed for the lower bound.
%	The lower bound still holds if \eqref{RWRE-moment} and \eqref{RWRE-elliptic} are replaced by the weaker \eqref{RWRE-invariant}.
%	{\bf *** perhaps remove this remark?? ***}  
%\end{remark}

The lower bound in Theorem \ref{MC-qldp-th} follows from a  change of measure and 
the ergodic theorem,  and hints at   the correct rate function.  
  Donsker and Varadhan's \citep{dons-vara-75-a} general Markov chain
   argument gives the upper bound 
%$H(\mu_-\times\quniv\,|\,\mu_-\times\pl)$ 
but without the absolute continuity restriction
in \eqref{Helldef}.
Thus the main issue is to deal with the case when the rate is infinite. 
This is  nontrivial because  the set of measures with $\mu_0\not\ll\P$ is dense in the set of
probability measures with the same support as $\P$. 
This is where the homogenization argument 
from \citep{kosy-reza-vara-06}, \citep{rose-thesis} and \citep{yilm-09-b}  comes in. 
 
	We conclude this section with a lemma that contains the 
  projective limit step. 

\begin{lemma}
\label{transfer of qldp}
	Assume $\P\in\M_1(\Omega)$ is invariant for the shifts $\{T_z:{z\in\range}\}$ and satisfies the regularity
	assumption \eqref{RWRE-regular}.
	Assume that for each fixed $\ell\ge1$ there exists a rate function
	$I_\ell:\measures(\bigom_\ell)\to[0,\infty]$
	that governs the  large deviation lower bound 
	for the laws $P_\wz\{L_n\in\cdot\}$, for $\P$-almost-every $\w$ and all $z_{1,\ell}\in\bigom_\ell$.
	Then, for $\P$-a.e.\ $\w$, 
	the large deviation lower bound holds for $P_0^\w\{R_n^{1,\infty}\in\cdot\}$ with rate function
		$I(\mu)=\sup_{\ell\ge1}I_\ell(\mu_{|\bigom_\ell})$, for $\mu\in\M_1(\bigom_+).$
		 
	When $\range$ is finite and $\Omega$ is compact the same statement holds for the upper bound 
	and the large deviation principle.  %{\bf **** finite range makes compact?? ***}
\end{lemma}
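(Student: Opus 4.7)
The strategy is the standard projective-limit argument, after identifying the empirical measure of the $\bigom_\ell$-valued chain with the walk's empirical measure $R_n^{1,\ell}$.

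\textbf{Step 1 (Chain vs.\ walk).} Under $P_0^\w$, the $\bigom_\ell$-valued process $\wz_k:=(T_{X_k}\w,Z_{k+1,k+\ell})$ is Markov with kernel $\pr$, and conditional on $Z_{1,\ell}=z_{1,\ell}$ is distributed exactly as the chain under $P_{(\w,z_{1,\ell})}$; its empirical measure is precisely $R_n^{1,\ell}$. Hence
\[P_0^\w\{R_n^{1,\ell}\in A\}=\sum_{z_{1,\ell}\in\range^\ell}P_0^\w\{Z_{1,\ell}=z_{1,\ell}\}\,P_{(\w,z_{1,\ell})}\{L_n\in A\}.\]
For the lower bound I retain a single admissible summand: the regularity hypothesis \eqref{RWRE-regular} together with shift-invariance of $\P$ yields, for $\P$-a.e.\ $\w$, at least one $z_{1,\ell}$ with $P_0^\w\{Z_{1,\ell}=z_{1,\ell}\}$ a positive $n$-independent constant that disappears after taking $n^{-1}\log$. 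In the second part of the lemma, finiteness of $\range^\ell$ absorbs the sum by taking the max. Either way the hypothesis transfers to $P_0^\w\{R_n^{1,\ell}\in\cdot\}$ with the same rate $I_\ell$.

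\textbf{Step 2 (Lower bound at level $\infty$).} The restriction $\pi_\ell\colon\bigom_+\to\bigom_\ell$ is continuous and pathwise $\pi_\ell R_n^{1,\infty}=R_n^{1,\ell}$, so for any weak-open $V\subset\measures(\bigom_\ell)$ one has $P_0^\w\{R_n^{1,\infty}\in\pi_\ell^{-1}V\}=P_0^\w\{R_n^{1,\ell}\in V\}$. Given a weak-open $O\ni\mu$ with $I(\mu)<\infty$, I would locate $\ell$ and a weak-open $V\ni\mu_{|\bigom_\ell}$ with $\pi_\ell^{-1}V\subset O$: on the Polish space $\bigom_+$ the weak topology on $\measures(\bigom_+)$ is generated by bounded Lipschitz test functions (Dudley), and a Lipschitz function against the product metric $d_\Omega+\sum_k 2^{-k}\one\{z_k\ne z_k'\}$ is uniformly $2^{-\ell}$-approximable by a cylinder Lipschitz function, so cylinder-open sets form a neighborhood base at $\mu$. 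Combining this with Step 1,
\[\liminf_{n\to\infty}n^{-1}\log P_0^\w\{R_n^{1,\infty}\in O\}\ge-I_\ell(\mu_{|\bigom_\ell})\ge-I(\mu).\]

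\textbf{Step 3 (LDP in the compact case and main obstacle).} When $\range$ is finite and $\Omega$ is compact, $\bigom_+$ is compact metric; Stone--Weierstrass makes cylinder continuous functions uniformly dense in $C(\bigom_+)$, so the weak topology on $\measures(\bigom_+)$ coincides with the projective-limit topology from the $\pi_\ell$. The Dawson--Gärtner theorem then promotes the family of level-$\ell$ LDPs to a full LDP for $P_0^\w\{R_n^{1,\infty}\in\cdot\}$ with rate $I(\mu)=\sup_\ell I_\ell(\mu_{|\bigom_\ell})$, supplying the upper bound and hence the LDP. The principal technical point is the neighborhood-base claim in Step 2 \emph{without} compactness, since in general the weak topology is strictly finer than the projective-limit topology; the Dudley/cylinder-Lipschitz reduction is what circumvents this obstacle.
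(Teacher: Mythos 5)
Your proof is correct and follows the same two-step strategy as the paper: first transfer the level-$\ell$ bounds from the chain measures $P_\wz\{L_n\in\cdot\}$ to $P_0^\w\{R_n^{1,\ell}\in\cdot\}$ by conditioning on $Z_{1,\ell}$ (exactly the paper's computation, using \eqref{RWRE-regular} for positivity of the conditioning event and finiteness of $\range^\ell$ to absorb the sum for the upper bound), and then pass from finite $\ell$ to level $\infty$ by a projective-limit argument. The only difference is that the paper disposes of the second step by citing the Dawson--G\"artner theorem outright, whereas you carry out the lower-bound half explicitly by checking, via Dudley's bounded-Lipschitz metric and uniform cylinder approximation, that weakly open sets in $\measures(\bigom_+)$ contain cylinder-open sets; this is a legitimate (and slightly more careful) filling-in of a point that the citation glosses over, namely that the weak topology on $\measures(\bigom_+)$ coincides with the projective-limit topology so that the Dawson--G\"artner conclusion applies in the topology actually used.
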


\begin{proof}
	Observe first that $P_\wz$ is the law of $(T_{X_{k}}\w,Z_{k+1,k+\ell})_{k\ge 0}$ under 
	$P_0^\w$, conditioned on $Z_{1,\ell}=z_{1,\ell}$. Since $P_0^\w\{Z_{1,\ell}=z_{1,\ell}\}>0$ $\P$-a.s.\ we have
	for all open sets $O\subset\M_1(\bigom_\ell)$,
%		\begin{align*}
%			&\varliminf_{n\to\infty}n^{-1}\log P_0^\w\{R_n^{1,\ell}\in O\}\\
%%			&\qquad\ge\varliminf_{n\to\infty}\sum_{z_{1,\ell}\in\range^\ell} P_0^\w\{Z_{1,\ell}=z_{1,\ell}\}\,
%%				n^{-1}\log P_0^\w\{R_n^{1,\ell}\in O\,|\,Z_{1,\ell}=z_{1,\ell}\}\\
%%			&\qquad\ge\sum_{z_{1,\ell}\in\range^\ell} P_0^\w\{Z_{1,\ell}=z_{1,\ell}\}\,\varliminf_{n\to\infty}
%%				n^{-1}\log P_0^\w\{R_n^{1,\ell}\in O\,|\,Z_{1,\ell}=z_{1,\ell}\}\\
%			&\qquad\ge\varliminf_{n\to\infty}
%				n^{-1}\log\Big[P_0^\w\{Z_{1,\ell}=z_{1,\ell}\}P_0^\w\{R_n^{1,\ell}\in O\,|\,Z_{1,\ell}=z_{1,\ell}\}\Big]\\
%			&\qquad=\varliminf_{n\to\infty}
%				n^{-1}\log P_0^\w\{R_n^{1,\ell}\in O\,|\,Z_{1,\ell}=z_{1,\ell}\}
%			\ge-\inf_O I_\ell.
%		\end{align*}
		\begin{align*}
			\varliminf_{n\to\infty}n^{-1}\log P_0^\w\{R_n^{1,\ell}\in O\}
%			&\qquad\ge\varliminf_{n\to\infty}\sum_{z_{1,\ell}\in\range^\ell} P_0^\w\{Z_{1,\ell}=z_{1,\ell}\}\,
%				n^{-1}\log P_0^\w\{R_n^{1,\ell}\in O\,|\,Z_{1,\ell}=z_{1,\ell}\}\\
%			&\qquad\ge\sum_{z_{1,\ell}\in\range^\ell} P_0^\w\{Z_{1,\ell}=z_{1,\ell}\}\,\varliminf_{n\to\infty}
%				n^{-1}\log P_0^\w\{R_n^{1,\ell}\in O\,|\,Z_{1,\ell}=z_{1,\ell}\}\\
			&\ge\varliminf_{n\to\infty}
				n^{-1}\log\Big[P_0^\w\{Z_{1,\ell}=z_{1,\ell}\}P_0^\w\{R_n^{1,\ell}\in O\,|\,Z_{1,\ell}=z_{1,\ell}\}\Big]\\
			&=\varliminf_{n\to\infty}
				n^{-1}\log P_0^\w\{R_n^{1,\ell}\in O\,|\,Z_{1,\ell}=z_{1,\ell}\}
			\ge-\inf_O I_\ell.
		\end{align*}
	
	Similarly, in the case of the upper bound, and when $\range$ is finite, 
	we have for all closed sets $C\subset\M_1(\bigom_\ell)$, 
%		\begin{align*}
%			&\varlimsup_{n\to\infty}n^{-1}\log P_0^\w\{R_n^{1,\ell}\in C\}\\
%			&\qquad\le\varlimsup_{n\to\infty}\max_{z_{1,\ell}\in\range^\ell} n^{-1}\log
%				P_0^\w\{R_n^{1,\ell}\in C\,|\,Z_{1,\ell}=z_{1,\ell}\}\\
%			&\qquad\le\max_{z_{1,\ell}\in\range^\ell} \varlimsup_{n\to\infty}
%				n^{-1}\log P_0^\w\{R_n^{1,\ell}\in C\,|\,Z_{1,\ell}=z_{1,\ell}\}\\
%%			&\qquad\le\varlimsup_{n\to\infty}
%%				n^{-1}\log(|\range_L^\ell|+1)\max\Big(\max_{z_{1-\ell,0}\in\range^\ell_L}P_0^\w\{R_n^{1,\ell}\in C\,|\,Z_{1,\ell}=z_{1-\ell,0}\},
%%				P_0^\w\{Z_{1,\ell}\notin \range_L^\ell\}\Big)\\
%%			&\qquad=\varlimsup_{n\to\infty}
%%				n^{-1}\log P_0^\w\{R_n^{1,\ell}\in C\,|\,Z_{1,\ell}=z_{1-\ell,0}\}\\
%			&\qquad\le-\inf_C I_\ell.
%		\end{align*}	
		\begin{align*}
			\varlimsup_{n\to\infty}n^{-1}\log P_0^\w\{R_n^{1,\ell}\in C\}
			&\le\varlimsup_{n\to\infty}\max_{z_{1,\ell}\in\range^\ell} n^{-1}\log
				P_0^\w\{R_n^{1,\ell}\in C\,|\,Z_{1,\ell}=z_{1,\ell}\}\\
			&\le\max_{z_{1,\ell}\in\range^\ell} \varlimsup_{n\to\infty}
				n^{-1}\log P_0^\w\{R_n^{1,\ell}\in C\,|\,Z_{1,\ell}=z_{1,\ell}\}\\
%			&\qquad\le\varlimsup_{n\to\infty}
%				n^{-1}\log(|\range_L^\ell|+1)\max\Big(\max_{z_{1-\ell,0}\in\range^\ell_L}P_0^\w\{R_n^{1,\ell}\in C\,|\,Z_{1,\ell}=z_{1-\ell,0}\},
%				P_0^\w\{Z_{1,\ell}\notin \range_L^\ell\}\Big)\\
%			&\qquad=\varlimsup_{n\to\infty}
%				n^{-1}\log P_0^\w\{R_n^{1,\ell}\in C\,|\,Z_{1,\ell}=z_{1-\ell,0}\}\\
			&\le-\inf_C I_\ell.
		\end{align*}	

	We conclude that conditioning is immaterial and, $\P$-a.s., the laws of $R_n^{1,\ell}$ induced by $P_0^\w$ satisfy a large deviation lower
	(resp.\ upper) bound governed by $I_\ell$. 	
	The lemma now follows from the Dawson-G\"artner projective limit theorem 
	(see Theorem 4.6.1 in \citep{demb-zeit-ldp}). 
	% Lower bound is general. Upper bound needs lsc and tightness
\end{proof}

The next two sections prove
 Theorem \ref{MC-qldp-th}:    lower bound  in Section \ref{lower-qldp-section} and 
  upper bound   in Section \ref{upper-qldp-section}. 
   Section \ref{proof-section}   finishes the proof of 
the main theorem \ref{main}.

\section{Lower bound}
\label{lower-qldp-section}
We now prove the large deviation lower bound in Theorem \ref{MC-qldp-th}.
This section is valid for a general $\range$ that can be infinite and a general Polish $\Omega$.
Lemmas \ref{ergodicity} and \ref{weak-lower-bound} are valid under \eqref{RWRE-regular} only while the lower 
bound proof also requires \eqref{RWRE-invariant}.
Recall that assumption \eqref{RWRE-span} entails no loss of generality.

We start with some ergodicity properties of the measures involved in the definition of
the  function $H_\ell$. Recall that $\bigom_\ell=\Omega\times\range^\ell$ and that for a measure
$\mu\in\measures(\bigom_\ell)$, $\mu_0$ is its marginal on $\Omega$. Denote by $P_0^{(\ell)}$ the law of 
$(\w,Z_{1,\ell})$ under $P_0$.

\begin{lemma}
\label{ergodicity}
	Let $(\Omega,\kS,\P,\{T_z\})$ be ergodic and assume  \eqref{RWRE-regular} and \eqref{RWRE-span} hold.
	Fix $\ell\ge1$ and 
	let $\mu\in\measures(\bigom_\ell)$ be such that $\mu\ll P_0^{(\ell)}$.
	Let $q$ be a Markov transition kernel on $\bigom_\ell$ such that 
		\begin{itemize}
			\item[{\rm(a)}] $\mu$ is $q$-invariant {\rm(}i.e.\ $\mu q=\mu${\rm)};
			\item[{\rm(b)}] $q(\wz,\Sopr_z\wz)>0$ for all $z\in\range$ %, all $z_{1,\ell}\in\range^\ell$, and $\P$-a.e.\ $\w\in\Omega$.
					and $\mu$-a.e.\ $\wz\in\bigom_\ell$;
			\item[{\rm(c)}] $\sum_{z\in\range}q(\wz,\Sopr_z\wz)=1$, for $\mu$-a.e.\ $\wz\in\bigom_\ell$.
		\end{itemize}
	Then, $\mu\sim P_0^{(\ell)}$ and
	the Markov chain $(\wz_k)_{k\ge0}$ on $\bigom_\ell$ with kernel $q$ and 
	initial distribution $\mu$ is ergodic. In particular, we have for all $F\in L^1(\mu)$
	\begin{align}\label{as-erg-thm}
	\lim_{n\to\infty} n^{-1}\sum_{k=0}^{n-1} E^{Q_\wz}[F(\wz_k)]=E^\mu[F],\ \text{for $\mu$-a.e.}\ \wz.
	\end{align}
	Here, $Q_\wz$ is the Markov chain with transition kernel $q$ and initial state $\wz$.
\end{lemma}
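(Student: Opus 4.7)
The plan is to prove the lemma in three stages: (a) establish $\mu\sim P_0^{(\ell)}$; (b) show that the stationary chain $(\wz_k)$ is ergodic; and (c) deduce \eqref{as-erg-thm} from Birkhoff's ergodic theorem.

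For (a), observe first that by \eqref{RWRE-regular} one has $\pi_{0,z}(\w)>0$ $\P$-a.s.\ for each $z\in\range$, so $P_0^{(\ell)}$ already has full conditional support on $\range^\ell$; the task therefore reduces to proving $\mu_0\sim\P$ together with the full conditional support of the window distribution under $\mu$. The inclusion $\mu_0\ll\P$ is immediate; set $\psi=d\mu_0/d\P$ and $B=\{\psi>0\}$. Testing $\mu q=\mu$ against functions of $\w$ alone, and using that the $\Omega$-coordinate of $\Sopr_z\wz$ equals $T_{z_1}\w$ independently of the newly appended step $z$, the $\{T_z\}$-invariance of $\P$ gives the functional identity
\begin{equation*}
\psi(\w)=\sum_{z\in\range}\alpha_z(T_{-z}\w)\,\psi(T_{-z}\w)\quad\P\text{-a.s.},
\end{equation*}
with $\alpha_z(\w)=\mu(z_1=z\,|\,\w)$. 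After $\ell$ iterations of $q$ from a $\mu$-typical state, the entire window has been renewed by $q$-sampled steps, which by iterating condition (b) have positive conditional probability on every $v\in\range^\ell$; substituting this renewed identity and reading off the reverse-time conditional kernel from the two-sided $\Soplr$-invariant extension $\mubar$ on $\bigom$, one concludes that $B$ is invariant mod $\P$-null sets under $T_z$ for every $z$ in the group generated by $\range$, which equals $\Z^d$ by \eqref{RWRE-span}. The $\{T_z\}$-ergodicity of $\P$ forces $\P(B)\in\{0,1\}$, and $\mu_0(B)=1$ gives $\P(B)=1$, so $\mu_0\sim\P$; combined with the conditional full support just obtained, this yields $\mu\sim P_0^{(\ell)}$.

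For (b), let $A$ be a $q$-invariant event, so that $q(\wz,A)=1_A(\wz)$ $\mu$-a.s. Conditions (b) and (c) force the equivalence $\wz\in A\iff\Sopr_z\wz\in A$ for every $z\in\range$, $\mu$-a.s. Iterating $\ell$ times shows that $A$ has the form $\{(\w,v):T_{v_1+\cdots+v_\ell}\w\in\tilde A\}$, where $\tilde A\subset\Omega$ is invariant under the semigroup generated by $\range$; the reverse-time kernel read off $\mubar$ promotes this to full $\{T_z:z\in\Z^d\}$-invariance modulo $\P$-null sets, and $\{T_z\}$-ergodicity of $\P$ then gives $\mu(A)\in\{0,1\}$. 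For (c), Birkhoff's ergodic theorem applied to the ergodic stationary chain gives $n^{-1}\sum_{k=0}^{n-1}F(\wz_k)\to E^\mu F$, $P_\mu$-a.s., where $P_\mu=\int P_{\wz}\,\mu(d\wz)$; conditioning on $\wz_0=\wz$ and applying dominated convergence converts this into the stated almost-sure convergence of $n^{-1}\sum_k E^{Q_\wz}[F(\wz_k)]$ to $E^\mu F$ for $\mu$-a.e.\ $\wz$.

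The main obstacle is upgrading the one-sided inclusion $T_z(B\cap\{\alpha_z>0\})\subset B$ that comes directly from the identity to the full $\Z^d$-invariance of $B$ (and analogously for $\tilde A$): the weights $\alpha_z$ need not be positive everywhere on $B$, so a naive application of the identity only yields invariance under a subsemigroup. The fix is to iterate $q$ long enough that the window becomes entirely $q$-sampled, so (b) makes every direction accessible, and simultaneously to exploit the reverse-time structure of $\mubar$ to extract backward invariance. Together these convert semigroup inclusion into genuine two-sided group invariance, at which point \eqref{RWRE-span} and the $\{T_z\}$-ergodicity of $\P$ close the argument.
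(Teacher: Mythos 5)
Your argument in part (a) takes a genuinely different route from the paper's: you push the problem down to the $\Omega$-marginal density $\psi=d\mu_0/d\P$ and the marginal leading-step kernel $\alpha_z(\w)=\mu(z_1=z\,|\,\w)$, whereas the paper works directly with the full density $f=d\mu/dP_0^{(\ell)}$ and the kernel $q$ on $\bigom_\ell$. Working at the full-density level, condition (b) immediately yields $\one\{f(\wz)>0\}\le\one\{f(\Sopr_z\wz)>0\}$ without any extra positivity argument; your route forces you to first establish $\alpha_z>0$ $\mu_0$-a.e., which you correctly obtain by iterating $q$ for $\ell$ steps to renew the window. Your proposed fix of the ``one-sided inclusion'' problem via ``the reverse-time conditional kernel read off $\mubar$'' is unnecessary and not clearly correct (positivity of the reverse kernel is not automatic). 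In your marginal formulation the fix is much simpler: once $T_z B\subset B$ mod $\P$-null for every $z\in\range$, the $T_z$-invariance of $\P$ gives $\P(T_zB)=\P(B)$ and hence $T_zB=B$ mod null, so $B$ is invariant under the full group $\Z^d$ by \eqref{RWRE-span} and ergodicity finishes it. The paper, because it compares indicators $\one\{f(\w,z_{1,\ell})>0\}$ and $\one\{f(T_{x_\ell+z}\w,\ztil_{1,\ell})>0\}$ carrying \emph{different} window arguments, cannot use this inclusion-equals-equality trick and instead averages over the $\Z_+^M$-orbit and invokes the multidimensional ergodic theorem. Your part (b) is essentially the paper's argument for the triviality of $\cI_{\mu,q}$.

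The genuine gap is in part (c). ``Birkhoff plus dominated convergence'' does not deliver \eqref{as-erg-thm} for a general $F\in L^1(\mu)$. Birkhoff gives $A_n:=n^{-1}\sum_{k<n}F(\wz_k)\to E^\mu[F]$ $P_\mu$-a.s., but to conclude $E^{Q_\wz}[A_n]\to E^\mu[F]$ a.s.\ you need to pass the limit through a conditional expectation, and there is no dominating function available when $F$ is unbounded. What one gets cheaply is $L^1(P_\mu)$-convergence of $A_n$ (the $A_n$ are uniformly integrable as Cesàro averages of an identically distributed $L^1$ sequence), hence $L^1$-convergence of $E[A_n\,|\,\wz_0]$, hence a.s.\ convergence along a subsequence only. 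Statement \eqref{as-erg-thm} asserts convergence of the full sequence $n^{-1}\sum_{k<n}q^kF(\wz)$, which is the content of the Markov-chain ergodic theorem and rests on a maximal inequality rather than on Birkhoff followed by conditioning. The paper correctly cites such a theorem (Corollary 2 of Section IV.2 in \citep{rose}); your proof should do likewise rather than appeal to dominated convergence.
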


\begin{proof}
	First, let us prove mutual absolute continuity. Let $f=\frac{d\mu}{dP_0^{(\ell)}}$. Then, by
	assumptions (a) and (c),
%		\begin{align*}
%			0&=\int\one\{f=0\}f\,dP_0^{(\ell)}=\int\one\{f=0\}\,d\mu\\
%			&=\sum_{z\in\range}\int q(\wz,\Sopr_z\wz)\one\{f(\Sopr_z\wz)=0\}\,\mu(d\wz).
%		\end{align*}
		\begin{align*}
			0=\int\one\{f=0\}f\,dP_0^{(\ell)}=\int\one\{f=0\}\,d\mu
			=\sum_{z\in\range}\int q(\wz,\Sopr_z\wz)\one\{f(\Sopr_z\wz)=0\}\,\mu(d\wz).
		\end{align*}
	By assumption (b), this implies that  for $z\in\range$
		\begin{align*}
			0&=\int \one\{f(\Sopr_z\wz)=0\}\,\mu(d\wz)
			=\int \one\{f(\Sopr_z\wz)=0\}f(\wz)\,P_0^{(\ell)}(d\wz).
		\end{align*}
	By regularity \eqref{RWRE-regular} we conclude that
	$\one\{f(\wz)>0\}\le\one\{f(\Sopr_z\wz)>0\}$, for all $z\in\range$, $z_{1,\ell}\in\range^\ell$, and $\P$-a.e.\ $\w$.

	By first following the path $z_{1,\ell}$, then taking an increment of $z\in\range$, then following a path 
	$\ztil_{1,\ell}\in\range^\ell$, one sees that 
	for all $z_{1,\ell},\ztil_{1,\ell}\in\range^\ell$, all $z\in\range$, and $\P$-a.e.\ $\w$,
			\begin{align}\label{ergodicity-argument}
			\one\{f(\w,z_{1,\ell})>0\}\le\one\{f(T_{x_\ell+z}\w,\ztil_{1,\ell})>0\}.
			\end{align}
	
	Now pick a finite subset $\zhat_1,\dotsc,\zhat_M \in \range$ that generates $\Z^d$ as an additive group;
	e.g.\ take the elements needed for generating the canonical basis $e_1,\dotsc,e_d$.
	Note that $M>d$ can happen; e.g.\ take $d=1$ and $\range=\{2,5\}$.
	
	Applying \eqref{ergodicity-argument} repeatedly, one  can arrange for $z$ to be any point of the form
	$\sum_{i=1}^M k_i \zhat_i$ with  $k_i\in\Z_+$.  
	Furthermore, the ergodicity of $\P$ under shifts $\{T_z\}$ implies its ergodicity under shifts $\{T_{\zhat_1},\dotsc,T_{\zhat_M}\}$,
	since the latter generate the former. 
%	if A is invariant  under (T_{z_1},...,T_{z_M}), then it is also
%	invariant under all T_{e_j}.
	We can thus average over $k=(k_1,...,k_M)\in [0,n]^M$, take $n\to\infty$, and
	invoke the multidimensional ergodic theorem (see for example Appendix 14.A of \citep{geor}).
%	Georgii states for a group of measurable, measure preserving transformations
%	(\theta_i)_{i\in\Z^d}  s.t.  \theta_i\circ\theta_j=\theta_{i+j}.   So we can apply his
%	theorem directly to the group  \theta_i = T_{\sum_k i_k z_k}  with averaging over
%	an arb. sequence of cubes whose volume tends to \infty.  
	This shows that 
	for all $z_{1,\ell},\ztil_{1,\ell}\in\range^\ell$ and $\P$-a.e.\ $\w$
			\[\one\{f(\w,z_{1,\ell})>0\}\le \P\{\w:f(\w,\ztil_{1,\ell})>0\}.\]
%	Note here that the ergodic averages in \citep{geor} are over increasing $V_n\subset\Z^d$ that can be taken to be $[0,n]^d$. 
%	In our case, we needed the same type of boxes but in a basis that is not the canonical one. However, $\Z^d$ with basis 
%	$\{z_1,\dotsc,z_d\}\subset\range$ is homomorphic as an additive group to $\Z^d$ with the canonical basis.
%	
%	let \{z_1,\dotsc,z_d\} be a subset of \range that forms a linearly independent family.
%
%	let F be a group homomorphism between \Z^d with base \{z_1,\dtos,z_d\}
%	and \Z^d with the canonical base:
%	x=\sum_{i=1}^d a_i z_i \mapsto F(x)=\sum_{i=1}^d a_i e_i.
%
%	note that F is 1:1 and F(x+y)=F(x)+F(y).
%
%	consider the transformation on \Omega:
%	\w\mapsto f(\w):f(\w)_{x,y}=\w_{F(x),F(y)}
%	
%	let \P be shift invariant and ergodic for the usual shifts on \Z^d.
%	let \Ptil be the measure induced by the transformation f.
%	then, \Ptil is still ergodic for the shifts:
%      if A=T_z A  then F^{-1}(A)=T_{F^{-1}(z)}F^{-1}(A) 
%      so F^{-1}(A) has \P measure 0/1 and so A has \Ptil measure 0/1
%
%	thus, \Ptil-a.s.
%	   |V_n|^{-1} \sum_{x\in V_n} g(T_x\w} \to \Etil[g]
%	where V_n=[0,n]^d.
%
%	going back to \P  V_n's are now parallelograms inside the positive cone
%	and the same limit holds.
%
%      the lemma says that the "new" V_n's (=F^{-1}([0,n]^d)$ are positive combinations of elements of \range.	
%	
	Since $f$ integrates to 1 there exists a $z_{1,\ell}\in\range^\ell$ with $\P\{f(\w,z_{1,\ell})>0\}>0$. This implies that 
	$\P\{f(\w,\ztil_{1,\ell})>0\}=1$ for all $\ztil_{1,\ell}\in\range^\ell$ and hence $\mu\sim P_0^{(\ell)}$.
 
	Next, we address the ergodicity issue. 
	By Corollary 2 of Section IV.2 of \citep{rose}, we have that for any $F\in L^1(\mu)$ and $\mu$-a.e.\ $\wz\in\bigom_\ell$,
		\[\lim_{n\to\infty}\frac1n\sum_{k=0}^{n-1}E^{Q_\wz}[F(\wz_k)]=E^\mu[F\,|\,\cI_{\mu,q}].\]
	Here, $\cI_{\mu,q}$ is the $\sigma$-algebra of {\sl $q$-invariant sets}:
		\[\Big\{A\text{ measurable}:\int q(\wz,A)\one_{A^c}(\wz)\,\mu(d\wz)=\int q(\wz,A^c)\one_A(\wz)\,\mu(d\wz)=0\Big\}.\]
	Ergodicity would thus follow from showing that $\cI_{\mu,q}$ is $\mu$-trivial. To this end, let $A$ be $\cI_{\mu,q}$-measurable.
	By assumptions (b) and (c) and mutual absolute continuity we have that for all $z\in\range$
		\[\int\one_A(\Sopr_z\wz)\one_{A^c}(\wz)P_0^{(\ell)}(d\wz)=0.\]
	Replacing the set $\{f>0\}$ by $A^c$, in the above proof of mutual absolute continuity, one concludes that $P_0^{(\ell)}(A)\in\{0,1\}$.
	The same holds under $\mu$ and the lemma is proved.
%	This also gives the uniqueness of $\mu$ since we have for any bounded measurable
%	function
%	\[\lim_{n\to\infty}n^{-1}\sum_{k=0}^{n-1}\Ewz_{\wz}^q[f(\proc_k)]=\int f\, d\mu,\ \Pl\text{-a.s.}\qedhere\]
\end{proof}

We are now ready to derive the lower bound.
We first prove a slightly weaker version. 
%Recall that for $\wz\in\bigom_\ell$, $P_\wz$ is the law of the Markov chain $(\wz_k)$ with state space $\bigom_\ell$, initial 
%state $\wz$, and transition kernel $\pr$. 

\begin{lemma}\label{weak-lower-bound}
	Let $(\Omega,\kS,\P,\{T_z\})$ be ergodic and assume \eqref{RWRE-regular} and \eqref{RWRE-span} hold.
	Fix $\ell\ge1$.
	Then, for $\P$-a.e.\ $\w$, for all $z_{1,\ell}\in\range^\ell$, and for any
	open set $O\subset\measures(\bigom_\ell)$
%	\begin{align*}
%		&\varliminf_{n\to\infty} n^{-1}\log P_\wz\{L_n\in O\}\\
%		&\quad\qquad\ge
%		-\inf\Big\{\sup_{\ell\ge1}H(\mu^{(\ell)}\times q^{(\ell)}\,|\,\mu^{(\ell)}\times p):
%		\mu\in O\text{ is $U$-invariant}:\\
%		&\qquad\qquad\qquad\qquad\qquad\qquad\qquad
%		\mu_0\ll\P,\ q(\wz,\{\Sop_z\wz:z\in\range\})=1\ \mu\text{-a.s.},\\
%		&\qquad\qquad\qquad\qquad\qquad\qquad\qquad
%		\text{and }\forall z\in\range,\ell\ge1,\,q^{(\ell)}(\zeta,\Sop_z\zeta)>0,\,\mu\text{-a.s.}\Big\}.
%	\end{align*}
	\begin{align*}
		&\varliminf_{n\to\infty} n^{-1}\log P_\wz\{L_n\in O\}\\
		&\quad\ge
		-\inf\Big\{H(\mu\times q\,|\,\mu\times\pr):
		\mu\in O,\ \mu_0\ll\P,\ q\in\MC(\bigom_\ell),\ \mu q=\mu,\\
		&\qquad\qquad\qquad\qquad\qquad\qquad\qquad\ 
		\text{ and }\forall z\in\range,\,q(\wz,\Sopr_z\wz)>0,\,\mu\text{-a.s.}\Big\}.
	\end{align*}
\end{lemma}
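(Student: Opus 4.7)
The plan is to prove the bound for each admissible pair $(\mu,q)$ separately (with $\mu\in O$, $\mu_0\ll\P$, $\mu q=\mu$, and $q(\wz,\Sopr_z\wz)>0$ for all $z\in\range$ $\mu$-a.s.) by a change of measure from $P_\wz$ to the Markov chain $Q_\wz$ with kernel $q$, and then take the infimum. I may assume $H(\mu\times q\,|\,\mu\times\pr)<\infty$, since otherwise the bound is vacuous.

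Lemma~\ref{ergodicity} supplies the two ingredients I need: the chain $Q_\wz$ is ergodic with invariant measure $\mu$, and $\mu\sim P_0^{(\ell)}$. Applying the pointwise ergodic theorem for ergodic Markov chains yields, for $\mu$-a.e.\ starting $\wz$, two $Q_\wz$-a.s.\ convergences:
\[
L_n\to\mu\quad\text{weakly,}\qquad
\frac{1}{n}\sum_{k=0}^{n-1}\log\frac{q(\wz_k,\wz_{k+1})}{\pr(\wz_k,\wz_{k+1})}\longrightarrow H(\mu\times q\,|\,\mu\times\pr).
\]
The second limit uses $\log(q/\pr)\in L^1(\mu\times q)$, which is exactly the finite-entropy assumption.

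The change of measure argument then reads
\[
P_\wz\{L_n\in O\}=E^{Q_\wz}\!\Big[\prod_{k=0}^{n-1}\frac{\pr(\wz_k,\wz_{k+1})}{q(\wz_k,\wz_{k+1})}\,\one\{L_n\in O\}\Big]
\]
(the ratio is well-defined because $q>0$ wherever $\pr>0$, by assumption (b)). For each $\e>0$ I would restrict the expectation to the event
\[
A_n(\e)=\Big\{\sum_{k=0}^{n-1}\log\tfrac{q(\wz_k,\wz_{k+1})}{\pr(\wz_k,\wz_{k+1})}\le n(1+\e)H(\mu\times q\,|\,\mu\times\pr)\Big\}
\]
on which the integrand is at least $e^{-n(1+\e)H(\mu\times q|\mu\times\pr)}$, giving
\[
\frac{1}{n}\log P_\wz\{L_n\in O\}\ge -(1+\e)H(\mu\times q\,|\,\mu\times\pr)+\frac{1}{n}\log Q_\wz\bigl(\{L_n\in O\}\cap A_n(\e)\bigr).
\]
The two ergodic convergences force $Q_\wz(\{L_n\in O\}\cap A_n(\e))\to1$, so the last term's $\varliminf$ is $0$; sending $\e\to0$ and taking the infimum over admissible $(\mu,q)$ gives the claim.

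Finally, to have a single $\P$-null set work for every $z_{1,\ell}\in\range^\ell$ and every admissible pair simultaneously, I would parametrize by a countable dense family $(\mu_j,q_j)$, and handle the starting-state issue by using that the state $\wz_k$ of the chain depends only on the last $\ell$ increments and the current environment: after $\ell$ steps all trace of the initial $z_{1,\ell}$ is lost, so conditioning on a specific first $\ell$-step segment steering the walk into a $\mu$-typical configuration costs only $O(1)$ in log probability, which is absorbed by the $\varliminf$. The main obstacle I anticipate is precisely this uniformity-in-starting-state manoeuvre, since the pointwise ergodic theorem only gives a $\mu$-a.e.\ conclusion and $\mu$-a.e.\ $\wz$ need not cover every $(\w,z_{1,\ell})$ in the desired $\P$-full set; once that is in hand the change-of-measure argument is routine.
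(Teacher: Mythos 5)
Your proof is essentially the paper's argument: a change of measure from $P_\wz$ to the Markov chain $Q_\wz$ with kernel $q$, then appeal to Lemma~\ref{ergodicity} to get convergence of the empirical measure and of the log-likelihood sums, and finally take an infimum over admissible $(\mu,q)$.

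The one place where you make life harder than necessary is the uniformity-in-starting-state issue at the end. You correctly observe that the ergodic theorem gives a $\mu$-a.e.\ conclusion, and you propose to propagate it to every initial $z_{1,\ell}$ by a ``steer into a $\mu$-typical configuration costs $O(1)$'' argument. But Lemma~\ref{ergodicity} already asserts $\mu\sim P_0^{(\ell)}$, so a $\mu$-a.e.\ statement is automatically a $P_0^{(\ell)}$-a.e.\ statement, and by regularity~\eqref{RWRE-regular} a $P_0^{(\ell)}$-a.e.\ statement is one that holds for $\P$-a.e.\ $\w$ and \emph{all} $z_{1,\ell}\in\range^\ell$. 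No extra maneuver with the initial segment is needed; this is how the paper closes the argument in one line. (You still need the standard countable-base / minimizing-sequence trick to get a single $\P$-null set for the infimum over $(\mu,q)$ and over open $O$, but that part of your plan is fine.) A second, minor difference: instead of restricting to the good event $A_n(\e)$ and using $Q_\wz$-a.s.\ convergence of the log-likelihood Cesàro sum, the paper applies Jensen's inequality to $\log x$, which reduces the required input from the ergodic theorem to the expectation-level statement~\eqref{as-erg-thm}. Both routes work; your claim that $\log(q/\pr)\in L^1(\mu\times q)$ under finite entropy is in fact correct (both the positive and negative parts are integrable, the latter because $\phi^-(x)=(x\log x)^-\le e^{-1}$), so your restriction argument goes through.
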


\begin{proof}
	Fix $\mu\in O$ and $q$ as in the above display. %uncomment the next line if you're going back to restrictions of H!
	We can also assume that $H(\mu\times q\,|\,\mu\times \pr)<\infty$. Then $q(\wz,\{\Sopr_z\wz:z\in\range\})=1\ \mu$-a.s.
	We can find a weak neighborhood such that $\mu\in B\subset O$. That is, we can find $\e>0$, a positive integer $m$,
	and bounded continuous functions  $f_k:\bigom_\ell\to\R$, 
	%measurable with respect to the $\sigma$-algebra generated by $\w$ and $Z_{1,\ell}$, 
	such that
		\[B=\{\nu\in\measures(\bigom_\ell):\forall k\le m,\,|E^\nu[f_k]-E^\mu[f_k]|<\e\}.\]
	Let $\sF_n$ be the $\si$-algebra generated by $\wz_0,\dotsc,\wz_n$. Recall that $Q_\wz$ is the law
	of the Markov chain with initial state $\wz$ and transition kernel $q$.
	Then
%	\begin{align*}
%		&n^{-1}\log P_\wz\{L_n\in O\}\ge n^{-1}\log P_\wz\{L_n\in B\}\\
%		&\ge n^{-1}\log 
%			\frac{E^{Q_\wz}\Big[\Big(\frac{d{Q_\wz}_{|\sF_{n-1}}}{d{P_\wz}_{|\sF_{n-1}}}\Big)^{-1}
%				\one\{L_n\in B\}\Big]}{Q_\wz\{L_n\in B\}}
%			+n^{-1}\log Q_\wz\{L_n\in B\}
%		%\ge since the Q measure might not be equivalent to P
%		\intertext{(by Jensen's inequality, applied  to $\log x$)}
%		&\ge\frac{-n^{-1} E^{Q_\wz}\Big[\log\Big(\frac{d{Q_\wz}_{|\sF_{n-1}}}{d{P_\wz}_{|\sF_{n-1}}}\Big)
%				\one\{L_n\in B\}\Big]}{Q_\wz\{L_n\in B\}}
%			+n^{-1}\log Q_\wz\{L_n\in B\}\\
%		&=\frac{-n^{-1} E^{Q_{\wz}}\Big[\log\Big(\frac{d{Q_\wz}_{|\sF_{n-1}}}{{dP_\wz}|_{\sF_{n-1}}}\Big)\Big]}{Q_\wz\{L_n\in B\}}
%			+\frac{E^{Q_\wz}\Big[\log\Big(\frac{{dQ_\wz}_{|\sF_{n-1}}}{d{P_\wz}_{|\sF_{n-1}}}\Big)
%				\one\{L_n\notin B\}\Big]}{n\,Q_\wz\{L_n\in B\}}\\
%		&\qquad\qquad
%			+n^{-1}\log Q_\wz\{L_n\in B\}\\
%		&=
%		\frac{-n^{-1}H\Big({Q_\wz}_{|\sF_{n-1}}\,\Big|\,{P_\wz}_{|\sF_{n-1}}\Big)}
%		{Q_\wz\{L_n\in B\}}
%			+\frac{E_\wz\Big[
%			\frac{d{Q_\wz}_{|\sF_{n-1}}}{d{P_\wz}|_{\sF_{n-1}}}
%			\log\Big(\frac{d{Q_\wz}_{|\sF_{n-1}}}{{dP_\wz}_{|\sF_{n-1}}}\Big)
%				\one\{L_n\notin B\}\Big]}
%			{n\,Q_\wz\{L_n\in B\}}\\
%		&\qquad\qquad
%			+n^{-1}\log Q_\wz\{L_n\in B\}\\
%		&\ge
%		\frac{-n^{-1}H\Big({Q_\wz}_{|\sF_{n-1}}\,\Big|\,{P_\wz}_{|\sF_{n-1}}\Big)}
%		{Q_\wz\{L_n\in B\}}
%		-\frac{e^{-1}}{n\,Q_\wz\{L_n\in B\}}%\\
%		%&\qquad\qquad
%			+n^{-1}\log Q_\wz\{L_n\in B\}.	
%	\end{align*}
	\begin{align*}
		&\!\!\!\!n^{-1}\log P_\wz\{L_n\in O\}\ge n^{-1}\log P_\wz\{L_n\in B\}\\
		&\!\!\!\!\ge n^{-1}\log 
			\frac{E^{Q_\wz}\Big[\Big(\frac{d{Q_\wz}_{|\sF_{n-1}}}{d{P_\wz}_{|\sF_{n-1}}}\Big)^{-1}
				\one\{L_n\in B\}\Big]}{Q_\wz\{L_n\in B\}}
			+n^{-1}\log Q_\wz\{L_n\in B\}
		%\ge since the Q measure might not be equivalent to P
		\intertext{(by Jensen's inequality, applied  to $\log x$)}
		&\!\!\!\!\ge\frac{-n^{-1} E^{Q_\wz}\Big[\log\Big(\frac{d{Q_\wz}_{|\sF_{n-1}}}{d{P_\wz}_{|\sF_{n-1}}}\Big)
				\one\{L_n\in B\}\Big]}{Q_\wz\{L_n\in B\}}
			+n^{-1}\log Q_\wz\{L_n\in B\}\\
		&\!\!\!\!=\frac{-n^{-1} E^{Q_{\wz}}\Big[\log\Big(\frac{d{Q_\wz}_{|\sF_{n-1}}}{{dP_\wz}|_{\sF_{n-1}}}\Big)\Big]}{Q_\wz\{L_n\in B\}}
			+\frac{n^{-1} E^{Q_\wz}\Big[\log\Big(\frac{{dQ_\wz}_{|\sF_{n-1}}}{d{P_\wz}_{|\sF_{n-1}}}\Big)
				\one\{L_n\notin B\}\Big]}{Q_\wz\{L_n\in B\}}
			+n^{-1}\log Q_\wz\{L_n\in B\}\\
		&\!\!\!\!=
		\frac{-n^{-1}H\Big({Q_\wz}_{|\sF_{n-1}}\,\Big|\,{P_\wz}_{|\sF_{n-1}}\Big)}
		{Q_\wz\{L_n\in B\}}
			+\frac{n^{-1} E_\wz\Big[
			\frac{d{Q_\wz}_{|\sF_{n-1}}}{d{P_\wz}|_{\sF_{n-1}}}
			\log\Big(\frac{d{Q_\wz}_{|\sF_{n-1}}}{{dP_\wz}_{|\sF_{n-1}}}\Big)
				\one\{L_n\notin B\}\Big]}
			{Q_\wz\{L_n\in B\}}+n^{-1}\log Q_\wz\{L_n\in B\}\\
		&\!\!\!\!\ge
		\frac{-n^{-1}H\Big({Q_\wz}_{|\sF_{n-1}}\,\Big|\,{P_\wz}_{|\sF_{n-1}}\Big)}
		{Q_\wz\{L_n\in B\}}
		-\frac{n^{-1}e^{-1}}{Q_\wz\{L_n\in B\}}%\\
		%&\qquad\qquad
			+n^{-1}\log Q_\wz\{L_n\in B\}.	
	\end{align*}
	In the last inequality we used $x\log x\ge-e^{-1}$.		
	Observe next that $\mu$ and $q$ satisfy the assumptions of Lemma \ref{ergodicity}.
	Thus, $Q_\wz\{L_n\in B\}$ converges to 1 for $\mu$-a.e.\ $\wz$.
	Furthermore, if we define
		\[F(\wz)=\sum_{z\in\range} q(\wz,\Sopr_z\wz)\,
			\log\frac{q(\wz,\Sopr_z\wz)}{\pr(\wz,\Sopr_z\wz)}\ge0,\text{ (by Jensen's inequality)}\]
	then $E^{\mu}[F]=H(\mu\times q\,|\,\mu\times\pr)<\infty$ and \eqref{as-erg-thm} implies that for $\mu$-a.e.\ $\wz$
%		\begin{align*}
%			\lim_{n\to\infty} n^{-1}H\Big({Q_\wz}_{|\sF_{n-1}}\,\Big|\,{P_\wz}_{|\sF_{n-1}}\Big)
%			&=
%			\lim_{n\to\infty}E^{Q_\wz}\Big[n^{-1}\sum_{k=0}^{n-1}F(\wz_k)\Big]\\
%			&=E^{\mu}[F]=H(\mu\times q\,|\,\mu\times\pr).
%		\end{align*}
		\begin{align*}
			\lim_{n\to\infty} n^{-1}H\Big({Q_\wz}_{|\sF_{n-1}}\,\Big|\,{P_\wz}_{|\sF_{n-1}}\Big)
			&=
			\lim_{n\to\infty}E^{Q_\wz}\Big[n^{-1}\sum_{k=0}^{n-1}F(\wz_k)\Big]
			=E^{\mu}[F]=H(\mu\times q\,|\,\mu\times\pr).
		\end{align*}
		
	We have thus shown that
		\[\varliminf_{n\to\infty} n^{-1}\log P_\wz\{L_n\in O\}\ge -H(\mu\times q\,|\,\mu\times\pr)\]
	for $\mu$-a.e.\ $\wz$. By Lemma \ref{ergodicity}, this is also true $P_0^{(\ell)}$-a.s. 
\end{proof}

To prove the lower bound in Theorem \ref{MC-qldp-th} we next need to remove the positivity restriction
on $q$. 
This is a simple consequence of convexity. %REFERENCE ATILLA??

\begin{proof}[Proof of the lower bound in Theorem \ref{MC-qldp-th}]
Recall our assumption \eqref{RWRE-invariant}. If an invariant measure $\P_\infty$ exists, then let
$\qhat=\pr$ and $\muhat(d\w,dz_{1,\ell})=\P_\infty(d\w)P_0^\w(dx_{0,\ell})$. If, alternatively,
$\E[|\log\pi_{0,z}|]<\infty$, for all $z\in\range$, then set
$\pihat_z=c\,e^{-|z|}/(\E[|\log\pi_{0,z}|]\vee1)$, where $c$ is chosen
so that $\sum_{z\in\range}\pihat_z=1$. This ensures that
	\begin{align*}
	%\label{pihat-entropy}
		\sum_z\pihat_z\,\E\Big[\log\frac{\pihat_z}{\pi_{0,z}}\Big]<\infty.
	\end{align*}
In this case, define $\muhat(d\w,dz_{1,\ell})=\P(d\w)P(dz_{1,\ell})$, where $P$ is 
an i.i.d.\ probability measure with $P\{Z_i=z\}=\pihat_z$. Let $\qhat(\wz,\Sopr_z\wz)=\pihat_z$. 

Observe that in either case, $\muhat\ll P_0^{(\ell)}$, $\muhat\qhat=\muhat$, and $H(\muhat\times\qhat\,|\,\muhat\times\pr)<\infty$. 

Let $\mu\in O$ be such that $\mu_0\ll\P$. By \eqref{RWRE-regular}, $\mu\ll P_0^{(\ell)}$.
Let $q$ be such that $\mu$ is $q$-invariant and $H(\mu\times  q\,|\,\mu\times\pr)<\infty$. 
%This implies that $q(\wz,\{\Sop_z\wz:z\in\range\})=1$, $\mu$-a.s. 

Fix $\e\in(0,1)$ and define $\mu_\e=\e\muhat+(1-\e)\mu$. For $\e>0$ small
enough, this measure belongs to the open set $O$. It is also clear that $\mu_\e\ll P_0^{(\ell)}$. 
Let $f_\e=\frac{d\mu}{d\mu_\e}$ and $\fhat_\e=\frac{d\muhat}{d\mu_\e}$. Note that Lemma \ref{ergodicity} implies that $\muhat\sim P_0^{(\ell)}$.
Thus, $\muhat\sim\mu_\e$ and $\mu_\e\{\fhat_\e>0\}=1$.
Next, define the kernel
	\[q_\e(\wz,\Sopr_z\wz)
	=\e\fhat_\e(\wz)\qhat(\wz,\Sopr_z\wz)+(1-\e)f_\e(\wz)q(\wz,\Sopr_z\wz).\]
Then, $\mu_\e$-a.s., $\sum_{z\in\range}q_\e(\wz,\Sopr_z\wz)=1$ and $q_\e(\wz,\Sopr_z\wz)>0$ for all $z\in\range$.
Furthermore, $\mu_\e q_\e=\mu_\e$. Indeed,
	\begin{align*}
		&\sum_{z\in\range}\int G(\Sopr_z\wz)[\e \fhat_\e(\wz)\qhat(\wz,\Sopr_z\wz)+(1-\e)f_\e(\wz)q(\wz,\Sopr_z\wz)]
			\,\mu_\e(d\zeta)\\
		&=\e\sum_{z\in\range}\int G(\Sopr_z\wz) \qhat(\wz,\Sopr_z\wz)\muhat(d\wz)+
		(1-\e)\sum_{z\in\range}\int G(\Sopr_z\wz) q(\wz,\Sopr_z\wz)\mu(d\wz)\\
		&=\e\int G\,d\muhat+(1-\e)\int G\,d\mu=\int G\,d\mu_\e.
	\end{align*}
	 	
On the other hand, Jensen's inequality (applied to $x\log x$) implies
%	\begin{align*}
%		&H(\mu_\e\times q_\e\,|\,\mu_\e\times\pr)\\
%		&\quad=\sum_z\int q_\e(\wz,\Sopr_z\wz)\,\log\frac{q_\e(\wz,\Sopr_z\wz)}{\pr(\wz,\Sopr_z\wz)}\,
%			\mu_\e(d\wz)\\
%		&\quad\le\sum_z\int \e\fhat_\e(\wz)\, \qhat(\wz,\Sopr_z\wz)\,\log\frac{\qhat(\wz,\Sopr_z\wz)}{\pr(\wz,\Sopr_z\wz)}\,\mu_\e(d\wz)\\
%		&\qquad+\sum_z\int (1-\e) f_\e(\wz)\,q(\wz,\Sopr_z\wz)\,\log\frac{q(\wz,\Sopr_z\wz)}{\pr(\wz,\Sopr_z\wz)}\,\mu_\e(d\wz)\\ 
%		&\quad=\e\sum_z\int \qhat(\wz,\Sopr_z\wz)\,\log\frac{\qhat(\wz,\Sopr_z\wz)}{\pr(\wz,\Sopr_z\wz)}\,\muhat(d\wz)\\ 
%		&\qquad+(1-\e)\sum_z\int q(\wz,\Sopr_z\wz)\,\log\frac{q(\wz,\Sopr_z\wz)}{\pr(\wz,\Sopr_z\wz)}\,\mu(d\wz)\\
%		&\quad=\e H(\muhat\times \qhat\,|\,\muhat\times\pr)+(1-\e)H(\mu\times q\,|\,\mu\times\pr). 
%	\end{align*}
	\begin{align*}
		&H(\mu_\e\times q_\e\,|\,\mu_\e\times\pr)
		=\sum_z\int q_\e(\wz,\Sopr_z\wz)\,\log\frac{q_\e(\wz,\Sopr_z\wz)}{\pr(\wz,\Sopr_z\wz)}\,
			\mu_\e(d\wz)\\
		&\le\sum_z\int \e\fhat_\e(\wz)\, \qhat(\wz,\Sopr_z\wz)\,\log\frac{\qhat(\wz,\Sopr_z\wz)}{\pr(\wz,\Sopr_z\wz)}\,\mu_\e(d\wz)
		+\sum_z\int (1-\e) f_\e(\wz)\,q(\wz,\Sopr_z\wz)\,\log\frac{q(\wz,\Sopr_z\wz)}{\pr(\wz,\Sopr_z\wz)}\,\mu_\e(d\wz)\\ 
		&=\e\sum_z\int \qhat(\wz,\Sopr_z\wz)\,\log\frac{\qhat(\wz,\Sopr_z\wz)}{\pr(\wz,\Sopr_z\wz)}\,\muhat(d\wz)
		+(1-\e)\sum_z\int q(\wz,\Sopr_z\wz)\,\log\frac{q(\wz,\Sopr_z\wz)}{\pr(\wz,\Sopr_z\wz)}\,\mu(d\wz)\\
		&=\e H(\muhat\times \qhat\,|\,\muhat\times\pr)+(1-\e)H(\mu\times q\,|\,\mu\times\pr). 
	\end{align*}
	
	Since $H(\muhat\times\qhat\,|\,\muhat\times\pr)<\infty$, applying Lemma \ref{weak-lower-bound} and then taking $\e\to0$ proves 
	the lower bound in Theorem \ref{MC-qldp-th} with function $H_\ell$. 
	The argument above can also be used to show that $H_\ell$
	is convex.   Thus the lower bound also holds with $H_\ell^{**}$.
	%fix a set \Omega_0 on which the ldp should hold with a rate function independent of \w.
         %Now let \w come from this set. (so we can't say "we're ok for
         %a.e. w'" because the bad null set has already been discarded)
	%assume some ellipticity so we can bounce back and forth, let \zhat be the
	%sequence (e_1, -e_1, e_1, -e_1, ....).  Let B be any open neighborhood of the measure
	%\mu = (1/2)(\delta_(\w, \zhat) + \delta_{T_{e_1}\w, \theta\zhat} )
	%then for large n and small neighborhood, P^\w_0( R_n\in B )
	%is about  \pi_{0,e_1}^\fl{n/2} \pi_{e_1,0}^{n-\fl{n/2}}
	%and the rate function is finite.  this measure is in the closure of the ac set, though. 
	%note that even if you take another \w' in that set, then under P_0^\w' the rate is still the same.
	%under \w' the walk will find a region that looks like \w and then goes back and forth making R_n 
	%eventually close enough to the \mu (to the accuracy required by the neighborhood B we
	%stick into the probability).
\end{proof}

\section{Upper bound}
\label{upper-qldp-section}
To motivate the complicated upper bound proof we first present a simple version of it that works for a finite
$\Omega$, which is the case of a periodic environment. In this case, the upper bound only requires the regularity assumption
\eqref{RWRE-regular}. Note also that the finiteness of $\Omega$ implies the existence of $\P_\infty$ as in assumption \eqref{RWRE-invariant},
and hence the lower bound (and, consequently, the large deviation principle) also holds under only \eqref{RWRE-regular}.

Fix $\ell\ge1$. Given bounded continuous functions $h$  and $f$ on $\bigom_\ell$ define
	\[K_{\ell,h}(f)=\P\text{-}\esssup_\w\sup_{z_{1,\ell}}\log \sum_z \pr(\wz,\Sopr_z\wz) e^{f(\wz)-h(\wz)+h(\Sopr_z\wz)}.\]
Define $\unK_\ell:\sC_b(\bigom_\ell)\to\R$ by
	\[\unK_\ell(f)=\inf_{h\in\sC_b(\bigom_\ell)} K_{\ell,h}(f).\]
A small modification of Donsker and Varadhan's argument in \citep{dons-vara-75-a},
given below in  Lemma \ref{lm-new-upper}, shows that for $\P$-a.e.\ $\w$
and all $z_{1,\ell}\in\range^\ell$ one has, for all compact sets $C\subset\M_1(\bigom_\ell)$, 
\begin{align*}
	\varlimsup_{n\to\infty}n^{-1}\log P_\wz\{L_n\in C\}&\le-\inf_{\mu\in C} \unK_\ell^*(\mu),
\end{align*}
where $\unK_\ell^*(\mu)=\sup_{f\in\sC_b(\bigom_-)}\{E^\mu[f]-\unK_\ell(f)\}$ is the convex conjugate of $\unK_\ell$.
Now we 
observe what it takes to turn this rate function $\unK_\ell^*$ into $H_\ell^{**}$ and thereby
match the upper and lower bounds. 

First
\beq	\begin{aligned}
	\unK_\ell(f)&=\inf_{h\in\sC_b(\bigom_\ell)}\P\text{-}\esssup_\w\sup_{z_{1,\ell}}\log \sum_z \pr(\wz,\Sopr_z\wz) e^{f(\wz)-h(\wz)+h(\Sopr_z\wz)}\\
	&=\inf_{h\in\sC_b(\bigom_\ell)}\sup_{\mu:\mu_0\ll\P} \{E^\mu[f]-E^\mu[h-\log \pr(e^h)]\}.
	\end{aligned} \label{motivK}\eeq 
	
On the other hand,  given  $\mu,\nu\in\M_1(\bigom_\ell)$,   we have this   variational formula:  
%	\begin{align*}
%	&\inf\{H(\alpha\,|\,\alpha_1\times\pr): \alpha\in\M_1(\bigom_\ell^2), \,
%	 \alpha_1=\mu, \,\alpha_2=\nu\}\\
%	&\qquad  =\sup_{h\in\sC_b(\bigom_\ell)}\{E^\nu[h]-E^\mu[\log\pr(e^h)]\},\end{align*}
	\begin{align*}
	&\inf\{H(\alpha\,|\,\alpha_1\times\pr): \alpha\in\M_1(\bigom_\ell^2), \,
	 \alpha_1=\mu, \,\alpha_2=\nu\}
	 =\sup_{h\in\sC_b(\bigom_\ell)}\{E^\nu[h]-E^\mu[\log\pr(e^h)]\},\end{align*}
where  $\alpha_1$ and $\alpha_2$ are the first and second marginals of $\alpha$
(see Theorem 2.1 of \citep{dons-vara-76}, Lemma 2.19 of \citep{sepp-93-ptrf-a}, or Theorem 13.1 of \citep{rass-sepp-ldp}).
Out of this we get 
	\begin{align}
	H_\ell^*(f)&=\sup_{\mu:\mu_0\ll\P}\Big\{E^\mu[f]-\inf\{H(\mu\times q\,|\,\mu\times\pr):\mu q=\mu\}\Big\}\nn\\
		&=\sup_{\mu:\mu_0\ll\P}\Big\{E^\mu[f]-\inf_{\alpha\in\M_1(\bigom_\ell^2)}\{H(\alpha\,|\,\alpha_1\times\pr):\alpha_1=\alpha_2=\mu\}\Big\}\nn\\
		&=\sup_{\mu:\mu_0\ll\P}\Big\{E^\mu[f]-\sup_{h\in\sC_b(\bigom_\ell)} E^\mu[h-\log \pr(e^h)]\Big\}\nn\\
		&=\sup_{\mu:\mu_0\ll\P}\inf_{h\in\sC_b(\bigom_\ell)} \Big\{E^\mu[f]-E^\mu[h-\log \pr(e^h)]\Big\}.
\label{motivH*}	\end{align}
%When $\Omega$ is finite, one can apply K\"onig's minimax theorem (see \citep{kass-94})
%to conclude that $\unK_\ell=H_\ell^*$ and the upper bound in Theorem \ref{MC-qldp-th} follows.
%Since, in general, $\{\mu:\mu_0\ll\P\}$ is not compact, we have to approximate $\Omega$ by finite spaces.
%This will cause us to define $\unK_\ell$ in a different manner.
Comparison of \eqref{motivK} and \eqref{motivH*} shows that matching  $\unK_\ell$ and $H_\ell^*$,
and thereby completing
the  upper bound of  Theorem \ref{MC-qldp-th},  boils down to an application of a 
 minimax theorem (such as K\"onig's theorem, see \citep{kass-94} or \citep{rass-sepp-ldp}).  
  However, the set $\{\mu:\mu_0\ll\P\}$ is compact if, and only if, $\P$ has finite support. 
%Actually, compact  U=\{\nu\ll\P\}  is equivalent to \P having finite support.
%<=   clear.
%=>   Suppose there exists a point \w in supp(\P) such that P(\w)=0.  But then
%\delta_\w  is in the closure of U  but not in U.
%
%So now supp(\P) = \{\w: \P(\w)>0\}.   Pick \w*\in\supp(\P).    By  ergodicity and invariance,
%\{\w: \P(\w)>0\} = \{ T_x\w*: x\in\Z^d \}.   But all the shifts of \w* have the same measure,
%hence the set must actually be finite.
%
%now throw away those \w with zero measure.   Fix \w* \in\Omega with \P\{\w^*\}>0.
%Then by shift-invariance and ergodicity  \Omega consists of shifts of \w*.
%
%Let a_i = \min\{ k\ge 1:  T_{ke_i}\w* = \w* \}.    Let A = \{ k\in\Z^d:  0\le k_i<a_i\}
%
%Then \Omega = \{ T_x\w* : x\in A\}.   (and there is no repetition in this set)
%This is why \Omega is periodic

%Note that both functions are continuous and hence could in fact be equal even in the general case. 
%However,
To get around this difficulty we abandon the attempt to prove 
  the equality of $\unK_\ell$ and $H_\ell^*$. Instead, we   redefine $\unK_\ell$ by taking infimum 
over a larger set of functions. This decreases $\unK_\ell$ and makes it possible to prove 
$H_\ell^*\ge\unK_\ell$. We will still be able to prove that $H_\ell^*\le\unK_\ell$
and that $\unK^*_\ell$ governs the large deviation upper bound. The new definition extends
the class of functions to include weak limits of $h_k(\Sopr_z\wz)-h_k(\wz)$, which
may lose this form. Such limits are the so-called ``corrector functions'', familiar
from quenched central limit theorems for random walk in random environment (see for example \citep{rass-sepp-05}
and the references therein). Let us introduce this class of functions and redefine $\unK_\ell$. 

\begin{definition}
\label{cK-def}
A measurable function $F:\bigom_\ell\times\range\to\R$ is in 
class $\K^p(\bigom_\ell\times\range)$
if it satisfies the following three conditions
\begin{itemize}
\item[(i)] Moment: for each $z_{1,\ell}\in\range^\ell$ and $z\in\range$, 
$\E[|F(\w,z_{1,\ell},z)|^p]<\infty$.
\item[(ii)] Mean zero: for all $n\ge\ell$ and $\{a_i\}_{i=1}^n\in \range^n$ the following holds.
If $\wz_0=(\w, a_{n-\ell+1,n})$ and $\wz_i=\Sopr_{a_i}\wz_{i-1}$ for $i=1,\dotsc, n$, then
\begin{align*}
\E\Big[\sum_{i=0}^{n-1} F(\wz_i,a_{i+1})\Big]=0.
\end{align*}
%This implies that $\E[f(\w,\zzl,\zzl,x)]=0$ for all $\zzl$ and $x$.
In other words, expectation vanishes
whenever the sequence of moves $\Sopr_{a_1},\dotsc,\Sopr_{a_n}$
 takes $(\w, z_{1,\ell})$ to $(T_x\w, z_{1,\ell})$
for all $\w$, for fixed $x$ and $z_{1,\ell}$. 
\item[(iii)] Closed loop: for $\P$-a.e.\ $\w$ and 
any two paths $\{\wz_i\}_{i=0}^n$ and 
$\{\bar\wz_j\}_{j=0}^m$ with 
$\wz_0=\bar\wz_0=(\w,z_{1,\ell})$, $\wz_n=\bar\wz_m$,  
$\wz_i=\Sopr_{a_i}\wz_{i-1}$,
and $\bar\wz_j=\Sopr_{\bar a_j}\bar\wz_{j-1}$, for $i,j>0$ and some 
$\{a_i\}_{i=1}^n\in\range^n$ and $\{\bar a_j\}_{j=1}^m\in\range^m$, 
we have
\begin{align*}
&\sum_{i=0}^{n-1} F(\wz_i,a_{i+1})
=\sum_{j=0}^{m-1} F(\bar\wz_j,\bar a_{j+1}).
\end{align*}
\end{itemize}
\end{definition}

\begin{remark}
In (iii) above, if one has a loop ($\wz_0=\wz_n$), then one can take $m=0$ and the 
right-hand side in the above display vanishes. 
\end{remark}

\begin{remark}
Note that functions $F(\wz,z)=h(\Sopr_z\wz)-h(\wz)$
%, as well as $\P$-almost-sure limits of such functions, 
belong to this class.
\end{remark}

The following sublinear growth property is crucial. We postpone its proof to the appendix.

\begin{lemma}
\label{CLASS K}
Let $(\Omega,\kS,\P,\{T_z\})$ be ergodic. % with $\Omega$ compact. 
Assume $\P$ satisfies assumptions \eqref{RWRE-compact} %, \eqref{RWRE-moment}, 
and \eqref{RWRE-span}.
%Let $\rrange\subset\range$ be a finite
%subset of $\range$ that also generates $\Z^d$ as an additive group.
%%If $\range$ is not finite, let $\rrange\subset\range$ be a finite subset generating $\Z^d$ as an
%%additive group. %always possible: collect the ones that generate the canonical base!
Let $F\in\K^p(\bigom_\ell\times\range)$ with $p>d$ being the same
as in assumption \eqref{RWRE-moment}. 
Then, for $\P$-a.e.\ $\w$
\[\lim_{n\to\infty} n^{-1}
\sup_{z_{1,\ell}\in\rrange^{\ell}}\sup_{\substack{(a_1,\dotsc,a_n)\in\rrange^n\\
%\wz_0,\cdots,\wz_{n-1}\\ 
\wz_0=(\w,z_{1,\ell}),\wz_i=\Sopr_{a_i}\wz_{i-1}}}
\Big|\sum_{k=0}^{n-1} F(\wz_k,a_{k+1})\Big|=0.\]
\end{lemma}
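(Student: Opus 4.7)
The plan is to exhibit $F$ as a discrete gradient via the closed-loop property and then prove sublinear growth of the resulting scalar potential, using the mean-zero hypothesis together with the moment exponent $p > d$.

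First I would use property (iii) to construct a potential: for each $\w$ and starting state $\wz_0 = (\w, z_{1,\ell})$, define $G_{\wz_0}(\wz) = \sum_{k=0}^{n-1} F(\wz_k, a_{k+1})$ along any admissible path from $\wz_0$ to $\wz$. By (iii) the value is path-independent, which yields the cocycle identity $F(\wz, a) = G_{\wz_0}(\Sopr_a \wz) - G_{\wz_0}(\wz)$ and reduces the sum in the statement to $G_{\wz_0}(\wz_n)$. Since every $n$-step endpoint has the form $\wz_n = (T_x \w, y_{1,\ell})$ with $y_{1,\ell} \in \range^\ell$ and $|x| \leq Cn$ for $C = \max_{z \in \range}|z|$, and $\range^\ell$ is finite by \eqref{RWRE-compact}, the task reduces to proving that for each fixed pair $z_{1,\ell}, y_{1,\ell}$,
\[
\max_{|x| \leq Cn} \bigl|\, G_{(\w, z_{1,\ell})}(T_x \w, y_{1,\ell}) \,\bigr| = o(n), \quad \text{$\P$-a.s.}
\]
Writing $\Psi(x, \w) := G_{(\w, z_{1,\ell})}(T_x \w, y_{1,\ell})$, condition (iii) makes $\Psi$ an additive cocycle in $x$ under the $\Z^d$-action $\{T_x\}$, and (ii) gives $\E[\Psi(x, \cdot)] = 0$ on the set of $x$ reached by loops that restore the $\ell$-tuple; \eqref{RWRE-span} ensures this set generates $\Z^d$.

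The decisive step would then be a classical sublinear-growth estimate of Kozlov / Boivin--Derriennic type for a mean-zero $L^p$ cocycle on $\Z^d$ with $p > d$. The mechanism I would follow is: apply the multidimensional ergodic theorem to $|F|^p$ to control averages of increments over lattice balls; cover $\{|x| \leq Cn\}$ dyadically by $\asymp \epsilon^{-d}$ balls of radius $\epsilon n$; estimate deviations on each scale by Chebyshev at the $p$-th moment; and sum over dyadic scales to obtain a Borel--Cantelli-summable series, which converges precisely when $p > d$. Together with the finiteness of $\range^\ell$ from the reduction above, this would yield the claimed uniform sublinear bound.

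The hard part will be this sublinear-growth estimate, and two issues require care. First, the mean-zero property (ii) is stated only for loops restoring the terminal $\ell$-tuple, so one must patch paths with short connecting segments to adjust the $\ell$-tuple when building $\Psi$ with a vanishing mean; finiteness of $\range$ together with \eqref{RWRE-span} make this possible without affecting the leading-order behavior. Second, the exponent $p > d$ is the exact borderline condition for the covering and Borel--Cantelli argument, so the $L^p$ estimates must be kept sharp throughout. This is also the unique place in the paper where $p>d$ is actually used, which matches the hint in the text.
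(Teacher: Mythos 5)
Your first half is exactly the paper's: build a scalar potential from $F$ via the closed-loop property, use finiteness of $\range^\ell$ to reduce to $\max_{|x|\le Cn}|\Psi(x)|=o(n)$ for the potential $\Psi$, and observe that the shift relation and the mean-zero hypothesis make $\Psi$ a $\Z^d$-cocycle with $\E$-mean-zero increments. (The small patch you flag about restoring the $\ell$-tuple is exactly what the paper's function $L$ and assumption \eqref{RWRE-span} handle.) The gap is in the hard step, and it is a real one, not merely a missing reference.

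The mechanism you sketch --- dyadic covering, Chebyshev at the $p$-th moment, Borel--Cantelli --- does not close. By stationarity and the triangle inequality one only has $\|\Psi(x)\|_{L^p}\lesssim |x|$, so Chebyshev gives $\P\{|\Psi(x)|>\e n\}\lesssim (|x|/\e n)^p$, which is $O(1)$ when $|x|\asymp n$; a union bound over $\asymp n^d$ lattice points produces $O(n^d)$, not a summable series, and the correlations across $n$ prevent rescuing this by sampling $n$ geometrically. Running the dyadic chaining carefully (choosing the scale-$k$ threshold $\e_k\asymp\e n\,2^{-\alpha(K-k)}$ with $\alpha<1-d/p$) yields at best $\P\{\max_{|x|\le n}|\Psi(x)|>\e n\}\le C\e^{-p}$ uniformly in $n$: that is tightness of $n^{-1}\max|\Psi|$, not a.s.\ convergence to zero. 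This is in fact all that the moment and covering argument can deliver, and it is precisely what the Garsia--Rodemich--Rumsey step in the paper delivers too: equicontinuity of $g_n(\xi)=n^{-1}\Psi(n\xi)$, hence subsequential uniform limits $g$ on compacts, Hölder continuous with exponent $\gamma-2d>0$ (this is where $p>d$ enters --- not as a Borel--Cantelli summability threshold, but to make $\gamma-2d>0$ achievable). A subsequential limit $g$ with $g(0)=0$ need not vanish; it could be any Hölder function through the origin, e.g.\ linear. What is missing from your proposal is the second step, the homogenization: one must show that every subsequential limit $g$ is constant. The paper does this by computing discrete directional derivatives of $g_n$ --- Riemann sums of the increment $f(\cdot,\bar z,\bar z,e_{i_0})$ over boxes --- and invoking the multidimensional ergodic theorem together with the mean-zero property (c) to show these vanish. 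That is the step where hypothesis (ii) is actually consumed; noting that $\E[\Psi(x)]=0$ by itself gives nothing about the a.s.\ behavior of $\max|\Psi|/n$. Without this homogenization argument the proof does not conclude, and the ``classical Boivin--Derriennic estimate'' you invoke is, in the form you would need it, exactly this two-part GRR-plus-homogenization argument rather than a moment/covering bound.
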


\begin{remark}\label{explanation}
The above lemma clarifies why the method we use requires the condition $p>d$. Indeed, consider the case
$\ell=0$, $\Omega=\cP^{\Z^d}$, $\P$ a product measure, and $F(\w,z)=h(T_z\w)-h(\w)$ with $h$ being a function of just $\w_0$. 
Then, the conclusion of the lemma is that $n^{-1}\sup_{|x|\le n}|h(\w_x)|$ vanishes
at the limit. For this to happen one needs more than $d$ moments for $h$.
\end{remark}
%that thing is constant by K's 0-1.   So let C be a finite constant above it.  
%2nd B-C says must have  \sum_{i\in Z^d}  P( X_i  > C|i| ) < \infty 
%same as   \sum k^{d-1}  P( X/C  > k )  <\infty   same as EX^d<\infty

Now, for $F\in\K^p(\bigom_\ell\times\range)$ and $f\in\sC_b(\bigom_\ell)$, redefine
	\[K_{\ell,F}(f)=\P\text{-}\esssup_\w\sup_{z_{1,\ell}}\log \sum_z \pr(\wz,\Sopr_z\wz) e^{f(\wz)+F(\wz,z)}.\]
Redefine $\unK_\ell:\sC_b(\bigom_\ell)\to\R$ by
	\begin{align*}%\label{K-def}
	\unK_\ell(f)=\inf_{F\in\K^p(\bigom_\ell\times\range)} K_{\ell,F}(f).
	\end{align*}

\begin{lemma}\label{lm-new-upper}
%Assume $\P$ satisfies the regularity assumption \eqref{RWRE-regular}. 
Assume the conclusion of Lemma \ref{CLASS K} holds.
For $\P$-a.e.\ $\w$ and all $z_{1,\ell}$,
for all compact sets $C\subset\measures(\bigom_\ell)$,
\begin{align*}
	\varlimsup_{n\to\infty}n^{-1}\log P_\wz\{L_n\in C\}&\le-\inf_{\mu\in C} \unK_\ell^*(\mu),
\end{align*}
where $\unK_\ell^*(\mu)=\sup_{f\in\sC_b(\bigom_\ell)}\{E^\mu[f]-\unK_\ell(f)\}$ is the convex conjugate of $\unK_\ell$.
\end{lemma}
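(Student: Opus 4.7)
The plan is to execute the exponential Chebyshev scheme from the proof of the upper bound in \citep{dons-vara-75-a}, with a general $F\in\K^p(\bigom_\ell\times\range)$ playing the role of a corrector in place of a coboundary $h(\Sopr_z\wz)-h(\wz)$, and Lemma \ref{CLASS K} used to absorb the resulting $\sum F$-term. Fix $f\in\sC_b(\bigom_\ell)$ and $F\in\K^p(\bigom_\ell\times\range)$. The definition of $K_{\ell,F}(f)$ is exactly the $\P$-essential, $z_{1,\ell}$-uniform one-step bound
\[
\sum_{z}\pr(\wz',\Sopr_z\wz')\,e^{f(\wz')+F(\wz',z)}\le e^{K_{\ell,F}(f)},
\]
valid for $\P$-a.e.\ $\w$ and every $z'_{1,\ell}\in\range^\ell$. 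Iterating through $n$ successive conditionings on the filtration $\sF_k=\sigma(\wz_0,\ldots,\wz_k)$ gives
\[
E_\wz\Big[\exp\Big\{\sum_{k=0}^{n-1}\bigl[f(\wz_k)+F(\wz_k,Z_{\ell+k+1})\bigr]\Big\}\Big]\le e^{nK_{\ell,F}(f)}.
\]

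Next, invoke Lemma \ref{CLASS K}: for $\P$-a.e.\ $\w$, the partial sum $\sum_{k=0}^{n-1}F(\wz_k,Z_{\ell+k+1})$ is $o(n)$, uniformly over the start $z_{1,\ell}\in\range^\ell$ and the realization of steps. Multiplying the inequality above by $e^{-\sum F}\le e^{o(n)}$ therefore yields
\[
\varlimsup_{n\to\infty} n^{-1}\log E_\wz\Big[e^{\sum_{k=0}^{n-1}f(\wz_k)}\Big]\le K_{\ell,F}(f).
\]
For each $f$ I would take a countable minimizing sequence $F_m\in\K^p$ with $K_{\ell,F_m}(f)\searrow\unK_\ell(f)$ and intersect the corresponding $\P$-null sets to conclude $\varlimsup_n n^{-1}\log E_\wz[e^{\sum f(\wz_k)}]\le\unK_\ell(f)$. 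Extension to every $f\in\sC_b(\bigom_\ell)$ on a single $\P$-null set follows by approximating in the sup norm from a countable dense subfamily $\{f_j\}\subset\sC_b(\bigom_\ell)$ and using that both $f\mapsto\unK_\ell(f)$ and $f\mapsto\varlimsup_n n^{-1}\log E_\wz[e^{\sum f(\wz_k)}]$ are $1$-Lipschitz in $\|\cdot\|_\infty$.

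For the compact-set upper bound I apply the standard weak-topology covering. Given $\alpha<\inf_{\mu\in C}\unK_\ell^*(\mu)$, for each $\mu\in C$ pick $f_\mu\in\sC_b(\bigom_\ell)$ with $E^\mu[f_\mu]-\unK_\ell(f_\mu)>\alpha$; by weak continuity of $\nu\mapsto E^\nu[f_\mu]$ the inequality $E^\nu[f_\mu]>\alpha+\unK_\ell(f_\mu)$ persists on a weak neighborhood $G_\mu\ni\mu$. Compactness of $C$ produces a finite subcover $G_{\mu_1},\ldots,G_{\mu_N}$. Chebyshev's inequality on each $G_{\mu_i}$ together with the exponential-moment bound of the previous paragraph gives
\[
\varlimsup_{n\to\infty}n^{-1}\log P_\wz\{L_n\in G_{\mu_i}\}\le-\bigl(\alpha+\unK_\ell(f_{\mu_i})\bigr)+\unK_\ell(f_{\mu_i})=-\alpha.
\]
A finite union bound, followed by letting $\alpha\uparrow\inf_C\unK_\ell^*$, finishes the argument.

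The main technical obstacle is precisely what Lemma \ref{CLASS K} delivers in advance: without uniform sublinearity of correctors in $\K^p$, substituting $F$ for a coboundary gains nothing over the original Donsker--Varadhan scheme. A secondary, bookkeeping difficulty is to amalgamate the $\P$-null sets coming from (a) the minimizing $F_m$ attached to each $f_j$, (b) the dense family $\{f_j\}$, and (c) the finitely many starts in $\range^\ell$, into a single exceptional set independent of $f$, $C$, and $z_{1,\ell}$; the $1$-Lipschitz estimates together with finiteness of $\range^\ell$ make this routine. Note that the restriction to compact $C$ is intrinsic to this strategy—extending to closed sets would require separate exponential tightness, which is not addressed at this step.
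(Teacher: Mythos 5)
Your proof is correct and follows essentially the same Donsker--Varadhan exponential-change-of-measure scheme as the paper: insert the corrector $F\in\K^p(\bigom_\ell\times\range)$ into the tilting, iterate the one-step bound $\sum_z\pr(\wz,\Sopr_z\wz)e^{f(\wz)+F(\wz,z)}\le e^{K_{\ell,F}(f)}$ (using $T$-invariance of $\P$ so the bound propagates along the chain), and absorb the cumulative $\sum F$-term via the uniform sublinearity supplied by Lemma~\ref{CLASS K}. The organizational difference is that you first establish the pressure bound $\varlimsup_n n^{-1}\log E_\wz[e^{\sum f(\wz_k)}]\le\unK_\ell(f)$ on a single full-measure set by working through a countable minimizing sequence $F_m$ and a countable $\|\cdot\|_\infty$-dense family $\{f_j\}$ with the $1$-Lipschitz estimate, and only then run the compact-set covering via Chebyshev; the paper instead interleaves the two, fixing $\mu\in C$, choosing $f,F$ for that $\mu$, and estimating $P_\wz\{L_n\in B_\e(\mu)\}$ directly before covering $C$ finitely. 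Your two-phase organization is somewhat tidier with respect to the $\P$-null set bookkeeping (the null set is built up front, independently of $C$), whereas the paper leaves that amalgamation implicit; the content and estimates are the same.
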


\begin{proof}
Fix $\mu\in C$ and $c<\inf_C\unK^*_\ell$.   
There exist $f\in\sC_b(\bigom_\ell)$ and $F\in \K^p(\bigom_\ell\times\range)$
such that $E^\mu[f]-K_{\ell,F}(f)>c$. Fix $\e>0$ and
define the neighborhood
	\[B_\e(\mu)=\{\nu\in\M_1(\bigom_\ell):|E^\nu[f]-E^\mu[f]|<\e\}.\]

%{\bf ** Applying Lemma \ref{CLASS K} requires a finite $\rrange$. But since we are using a compact $C$, can't we somehow assume that walk only takes steps in
%some finite $\rrange$? **}

Lemma \ref{CLASS K}
implies that for $\P$-a.e.\ $\w$ there exists a finite $c_\e(\w)>0$ such that for all $n$ and $z_{1,\ell}\in\range^\ell$, 
\[\sum_{k=0}^{n-1}F(\wz_k,Z_{k+\ell+1})\ge-c_\e-n\e,\quad P_{\wz}\text{-a.s.}\]
Therefore, for all $z_{1,\ell}\in\range^\ell$ and $\P$-a.e.\ $\w$,
%\begin{align*}
%	&P_\wz\{L_n\in B_\e\}=E_\wz[e^{nL_n(f)}e^{-nL_n(f)}\one\{L_n\in B_\e\}]\\
%		&\le e^{c_\e+n\e}\,e^{-n\inf_{\nu\in B_\e}E^\nu[f]}\,E_{\wz}\Big[\exp\Big\{-c_\e-n\e+{\sum_{k=0}^{n-1}f(\wz_k)}\Big\}\Big]\\
%		&\le
%		e^{c_\e+n\e}\,e^{-nE^\mu[f]}e^{n\e}\,E_{\wz}\Big[\exp\Big\{{\sum_{k=0}^{n-1}\Big(f(\wz_k)+F(\wz_k,Z_{k+\ell+1})\Big)}\Big\}\Big]\\
%		&=e^{c_\e+n\e}\,e^{-nE^\mu[f]}e^{n\e}\\
%		&\quad\times E_{\wz}\bigg[E_{\wz}\Big[\exp\Big\{{\sum_{k=0}^{n-1}\Big(f(\wz_k)+F(\wz_k,Z_{k+\ell+1})\Big)}\Big\}\,\Big|\,\wz_i:i\le{n-1}\Big]\bigg]\\
%		&=e^{c_\e+n\e}\,e^{-nE^\mu[f]}e^{n\e}\\
%		&\quad\times E_{\wz}\Big[\exp\Big\{{\sum_{k=0}^{n-2}\Big(f(\wz_k)+F(\wz_k,Z_{k+\ell+1})\Big)}\Big\}
%		E_{\wz_{n-1}}[e^{f(\wz_0)+F(\wz_0,Z_{\ell+1})}]\Big]\\
%		&\le e^{c_\e+n\e}\,e^{-nE^\mu[f]}e^{n\e}\,e^{K_{\ell,F}(f)}\,E_{\wz}\Big[\exp\Big\{{\sum_{k=0}^{n-2}\Big(f(\wz_k)+F(\wz_k,Z_{k+\ell+1})\Big)}\Big\}\Big]\\
%		&\le\cdots\le e^{c_\e+n\e}\,e^{-nE^\mu[f]}e^{n\e}\,e^{n K_{\ell,F}(f)}\le e^{c_\e+2n\e-cn}.
%\end{align*}
\begin{align*}
	&P_\wz\{L_n\in B_\e\}=E_\wz[e^{nL_n(f)}e^{-nL_n(f)}\one\{L_n\in B_\e\}]\\
		&\le e^{c_\e+n\e}\,e^{-n\inf_{\nu\in B_\e}E^\nu[f]}\,E_{\wz}\Big[\exp\Big\{-c_\e-n\e+{\sum_{k=0}^{n-1}f(\wz_k)}\Big\}\Big]\\
		&\le
		e^{c_\e+n\e}\,e^{-nE^\mu[f]}e^{n\e}\,E_{\wz}\Big[\exp\Big\{{\sum_{k=0}^{n-1}\Big(f(\wz_k)+F(\wz_k,Z_{k+\ell+1})\Big)}\Big\}\Big]\\
		&=e^{c_\e+n\e}\,e^{-nE^\mu[f]}e^{n\e}\,
		 E_{\wz}\bigg[E_{\wz}\Big[\exp\Big\{{\sum_{k=0}^{n-1}\Big(f(\wz_k)+F(\wz_k,Z_{k+\ell+1})\Big)}\Big\}\,\Big|\,\wz_i:i\le{n-1}\Big]\bigg]\\
		&=e^{c_\e+n\e}\,e^{-nE^\mu[f]}e^{n\e}\,
		 E_{\wz}\Big[\exp\Big\{{\sum_{k=0}^{n-2}\Big(f(\wz_k)+F(\wz_k,Z_{k+\ell+1})\Big)}\Big\}
		E_{\wz_{n-1}}[e^{f(\wz_0)+F(\wz_0,Z_{\ell+1})}]\Big]\\
		&\le e^{c_\e+n\e}\,e^{-nE^\mu[f]}e^{n\e}\,e^{K_{\ell,F}(f)}\,E_{\wz}\Big[\exp\Big\{{\sum_{k=0}^{n-2}\Big(f(\wz_k)+F(\wz_k,Z_{k+\ell+1})\Big)}\Big\}\Big]\\
		&\le\cdots\le e^{c_\e+n\e}\,e^{-nE^\mu[f]}e^{n\e}\,e^{n K_{\ell,F}(f)}\le e^{c_\e+2n\e-cn}.
\end{align*}

	Since $C$ is compact, it can be covered by a finite collection of $B_\e({\mu_i})$'s and
	 	\[\varlimsup_{n\to\infty} 
			n^{-1}\log P_\wz\{L_n\in C\}\le -c+2\e.\]
	Thus, taking $\e\to0$ and $c$ to $\inf_C\unK_\ell^*$ proves the lemma for a compact $C$.
%
%{\bf ** To pass to general closed sets $C$ one needs to establish exponential tightness. **}
\end{proof}

Our next theorem gives the connection  between $\unK_\ell$ and $H_\ell$.

\begin{theorem}\label{H*=unK}
Same assumptions as in Theorem \ref{MC-qldp-th}. Then, $H_\ell^{*}\equiv\unK_\ell$ for all $\ell\ge1$.
\end{theorem}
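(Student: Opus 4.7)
The plan is to establish the two inequalities $\unK_\ell \geq H_\ell^*$ and $\unK_\ell \leq H_\ell^*$ separately; the first is essentially a direct application of the Donsker--Varadhan duality combined with the sublinearity of cocycles, while the second is a minimax argument tailored to avoid the compactness obstruction noted in the paper.

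For $\unK_\ell \geq H_\ell^*$, I would fix any $F \in \K^p$ and any admissible pair $(\mu,q)$ with $\mu_0 \ll \P$, $\mu q = \mu$, and $H(\mu\times q\,|\,\mu\times\pr)<\infty$.  The pointwise Donsker--Varadhan inequality
\[
\log \sum_z \pr(\wz,\Sopr_z\wz)e^{f(\wz)+F(\wz,z)} \;\geq\; \sum_z q(\wz,\Sopr_z\wz)[f(\wz)+F(\wz,z)] \;-\; \sum_z q(\wz,\Sopr_z\wz)\log\frac{q(\wz,\Sopr_z\wz)}{\pr(\wz,\Sopr_z\wz)},
\]
integrated against $\mu$, yields $K_{\ell,F}(f) \geq E^\mu[f] + E^{\mu\times q}[F] - H(\mu\times q\,|\,\mu\times\pr)$ (the essential-supremum over $\w$ dominates the $\mu$-average because $\mu_0 \ll \P$).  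The decisive step is to show $E^{\mu\times q}[F] = 0$: by Lemma~\ref{ergodicity} the Markov chain driven by $q$ from $\mu$ is ergodic and $\mu \sim P_0^{(\ell)}$, so $\mu$-a.e.\ starting environment is $\P$-typical; Lemma~\ref{CLASS K} then bounds the cocycle partial sums $\sum_{k<n} F(\wz_k, z_{k+1})$ by $o(n)$ uniformly along paths; and the pointwise ergodic theorem identifies the resulting vanishing time-average with $E^{\mu\times q}[F]$.  Taking $\sup$ over $(\mu,q)$ and then $\inf$ over $F$ closes this direction.

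For $\unK_\ell \leq H_\ell^*$, I would rewrite both sides as minimax problems over a common functional $\Phi_F(\mu,q) = E^{\mu\times q}[f+F] - H(\mu\times q\,|\,\mu\times\pr)$, with $(\mu,q)$ ranging over pairs such that $\mu_0 \ll \P$.  Expressing $\P$-essential suprema as suprema over measures $\mu_0 \ll \P$ and applying Donsker--Varadhan pointwise gives $\unK_\ell(f) = \inf_F \sup_{(\mu,q)} \Phi_F(\mu,q)$.  On the other hand,
\[
H_\ell^*(f) \;=\; \sup_{(\mu,q)} \inf_{F \in \K^p} \Phi_F(\mu,q),
\]
because the inner infimum equals $0$ whenever $\mu q = \mu$ (the vanishing argument above) and equals $-\infty$ otherwise: if $\mu q \neq \mu$, pick bounded continuous $h$ with $E^{\mu q}[h] \neq E^\mu[h]$, note that $F_0(\wz,z) = h(\Sopr_z\wz) - h(\wz)$ lies in $\K^p$ (it is a bounded gradient, hence automatically satisfies the moment, mean-zero and closed-loop conditions), and observe that $E^{\mu\times q}[\alpha F_0] = \alpha(E^{\mu q}[h] - E^\mu[h])$ can be driven to $-\infty$ by scaling.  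The theorem thereby reduces to the exchange $\inf_F \sup_{(\mu,q)} \Phi_F = \sup_{(\mu,q)} \inf_F \Phi_F$, which I would justify by König's minimax theorem (\citep{kass-94}) using the linearity of $\Phi_F$ in $F$ and the concavity plus upper semicontinuity of $(\mu,q) \mapsto \Phi_F(\mu,q)$ (coming from the convexity and lower semicontinuity of $\nu \mapsto H(\nu\,|\,\nu_1 \times \pr)$).

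The main obstacle is precisely the minimax step: the set $\{\mu : \mu_0 \ll \P\}$ is not compact in any weak topology---the very difficulty the paper flagged---so König's theorem cannot be applied naively.  The plan is to exploit that $\K^p$ is a vector space closed under the weak $L^p$ limits needed, and to choose a topology on $\K^p$ making $F \mapsto \Phi_F(\mu,q)$ continuous and the class stable under the limits arising from a minimizing sequence.  The $p>d$ moment hypothesis in \eqref{RWRE-moment} is essential here because it is exactly the regime in which Lemma~\ref{CLASS K} holds, and hence in which the cancellation $E^{\mu\times q}[F]=0$ survives passage to weak $L^p$ limits; this is the point of enlarging the infimum in $\unK_\ell$ from bounded gradients $h(\Sopr_z\wz) - h(\wz)$ to all of $\K^p$.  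Setting up this functional-analytic framework is where the bulk of the technical work resides.
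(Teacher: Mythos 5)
Your proposal diverges from the paper's in a way that exposes a genuine gap. The paper only proves one direction directly, namely $\unK_\ell(f)\le H_\ell^*(f)$, and then obtains $H_\ell^*\equiv\unK_\ell$ for free from the LDP machinery: that inequality gives $H_\ell^{**}\le\unK_\ell^*$, Lemma~\ref{lm-new-upper} plus the already-proved lower bound plus uniqueness of l.s.c.\ rate functions force $H_\ell^{**}=\unK_\ell^*$, and then $H_\ell^*=\unK_\ell^{**}=\unK_\ell$ because $\unK_\ell$ is convex and sup-norm continuous. You instead try to establish both inequalities directly, and it is precisely your ``direct'' half $\unK_\ell\le H_\ell^*$ that is left unfinished. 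You correctly identify that $\{\mu:\mu_0\ll\P\}$ is not compact so K\"onig's theorem cannot be applied in the form you write, but your remedy --- ``choose a topology on $\K^p$ making $F\mapsto\Phi_F$ continuous and the class stable under the limits'' --- does not address the problem: $\K^p$ is an unbounded subspace, hence not compact in any reasonable topology either, and a minimax theorem still needs compactness on one of the two sides. The actual resolution in the paper (Lemma~\ref{A>K}) is to project $\mu_0$ onto a filtration of finite $\sigma$-algebras $\kS_k$, which makes $\M_1^{k,2}$ compact, apply K\"onig's theorem at each level $k$, and then pass to the limit $k\to\infty$ by extracting a weak $L^p$ limit of the gradients $F_{k,\e}=\E[h_{k,\e}(\Sopr_z\wz)-h_{k,\e}(\wz)\,|\,\kS_{k-1}]$; the uniform $L^p$ bound requires the ellipticity hypothesis \eqref{RWRE-elliptic} (Lemma~\ref{F-lemma}), and the passage from weak to a.s.\ convergence uses Mazur's theorem on convex combinations. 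None of this apparatus appears in your sketch, and ``setting up the framework'' is not a smaller task than the rest of the proof --- it \emph{is} the proof.

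Two smaller points on the direction $\unK_\ell\ge H_\ell^*$, which the paper does not prove directly but which you attempt. First, the invocation of Lemma~\ref{ergodicity} is not justified: its hypothesis (b) requires $q(\wz,\Sopr_z\wz)>0$ for every $z\in\range$, which a general kernel of finite entropy need not satisfy. One can sidestep this by using Birkhoff's theorem for the stationary (not necessarily ergodic) chain and arguing that the conditional expectation onto the invariant $\sigma$-field vanishes $\mu$-a.s.\ by Lemma~\ref{CLASS K}, but you would then owe a second point: there is no reason for $F\in\K^p(\bigom_\ell\times\range)$ to lie in $L^1(\mu\times q)$. Membership in $\K^p$ only controls $\E[|F|^p]$ under $\P$, and $d\mu_0/d\P$ is merely integrable, not in the conjugate space $L^{p'}(\P)$. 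Without integrability, $E^{\mu\times q}[F]$ may be undefined and the ergodic-theorem identification breaks down. In the paper's architecture this issue simply never arises.
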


We are now ready to prove the above theorem and finish the proof of Theorem \ref{MC-qldp-th}.

\begin{proof}[Proof of Theorem \ref{H*=unK} and the upper bound in Theorem \ref{MC-qldp-th}]
It suffices to prove that for bounded continuous functions $f$, 
\begin{align}\label{tempo-identity}
\unK_\ell(f)\le H_\ell^*(f)=\sup_\mu\{E^\mu[f]-H_\ell(\mu)\}.
\end{align}
Indeed, this would imply that $H^{**}_\ell\le\unK^*_\ell$ and Lemma \ref{lm-new-upper} implies then the upper bound
in Theorem \ref{MC-qldp-th}. Furthermore, due to the lower bound and the uniqueness of the rate function (see Theorem 2.18  of \citep{rass-sepp-ldp}), 
we in fact have that $H^{**}_\ell=\unK_\ell^*$.
%we have shown that H^**\le \unK^* and that H^** is the lower bound rate and \unK^* is the upper bound rate.
%so both H^**_\ell and \unK^* are rate functions.
%hence, by uniqueness, they are equal.
%also:
%we have \inf_O H^{**}_\ell\ge\inf_{\bar O}\unK^*$. thus if \mu\in O we have
%H^{**}(\mu)\ge\inf_{\bar O}\unK^*. now if G is open and \mu\in G, then there exists
%an O such that \bar O\subset G and \mu\in O. then H^{**}(\mu)\ge\inf_{\bar O}\unK^*\ge\inf_G\unK^*
%taking sup over such G, we get H^{**}\ge\unK^*. 
This implies that $H^*_\ell=\unK_\ell^{**}$ and since $\unK_\ell$ is convex and continuous in the uniform norm, we have that $H^*_\ell=\unK_\ell$.

Let us now prove \eqref{tempo-identity}. This is trivial when $H_\ell^*(f)=\infty$. Assume thus that $H_\ell^*(f)<\infty$.

Let $\kS_k$ be an increasing sequence of  finite $\sigma$-algebras on $\Omega$, generating $\kS$.
Assume that for all $k\ge1$ and $y\in\range$, $T_y\kS_{k-1}\subset\kS_k$. %{\bf ** need $\range$ to be finite! FIX THAT! **}
%Let $\bar\kS_k=\kS_k\otimes 2^{\range^{\ell}}$ be its extension to $\bigom_\ell$, where $2^{\range^\ell}$ is the power set. 
Let $\M_1^k=\M_1^k(\bigom_\ell)$ be the set of probability measures $\mu$ on $\bigom_\ell$ such that $\mu_0\ll\P$ and
$\frac{d\mu_0}{d\P}$ is $\kS_k$-measurable. 
Now write
\begin{align*}%\label{minimax:page}
	H_\ell^*(f)&=\sup_{\mu:\mu_0\ll\P}\{E^\mu[f]-H_\ell(\mu)\}
	\ge\sup_{\mu\in\M_1^k}\{E^\mu[f]-H_\ell(\mu)\}.
\end{align*}

To conclude the proof of \eqref{tempo-identity}, one invokes the following lemma.

\begin{lemma}\label{A>K}
Same assumptions as in Theorem \ref{MC-qldp-th}. Fix $\ell\ge1$ and let $f\in\sC_b(\bigom_\ell)$ and 
$A<\infty$ be such that 
	\[A\ge\sup_{\mu\in\M_1^k}\{E^\mu[f]-H_\ell(\mu)\},\]
for all $k\ge1$. Then, $A\ge\unK_\ell(f)$.
\end{lemma}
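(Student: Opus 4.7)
The strategy is to witness $\unK_\ell(f)\le A$ by producing, for each $\e>0$, a corrector $F\in\K^p(\bigom_\ell\times\rrange)$ with $K_{\ell,F}(f)\le A+\e$. The correctors will arise (at least initially) as gradient-type perturbations $F_h(\wz,z)=h(\Sopr_z\wz)-h(\wz)$, supplemented if necessary by a weak-limit argument that exits the gradient class. The class $\K^p$ is tailored precisely to accommodate such limits.

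The first step is to rewrite the hypothesis in variational form. By the Donsker--Varadhan duality,
\begin{align*}
H_\ell(\mu)=\sup_{h\in\sC_b(\bigom_\ell)}E^\mu[h-\log \pr(e^h)]
\end{align*}
for $\mu$ with $\mu_0\ll\P$ (compare \eqref{motivH*}), so the hypothesis $A\ge\sup_{\mu\in\M_1^k}\{E^\mu[f]-H_\ell(\mu)\}$ reads $A\ge\sup_{\mu\in\M_1^k}\inf_{h}E^\mu[f-h+\log \pr(e^h)]$ for every $k$. The second step is a minimax: the map $(\mu,h)\mapsto E^\mu[f-h+\log \pr(e^h)]$ is affine in $\mu$ and convex in $h$, so after truncating $h$ to a ball in $\sC_b$ (to gain compactness) and then removing the truncation, K\"onig's theorem swaps $\sup_\mu$ and $\inf_h$. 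This produces, for each $k$, a bounded $h_k\in\sC_b(\bigom_\ell)$ with $\sup_{\mu\in\M_1^k}E^\mu[f-h_k+\log \pr(e^{h_k})]\le A+\tfrac1k$.

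With $h_k$ in hand, form the gradient corrector $F_k(\wz,z)=h_k(\Sopr_z\wz)-h_k(\wz)$. Boundedness of $h_k$ and $\{T_z\}$-invariance of $\P$ give $F_k\in\K^p$: moments are trivial, while mean-zero and closed-loop follow by telescoping. Moreover
\begin{align*}
\log\sum_z \pr(\wz,\Sopr_z\wz)e^{f(\wz)+F_k(\wz,z)}=f(\wz)-h_k(\wz)+\log \pr(e^{h_k})(\wz),
\end{align*}
so any $\P$-essential-supremum bound on the right-hand side translates directly to a bound on $K_{\ell,F_k}(f)$.

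The main obstacle is that disintegrating the constraint $\mu\in\M_1^k$ only yields
\begin{align*}
\P\text{-}\esssup_\w \E\bigl[\,\sup_{z_{1,\ell}}\{f-h_k+\log \pr(e^{h_k})\}\,\big|\,\kS_k\bigr](\w)\le A+\tfrac1k,
\end{align*}
an essential supremum of a $\kS_k$-conditional expectation, rather than the genuine $\P$-essential supremum required for $K_{\ell,F_k}(f)$. To remove the conditioning I plan to extract a weak $L^p$-subsequential limit (with $p>d$) of the vector $(F_k(\cdot,z))_{z\in\rrange}$, after any normalization or truncation needed to enforce uniform $L^p$-moment bounds, producing a non-gradient limiting corrector $F_*\in\K^p$; class $\K^p$ is closed under such limits because mean-zero and closed-loop pass through weak limits coordinatewise. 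The sublinear growth along paths provided by Lemma \ref{CLASS K} (which is exactly what forces $p>d$), combined with ergodicity of $(\Omega,\P,\{T_z\})$, then absorbs the non-gradient surplus along walk trajectories, upgrading the conditional-expectation bound into the pointwise bound $K_{\ell,F_*}(f)\le A+\e$. Passing this through $\unK_\ell(f)\le K_{\ell,F_*}(f)$ and letting $\e\downarrow 0$ closes the proof.
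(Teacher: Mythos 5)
Your overall architecture (minimax to extract $h_k$, then pass to a weak $L^p$ limit of regularized gradients to obtain a corrector in $\K^p$) matches the paper's route, and you have correctly identified the central obstacle --- that disintegration only delivers an essential supremum of a $\kS_k$-conditional expectation rather than a genuine pointwise bound. But the resolution you propose for that obstacle is not the right tool and would not close the argument.

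First, your plan forms the raw gradient $F_k(\wz,z)=h_k(\Sopr_z\wz)-h_k(\wz)$ and then hopes to get uniform $L^p$ bounds "after any normalization or truncation needed." There is no normalization available: the $h_k$ are not uniformly bounded, and the raw increments need not be tight in $L^p$. The paper's key device is to instead set
\[F_{k,\e}(\wz,z)=\E\bigl[h_{k,\e}(\Sopr_z\wz)-h_{k,\e}(\wz)\,\big|\,\kS_{k-1}\bigr],\]
i.e.\ to condition on the finite $\sigma$-algebra one step back. This is what makes uniform $L^p$ bounds provable (Lemma~\ref{F-lemma}), and the bound \eqref{F-bound} controls $F_{k,\e}$ from above by $C-\E[\log\pr\,|\,\kS_{k-1}]$, while the matching lower bound is obtained by going around a loop using the ellipticity hypothesis \eqref{RWRE-elliptic}. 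Your proof never invokes ellipticity at this stage, yet it is essential: without it one cannot get a two-sided $L^p$ bound on $F_{k,\e}$. (The exponent $p>d$ plays no role in obtaining these uniform bounds.)

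Second, the step you describe as using Lemma~\ref{CLASS K} ("sublinear growth along walk trajectories... absorbs the non-gradient surplus") is misattributed. Lemma~\ref{CLASS K} is what makes $\K^p$ a legal class of correctors for the separate upper-bound Lemma~\ref{lm-new-upper}; it does not enter the proof of Lemma~\ref{A>K} at all, and there are no walk trajectories in this lemma. What actually upgrades the $\kS_k$-conditional bound to a pointwise bound is: (i) the infinite-dimensional Jensen inequality (Lemma~\ref{infinite-Jensen}) to pull the conditional expectation inside the log-sum-exp and produce \eqref{F-bound}; (ii) the observation that $M_k=\E[f+\log\pr\,|\,\kS_{k-1}]$ is a uniformly $L^p$-bounded martingale converging a.s.\ to $f+\log\pr$; (iii) Mazur's theorem, to convert the weakly convergent sequence $M_k+F_{k,\e}$ into a sequence of convex combinations $g_{k,\e}$ converging strongly in $L^p$, hence a.s.\ along a subsequence; and (iv) convexity of $x\mapsto e^x$ and ordinary Jensen, so that the pointwise bound $e^{A+\e}\ge\sum_z e^{M_k+F_{k,\e}}$ extends to the convex combinations and passes to the a.s.\ limit. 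Your sketch omits all of this and substitutes an appeal to a lemma that is not applicable here; as written, the gap between the conditional-expectation bound and $K_{\ell,F_*}(f)\le A+\e$ is not bridged.

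A smaller issue: your minimax step (truncate $h$ to a ball, apply K\"onig, then remove the truncation) would need justification that the truncated infimum converges to the untruncated one; the paper avoids this by passing to the two-marginal space $\M_1^{k,2}$, which is weakly compact, so K\"onig applies directly with no truncation.
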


\begin{proof}[Proof of Lemma \ref{A>K}]
Let $\M_1^{k,2}$ be the set of probability measures $\alpha$ on $\bigom_\ell^2$ such that 
the first $\bigom_\ell$-marginal $\alpha_1\in\M_1^k$.  
Observe next that if $\alpha\in\M_1(\bigom_\ell^2)$ is such that $\alpha_1\ne\alpha_2$, then
 	\[\inf_{h\in\sC_b(\bigom_\ell)}\{E^{\alpha_2}[h]-E^{\alpha_1}[h]\}=-\infty.\]
Write
\begin{align*}%\label{minimax:page}
	A&\ge\sup_{\mu\in\M_1^k}\Big\{E^\mu[f]-\inf\{H(\mu\times q\,|\,\mu\times\pr):\mu q=\mu\}\Big\}\\
		&=\sup\Big\{E^{\alpha_1}[f]-H(\alpha\,|\,\alpha_1\times\pr):\alpha\in\M_1^{k,2},\alpha_1=\alpha_2\Big\}\nn\\
		%&=\sup_{\alpha:(\alpha_1)_0\ll\P}\inf_h\Big\{E^{\alpha_1}[f]+E^{\alpha_2}[h]-E^{\alpha_1}[h]-H(\alpha\,|\,\alpha_1\times\pr)\Big\}\nn\\
		&\ge \sup_{\alpha\in\M_1^{k,2}}\inf_{h\in\sC_b(\bigom_\ell)} \Big\{  E^{\alpha_1}[f]+E^{\alpha_2}[h]-E^{\alpha_1}[h]
		-H(\alpha\,\vert\,\alpha_1\times \pr)\Big\}.\nn
\end{align*}

Since the quantity in braces is linear (and hence continuous and convex) in $h$ and concave and upper semicontinuous in $\alpha$,
and since $\M_1^{k,2}$ is compact, we can apply K\"onig's minimax theorem; see \citep{kass-94}. 
%Observe that $\M_1^{k,2}$ is compact. 
%
%Proof: Suppose  $\cM_k^{(2)}\ni\alpha^{(j)}\to \alpha$. 
%There are constants $c^j_i\ge 0$ such that  $\sum_i c^j_i=1$ and 
%  for $g\in \sC_b(\Omega)$
%\[   E^{\alpha^{(j)}}[g]=\sum_i c^j_i \,\E[g\,|A_{k,i}] \]
%where $\{A_{k,i}\}$ is the finite set of atoms of $\kS_k$.  Along a subsequence
%$c^j_i\to c_i$ such that $\sum_i c_i=1$.  Then along this subsequence
%\[ 
%E^\alpha[g]=\lim_{j\to\infty}  E^{\alpha^{(j)}}[g] =\lim_{j\to\infty} \sum_i c^j_i\, \E[g\,|A_{k,i}]
%=\sum_i c_i\, \E[g\,|A_{k,i}]  \]
%so the limit measure satisfies 
%\[  \frac{d(\alpha_1)_0}{d\P} (\w)= \sum_i \frac{c_i}{\P(A_{k,i})}  \one_{A_{k,i}}(\w) . \]
Then 
%\begin{align*}
%	A&\ge\inf_{h\in\sC_b(\bigom_\ell)}\sup_{\alpha\in\cM_1^{k,2}} \Big\{ E^{\alpha_1}[f]+E^{\alpha_2}[h]-E^{\alpha_1}[h]
%		-H(\alpha\,\vert\,\alpha_1\times \pr)\Big\} \\
%		&=\inf_{h\in\sC_b(\bigom_\ell)}\sup_{\mu\in\cM_1^k} \sup_{q\in\MC(\bigom_\ell)} \int  \Big[ f(\wz)+ qh(\wz)-h(\wz)
%			-H(q(\wz,\cdot)\,|\, \pr(\wz,\cdot))\Big]\,\mu(d\wz)  \\
%		&=\inf_{h\in\sC_b(\bigom_\ell)}\sup_{\mu\in\cM_1^k}  E^\mu[ f -h + \log \pr(e^h) ]\\
%		&\ge\inf_{h\in\sC_b(\bigom_\ell)}\sup_{z_{1,\ell}\in\range^\ell}\sup\Big\{E^\mu[ f -h + \log \pr(e^h) ]:\\
%		&\qquad\qquad\qquad\qquad\qquad\qquad\mu=\mu_0\otimes\delta_{z_{1,\ell}}\text{ and }\frac{d\mu_0}{d\P}\text{ is $\kS_k$ measurable}\Big\}.
%\end{align*}
\begin{align*}
	A&\ge\inf_{h\in\sC_b(\bigom_\ell)}\sup_{\alpha\in\cM_1^{k,2}} \Big\{ E^{\alpha_1}[f]+E^{\alpha_2}[h]-E^{\alpha_1}[h]
		-H(\alpha\,\vert\,\alpha_1\times \pr)\Big\} \\
		&=\inf_{h\in\sC_b(\bigom_\ell)}\sup_{\mu\in\cM_1^k} \sup_{q\in\MC(\bigom_\ell)} \int  \Big[ f(\wz)+ qh(\wz)-h(\wz)
			-H(q(\wz,\cdot)\,|\, \pr(\wz,\cdot))\Big]\,\mu(d\wz)  \\
		&=\inf_{h\in\sC_b(\bigom_\ell)}\sup_{\mu\in\cM_1^k}  E^\mu[ f -h + \log \pr(e^h) ]\\
		&\ge\inf_{h\in\sC_b(\bigom_\ell)}\sup_{z_{1,\ell}\in\range^\ell}\sup\Big\{E^\mu[ f -h + \log \pr(e^h) ]:
		\mu=\mu_0\otimes\delta_{z_{1,\ell}}\text{ and }\frac{d\mu_0}{d\P}\text{ is $\kS_k$ measurable}\Big\}.
\end{align*}
In the last equality above we passed the sup under the integral, since the integrand is a function of $q(\wz,\cdot)$ and one
can maximize for each $\wz$ separately.  Then we used the variational characterization of relative entropy; see 
Lemma 10.1 in \citep{vara-ldp} or Theorem 6.7 in \citep{rass-sepp-ldp}.
We thus have
\begin{align*}
		A&\ge\inf_{h\in\sC_b(\bigom_\ell)}\sup_{z_{1,\ell}\in\range^\ell}  \P\text{-}\esssup_\w \E[ f -h + \log \pr(e^h) \,|\, \kS_k ]\\
			&=\inf_{h\in\sC_b(\bigom_\ell)}\sup_{z_{1,\ell}\in\range^\ell}  \P\text{-}\esssup_\w \E\Big[  \log\sum_z \pr(\wz, \Sopr_z\wz) 
e^{f(\wz)-h(\wz)+h(\Sopr_z\wz) }  \,\Big|\, \kS_k \Big]. 
\end{align*}
%{\bf ** maybe we want conditioning to stay down, like here? **}

Let $\nu\in\M_1(\range)$ with $\nu(z)>0$ for all $z\in\range$. Write the last conditional expectation as 
\[   \E\Bigl[  \log\sum_z \nu(z) 
\exp\Bigl\{\log [\nu(z)^{-1} \pr(\wz, \Sopr_z\wz)]  + f(\wz)-h(\wz)+h(\Sopr_z\wz) \Bigr\}  \,\Big|\, \kS_k \Bigr].  \]
An application of an infinite-dimensional version of Jensen's inequality (see Lemma \ref{infinite-Jensen}) and cancelling the $\nu(z)$-factors gives
\[A\ge \inf_{h\in\sC_b(\bigom_\ell)} \sup_{z_{1,\ell}\in\range^\ell} \P\text{-}\esssup_\w \Bigl\{ \log \sum_z 
e^{\E[ \log \pr(\wz, \Sopr_z\wz)  + f(\wz)-h(\wz)+h(\Sopr_z\wz)\,|\, \kS_k]} \Bigr\}.\]

The above means that for $\e>0$ and $k\ge1$ there exists a bounded continuous
function $h_{k,\e}$ such that for all $z_{1,\ell}\in\range^\ell$ and $\P$-a.s.
\begin{align}
\label{F-bound}
A+\e\ge
 \log\sum_z e^{\E[f(\wz)+\log \pr(\wz,\Sopr_z\wz)-h_{k,\e}(\wz)+h_{k,\e}(\Sopr_z\wz)\,|\,\kS_k]}\,.
 \end{align}

Next, we show that the sequence 
\begin{align}
\label{F}
F_{k,\e}(\wz,z)=\E[h_{k,\e}(\Sopr_z\wz)-h_{k,\e}(\wz)\,|\,\kS_{k-1}]
\end{align}
is uniformly bounded in $L^p(\P)$, for any fixed $z_{1,\ell}$ and $z$. 
Hence, along a subsequence, $F_{k,\e}$ converges in the $L^p(\P)$ weak topology 
to some $F_\e\in L^p(\P)$. We can in fact use the same
subsequence for all $z_{1,\ell}$ and $z$.
We will still call this subsequence $(F_{k,\e})$.
One can also directly check that $F_\e\in\cK^p(\bigom_\ell\times\range)$. 
In order not to interrupt the flow we postpone the proof of these two facts to 
Lemma \ref{F-lemma} below. 

On the other hand, 
\[M_k(\wz,z)=\E[f(\wz)+\log \pr(\wz,\Sopr_z\wz)\,|\,\kS_{k-1}]\] 
is a martingale whose $L^{p}(\P)$-norm is uniformly bounded. It thus
converges in $L^p(\P)$ (as well as almost-surely) 
to $f(\wz)+\log \pr(\wz,\Sopr_z\wz)$, for all $z_{1,\ell}$ and $z$. 
Thus, by Theorem 3.13 of \citep{rudi-func-anal-91}, for each fixed $z_{1,\ell}$ and $z$,
there exists a sequence of random variables $g_{k,\e}(\wz,z)$ that converges 
strongly in $L^p$ (and thus a subsequence converges
$\P$-a.s.) to 
$f(\wz)+\log \pr(\wz,\Sopr_z\wz)+F_\e(\wz,z)$ and such that 
$g_{k,\e}$ is a convex combination of
$\{M_j+F_{j,\e}:j\le k\}$. 
One can then extract a further subsequence that converges
$\P$-a.s.\ for all $z_{1,\ell}$ and $z$. 
%One can in fact achieve this with one sequence,
%for all $\zzl\in\range^{2\ell}$ and $z\in\range$ simultaneously.

By Jensen's inequality, 
we have for all $z_{1,\ell}\in\range^\ell$ and $\P$-a.s.
%\begin{align*}
%e^{A+\e}
%&\ge\sum_{z\in\range}\E\Big[e^{\E[f(\wz)+\log\pr(\wz,\Sopr_z\wz)-h_{k,\e}(\wz)+h_{k,\e}(\Sopr_z\wz)|\kS_k]}\,\Big|\,\kS_{k-1}\Big]\\
%&\ge \sum_{z\in\range}e^{M_k(\wz,z)+F_{k,\e}(\wz,z)}.
%\end{align*}
\begin{align*}
e^{A+\e}
&\ge\sum_{z\in\range}\E\Big[e^{\E[f(\wz)+\log\pr(\wz,\Sopr_z\wz)-h_{k,\e}(\wz)+h_{k,\e}(\Sopr_z\wz)|\kS_k]}\,\Big|\,\kS_{k-1}\Big]
\ge \sum_{z\in\range}e^{M_k(\wz,z)+F_{k,\e}(\wz,z)}.
\end{align*}
Since this is valid for all $k\ge1$, another application of Jensen's inequality gives
\begin{align*}
e^{A+\e}\ge \sum_{z\in\range}e^{g_{k,\e}(\wz,z)}.
\end{align*}
Taking $k\to\infty$ implies, for $\P$-a.e.\ $\w$ and all $z_{1,\ell}\in\range^\ell$,
\[A+\e\ge f(\wz)+\log\sum_{z\in\range}\pr(\wz,\Sopr_z\wz)e^{F_\e(\wz,z)}\]
and thus
\[A+\e\ge \inf_{F\in\K^p(\bigom_\ell\times\range)}
\sup_{z_{1,\ell}\in\range^\ell}\P\text{-}\esssup_\wz\Big\{f(\wz)+\log\sum_{z\in\range}\pr(\wz,\Sopr_z\wz)e^{F(\wz,z)}\Big\}.\]
Taking $\e\to0$ proves that $A\ge\unK_\ell$. 
\end{proof}

\begin{lemma}\label{F-lemma}
Assume $(\Omega,\kS,\P,\{T_z\})$ is ergodic. Assume $\P$ satisfies assumptions \eqref{RWRE-compact}, 
\eqref{RWRE-elliptic}, and \eqref{RWRE-moment}.
%moment assumption \eqref{RWRE-moment}. Let $F_{k,\e}$ be defined by \eqref{F}. 
%Same assumptions as in Theorem \ref{MC-qldp-th}.
Then, for $\e>0$, $z_{1,\ell}\in\range^\ell$, and $z\in\range$, 
\begin{align}\label{unif-Lp}
\sup_k\E[|F_{k,\e}(\w,z_{1,\ell},z)|^p]<\infty.
\end{align}
Moreover, if a subsequence converges {\rm(}in weak $L^p(\P)$-topology{\rm)},
for each $z_{1,\ell}$ and $z$, to a limit $F_\e$, then $F_\e$ belongs to class $\K^p(\bigom_\ell\times\range)$.
\end{lemma}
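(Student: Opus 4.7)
My plan proceeds in two parts corresponding to the two assertions.

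For the uniform $L^p$ bound \eqref{unif-Lp}, the starting point is \eqref{F-bound}. Since $\log\sum_z e^{(\cdot)}$ dominates each individual exponent, for every $z\in\range$
$$A+\e\ge \E[f(\wz)+\log \pi_{0,z}(T_{x_\ell}\w)-h_{k,\e}(\wz)+h_{k,\e}(\Sopr_z\wz)\,|\,\kS_k].$$
Taking another conditional expectation with respect to $\kS_{k-1}\subset\kS_k$ and using the tower property yields
$$F_{k,\e}(\wz,z)\le A+\e-\E[f(\wz)\,|\,\kS_{k-1}]-\E[\log \pi_{0,z}(T_{x_\ell}\w)\,|\,\kS_{k-1}],$$
whose $L^p$ norm is uniformly bounded in $k$ by boundedness of $f$, $T$-invariance of $\P$, and \eqref{RWRE-moment}. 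For the negative part I would invoke ellipticity \eqref{RWRE-elliptic} together with \eqref{RWRE-span} to build a closed loop in $\bigom_\ell$ that starts with the step $z$ and returns to $(\w,z_{1,\ell})$ via $u_1,\dots,u_n\in\range$ with $z_1+\cdots+z_\ell+z+u_1+\cdots+u_{n-\ell}=0$ and $(u_{n-\ell+1},\dots,u_n)=z_{1,\ell}$. The pointwise telescoping $\sum_i[h_{k,\e}(\wz_{i+1})-h_{k,\e}(\wz_i)]=0$ around this loop, after conditioning on $\kS_{k-1}$, expresses $-F_{k,\e}(\wz,z)$ as a finite sum of conditional increments; applying \eqref{F-bound} at the successive base points along the loop and invoking $T$-invariance bounds each summand in $L^p$ uniformly in $k$.

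Given a weak-$L^p$ subsequential limit $F_\e$, I would verify the three conditions defining $\K^p(\bigom_\ell\times\range)$. Condition (i) (moment) follows from lower semicontinuity of the $L^p$ norm under weak convergence together with the uniform bound just established. For condition (ii) (mean zero), a direct computation exploiting $T$-invariance of $\P$ yields
$$\E_\w[F_{k,\e}(\w,z^{(i)}_{1,\ell},a_{i+1})]=\bar h_{k,\e}(z^{(i+1)}_{1,\ell})-\bar h_{k,\e}(z^{(i)}_{1,\ell}),\qquad \bar h_{k,\e}(z^*):=\E_\w[h_{k,\e}(\w,z^*)],$$
so summing over any path whose initial and terminal $z_{1,\ell}$ components agree telescopes to zero; this linear identity is preserved under weak $L^p$ limits.

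The closed-loop condition (iii) is the principal obstacle and requires a finer argument. My plan is to decompose
$$F_{k,\e}(\wz,z)=\bigl[\tilde h_{k,\e}(\Sopr_z\wz)-\tilde h_{k,\e}(\wz)\bigr]+R_{k,\e}(\wz,z),\qquad \tilde h_{k,\e}(\wz):=\E[h_{k,\e}(\wz)\,|\,\kS_{k-1}].$$
Using the identity $\E[g\circ T_{z_1}\,|\,\cG](\w)=\E[g\,|\,T_{z_1}\cG](T_{z_1}\w)$, the residue $R_{k,\e}$ becomes the difference of conditional expectations of the same function with respect to $T_{z_1}\kS_{k-1}$ and $\kS_{k-1}$; since both sigma-algebras increase to $\kS$, a martingale-convergence argument should force $R_{k,\e}\to 0$. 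Consequently $F_\e$ is the weak limit of pure gradients $\tilde h_{k,\e}(\Sopr_z\wz)-\tilde h_{k,\e}(\wz)$, for which summing around a closed path vanishes identically by pointwise telescoping, and this identity passes to the weak limit. The technical difficulty, which is where I expect the real work to lie, is producing the $L^p$ vanishing of $R_{k,\e}$ uniformly despite $h_{k,\e}$ varying with $k$; compactness of $\Omega$ and \eqref{F-bound} applied at shifted base points should supply the needed a priori control on $\tilde h_{k,\e}$ along the subsequence.
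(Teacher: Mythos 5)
Your proof of \eqref{unif-Lp} and of conditions (i)--(ii) of Definition \ref{cK-def} is correct and coincides with the paper's: the upper bound on $F_{k,\e}$ comes directly from \eqref{F-bound} and boundedness of $f$; the lower bound comes from closing a loop in $\bigom_\ell$ via ellipticity, telescoping the $h_{k,\e}$-increments, and applying \eqref{F-bound} at shifted base points; the moment bound passes to the weak limit by lower semicontinuity; and the mean-zero telescope $\E[F_{k,\e}(\w,z^{(i)}_{1,\ell},a_{i+1})]=\bar h_{k,\e}(z^{(i+1)}_{1,\ell})-\bar h_{k,\e}(z^{(i)}_{1,\ell})$ uses $T$-invariance exactly as the paper does.

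The closed-loop condition (iii) is where your proposal has a genuine gap, and you are right to flag it. Unwinding your shift identity, the residue equals
\[
R_{k,\e}(\wz,z)=\bigl(\E[h_{k,\e}(\cdot, z_{2,\ell}, z)\,|\,T_{z_1}\kS_{k-1}] - \E[h_{k,\e}(\cdot, z_{2,\ell}, z)\,|\,\kS_{k-1}]\bigr)(T_{z_1}\w),
\]
and there is no reason for it to vanish, even weakly. Pairing with $\phi\in L^q(\P)$ ($1/p+1/q=1$) and shifting by $T$-invariance converts $\E[\phi R_{k,\e}]$ into $\E\bigl[\bigl(\E[\phi'\,|\,T_{z_1}\kS_{k-1}]-\E[\phi'\,|\,\kS_{k-1}]\bigr)h_{k,\e}\bigr]$ for a shifted $\phi'\in L^q$; the first factor does tend to $0$ in $L^q$ by martingale convergence, but it multiplies $h_{k,\e}$, whose $L^p$ norm has \emph{no} $k$-uniform bound. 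The only a priori control, \eqref{F-bound}, constrains gradient-type combinations of $h_{k,\e}$, not $h_{k,\e}$ itself, and compactness of $\Omega$ does not help since $h_{k,\e}$ is a different (arbitrarily large) bounded continuous function at each $k$. The paper sidesteps the residue completely by conditioning the loop sum on a \emph{fixed finite} $\kS_j$. Using precisely your shift identity, $F_{k,\e}(\wz_i,a_{i+1})=\E[h_{k,\e}(\wz_{i+1})-h_{k,\e}(\wz_i)\,|\,T_{-y_i}\kS_{k-1}]$; since $T_y\kS_m\subset\kS_{m+1}$ for $y\in\range$, one has $\kS_j\subset T_{-y_i}\kS_{k-1}$ for all $i$ along a path once $k$ is large relative to $j$ and the path length, so the tower property makes the $\kS_j$-conditional sum telescope exactly to $\E[h_{k,\e}(\wz_n)-h_{k,\e}(\wz_0)\,|\,\kS_j]$, a function of the endpoints only. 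The essential point is then that weak $L^p(\P)$ convergence combined with the \emph{finiteness} of $\kS_j$ upgrades the weak convergence $F_{k,\e}\to F_\e$ to almost-sure convergence $\E[F_{k,\e}\,|\,\kS_j]\to\E[F_\e\,|\,\kS_j]$; letting $k\to\infty$ yields the $\kS_j$-conditional closed-loop identity for $F_\e$ for each fixed $j$, and finally $j\to\infty$ via ordinary martingale convergence (of the now-fixed $F_\e$) removes the conditioning. Your gradient decomposition, even if $R_{k,\e}$ did vanish, would still need this $\kS_j$-conditioning device to pass the pointwise telescope to the weak limit, so the decomposition buys nothing and the paper's route is the simpler one.
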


\begin{proof}
Since $f$ is bounded \eqref{F-bound} implies that for $\P$-a.e.\ $\w$ and for all $z_{1,\ell}\in\range^\ell$
	\[F_{k,\e}(\wz,z)\le C-\E[\log\pr(\wz,\Sopr_z\wz)\,|\,\kS_{k-1}].\]
The $L^p(\P)$-norm of the right-hand-side is bounded by $C+\E[|\log\pi_{0,z}|^p]$, which is finite by assumption \eqref{RWRE-moment}.

%{\bf ** Assume that $\range$ allows loops from any $z\in\range$ back to 0. I think that when this does
%not hold, we have a forbidden direction! **}
By assumption \eqref{RWRE-elliptic}, there exist $a_1,\dotsc,a_m\in\range$  such that $x_\ell+z+a_1+\cdots+a_{m-\ell}=0$ and $a_{m-\ell+1,m}=z_{1,\ell}$. 
Then, letting $\wz_0=\Sopr_z\wz$ and
$\wz_{i+1}=\Sopr_{a_{i+1}}\wz_i$ and defining $(y_i,z^i_{1,\ell})$ such that $\wz_i=(T_{y_{i}}\w,z^{i}_{1,\ell})$,
 $0\le i\le m-1$, we have
 %g=\E[f T_x|\kS] means \int_A g=\int_A f T_x=\int_{T_x A} f =\int_{T_x A} E[f | T_x\kS] =\int_A E[f | T_x\kS] T_x 
 %so \E[f T_x | \kS]=\E[f | T_x\kS] T_x
 	\begin{align*}
	&\sum_{i=0}^{m-1}\E[h_{k,\e}(\Sopr_{a_{i+1}}\wz_i)-h_{k,\e}(\wz_i)\,|\,T_{-y_i}\kS_k]\\
	&\qquad=\sum_{i=0}^{m-1}\E[h_{k,\e}(T_{y_i+z_1^i}\w,\Sopr_{a_{i+1}}z^i_{1,\ell})-h_{k,\e}(T_{y_i}\w,z^i_{1,\ell})\,|\,T_{-y_i}\kS_k]\\
	&\qquad=\sum_{i=0}^{m-1} \E[h_{k,\e}(T_{z_1^i}\w,\Sopr_{a_{i+1}}z^i_{1,\ell})-h_{k,\e}(\w,z^i_{1,\ell})\,|\,\kS_k]\circ T_{y_i}\\
	&\qquad=\sum_{i=0}^{m-1} \E[h_{k,\e}(\Sopr_{a_{i+1}}(\w,z^i_{1,\ell}))-h_{k,\e}(\w,z^i_{1,\ell})\,|\,\kS_k]\circ T_{y_i}\\
	&\qquad\le C\,m-\sum_{i=0}^{m-1} \E[\log\pr((\w,z^i_{1,\ell}),\Sopr_{a_{i+1}}(\w,z^i_{1,\ell}))\,|\,\kS_k]\circ T_{y_i}.
	\end{align*}
The last inequality is a result of \eqref{F-bound}. Taking conditional expectations given $\kS_{k-1}$ one has
	\begin{align*}
	-F_{k,\e}(\wz,z)&=\E[h_{k,\e}(\wz)-h_{k,\e}(\Sopr_z\wz)\,|\,\kS_{k-1}]\\
	&=\sum_{i=0}^{m-1}\E[h_{k,\e}(\Sopr_{a_{i+1}}\wz_i)-h_{k,\e}(\wz_i)\,|\,\kS_{k-1}]\\
	&=\sum_{i=0}^{m-1}\E\Big[\E[h_{k,\e}(\Sopr_{a_{i+1}}\wz_i)-h_{k,\e}(\wz_i)\,|\,T_{-y_i}\kS_k]\,\Big|\,\kS_{k-1}\Big]\\
	&\le C\,m-\sum_{i=0}^{m-1} \E\Big[\E[\log\pr((\w,z^i_{1,\ell}),\Sopr_{a_{i+1}}(\w,z^i_{1,\ell}))\,|\,\kS_k]\circ T_{y_i}\,\Big|\,\kS_{k-1}\Big].
	\end{align*}
The $L^p(\P)$-norm of the right-hand-side is  bounded by $(C+\E[|\log\pi_{0,z}|^p])m$, which is finite by assumption \eqref{RWRE-moment}.

Consider next a weakly convergent subsequence. We will still
denote it by $F_{k,\e}$. Let $F_\e$ be its limit. 
Clearly, $F_\e\in L^p(\P)$ and the moment condition (i) in Definition \ref{cK-def} is satisfied. 
%if f_n\in L^p converges weakly to f, doesn't that imply f\in L^p?
%yes, by the duality.  \int f g =\lim \int f_n g <\infty for all g\in L^q.
%(that's what is meant by weakly convergent, no?)
Also, since the mean zero property  (ii), in Definition \ref{cK-def}, is satisfied for each $F_{k,\e}$, it is satisfied for $F_\e$.

Furthermore, weak convergence in $L^p(\P)$ and finiteness of the $\si$-algebras $\kS_j$ imply that for any fixed $j$,
$\E[F_{k,\e}\,|\,\kS_j]$ converges to $\E[F_\e\,|\,\kS_j]$ for every $z_{1,\ell}\in\range^\ell$ and
$\P$-a.e.\ $\w$. Since the closed loop property holds for every $F_{k,\e}$, we have that for any two paths $\{\wz_i\}_{i=0}^n$ and $\{\bar\wz_j\}_{j=0}^m$ 
as in (iii) of Definition \ref{cK-def},
\begin{align*}
&\E\Big[\sum_{i=0}^{n-1} F_\e(\wz_i,a_{i+1})\,\Big|\,\kS_j\Big]
=\E\Big[\sum_{j=0}^{m-1} F_\e(\bar\wz_j,\bar a_{j+1})\,\Big|\,\kS_j\Big].
\end{align*}
Taking $j\to\infty$ and using the martingale convergence theorem proves the closed loop property holds for $F_\e$.
\end{proof}

The proof of Theorems \ref{H*=unK} and  \ref{MC-qldp-th} is thus complete.
\end{proof}

\section{Proof of Theorem \ref{main}}\label{proof-section}
We will now present the proof of the main theorem.
Note first that for all $k\ge0$ and $\P$-a.e.\ $\w$, \[P_0^\w\{\Sopr(T_{X_k}\w,Z_{k+1,\infty})=(T_{X_{k+1}}\w,Z_{k+2,\infty})\}=1.\]
Thus, the empirical measure $R_n^{1,\infty}$ comes deterministically close to the set of $\Sopr$-invariant measures
and every non-$\Sopr$-invariant measure has a neighborhood that has zero probability for all large enough $n$.
Since the set of such measures is open and function $H$ in Theorem \ref{main} is infinite on it, we need not be concerned with them.

Recall definitions \eqref{Helldef} of $H_\ell$ %:\M_1(\bigom_\ell)\to[0,\infty]$ 
and \eqref{Hdef} of $H$. %:\M_1(\bigom_+)\to[0,\infty]$.
%	\begin{align*}%\label{Hell}
%		H_\ell(\mu)=
%			\begin{cases}
%				\inf\{H(\mu\times q\,|\,\mu\times \pr):q\in\MC(\bigom_\ell)\text{ with }\mu q=\mu\}&\text{if }\mu_0\ll\P,\\
%				\infty&\text{otherwise.}
%			\end{cases}
%	\end{align*}
%
Now, Lemma \ref{transfer of qldp} and Theorem \ref{MC-qldp-th} imply that an almost-sure level 3 large deviation principle 
holds with rate function $\sup_{\ell\ge1}H_\ell^{**}$.
%also \mu not \Sopr-invariant implies \sup_{\ell\ge1}H_\ell=\infty (using DV formula) and since \Sopr^+-inv is closed, (\sup H_\ell)^{**}=\infty too
It remains to identify this rate function with the one in the statement of Theorem \ref{main}. 
This is shown in the next lemma.

	\begin{lemma}\label{conjugacy}
	Assume $\P$ is invariant for the shifts $\{T_z\}$ and satisfies assumptions \eqref{RWRE-regular} and \eqref{RWRE-span}.
	%Same assumptions as in Theorem \ref{main}.
	If $\mu\in\M_1(\bigom_+)$ is $\Sopr$-invariant, then
		\begin{align}\label{claim1}
		\sup_{\ell\ge1}H_\ell(\mu_{|\bigom_\ell})=H(\mu).
		%H^{**}(\mu)\le\unK_\ell^*(\mu),
		\end{align}
	%where $\unK^{(\ell)}$ was defined in \eqref{K-def} (and we added the superscript $\ell$ for clarity).
	In particular, $H$ is convex. If, furthermore, 
	%%%OLD-begin-1
	%the conclusion of Lemma \ref{A>K} holds then 
	%%%OLD-end-1
	%%%NEW-begin-1
	the compactness assumption \eqref{RWRE-compact} holds then 
	%%%NEW-end-1
		\begin{align}\label{claim2}
		\sup_{\ell\ge1}H_\ell^{**}(\mu_{|\bigom_\ell})=H^{**}(\mu).
		%H^{**}(\mu)\le\unK_\ell^*(\mu),
		\end{align}
	\end{lemma}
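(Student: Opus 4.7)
The strategy rests on the identity
\[ H_\ell(\nu) = \inf\{H(\mu) : \mu \in \M_1(\bigom_+)\text{ is }\Sopr\text{-invariant with }\mu_\ell = \nu\}, \]
which I would establish first.  The direction ``$\ge$'' comes from the observation that any $q\in\MC(\bigom_\ell)$ with $\nu q=\nu$ is the level-$\ell$ one-step kernel of an $\Sopr$-invariant, Markov-at-level-$\ell$ process $\nu^q$ on $\bigom_+$ with $\nu^q_\ell=\nu$, whose specific entropy equals $H(\nu\times q\,|\,\nu\times\pr)$ by the specific-entropy lemma (the unlabeled lemma preceding Theorem \ref{main}).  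For ``$\le$'', given any $\Sopr$-invariant $\mu$ with $\mu_\ell=\nu$, the canonical kernel $q^*_\ell(\wz,\Sopr_z\wz):=\bar\mu\{Z_{\ell+1}=z\,|\,\w,Z_{1,\ell}\}$ satisfies $\nu q^*_\ell=\nu$ by $\Soplr$-invariance of $\bar\mu$, and a chain-rule computation combined with \eqref{entraux1} identifies $H(\nu\times q^*_\ell\,|\,\nu\times\pr)$ with the $(\ell+1)$-th term $H_{\cG_{-\ell+1,1}}(\mu_-\times q_\mu\,|\,\mu_-\times\pl)$ of the telescoping sum, which is bounded above by $H(\mu)$ by \eqref{increasing entropy}.

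To prove \eqref{claim1} (the case $\mu_0\not\ll\P$ being trivial, since $(\mu_\ell)_0=\mu_0$), I specialize the identity to $\nu=\mu_\ell$.  The ``$\le$'' direction gives the upper sandwich bound $H_\ell(\mu_\ell)\le H_{\cG_{-\ell+1,1}}(\mu_-\times q_\mu\,|\,\mu_-\times\pl)$.  For the matching lower bound, I restrict the relative entropy inside the ``$\ge$'' direction to the smaller $\sigma$-algebra $\cG_{-\ell+2,1}$: under $\Soplr$-invariance of $\bar{\mu'}$, the $\cG_{-\ell+2,1}$-marginals of both $\mu'_-\times q_{\mu'}$ and $\mu'_-\times\pl$ depend on $\mu'$ only through $\mu'_\ell=\mu_\ell$ (here the fact that $\pl$ uses only the current environment plus a single new step is crucial), so $H(\mu')\ge H_{\cG_{-\ell+2,1}}(\mu_-\times q_\mu\,|\,\mu_-\times\pl)$ for any such $\mu'$, and taking the infimum yields $H_\ell(\mu_\ell)\ge H_{\cG_{-\ell+2,1}}(\mu_-\times q_\mu\,|\,\mu_-\times\pl)$.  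Both sandwich bounds converge to $H(\mu)$ as $\ell\to\infty$ by \eqref{increasing entropy}, so \eqref{claim1} follows.  Convexity of $H$ is then inherited from convexity of each $H_\ell$ (Section \ref{lower-qldp-section}) together with the pointwise supremum.

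For \eqref{claim2}, set $J(\mu):=\sup_\ell H_\ell^{**}(\mu_\ell)$.  Since each $H_\ell^{**}\circ\pi_\ell$ is convex and lower semicontinuous on $\M_1(\bigom_+)$ (the projection $\mu\mapsto\mu_\ell$ being continuous and affine), so is $J$; combined with $J\le H$ from \eqref{claim1} and $H_\ell^{**}\le H_\ell$, this gives $J\le H^{**}$.  For the reverse, the unifying identity above yields $H^*(f)=H_\ell^*(f)$ for every cylinder $f\in\sC_b(\bigom_\ell)\subset\sC_b(\bigom_+)$.  Under compactness \eqref{RWRE-compact}, the product space $\bigom_+$ is compact, so Stone--Weierstrass makes cylinder functions uniformly dense in $\sC(\bigom_+)$, while $H^*$ is $1$-Lipschitz in uniform norm ($|H^*(f_1)-H^*(f_2)|\le\|f_1-f_2\|_\infty$); hence the supremum in $H^{**}(\mu)=\sup_f\{E^\mu[f]-H^*(f)\}$ is unchanged if restricted to cylinder $f$, giving $H^{**}(\mu)=\sup_\ell H_\ell^{**}(\mu_\ell)=J(\mu)$.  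The hard part will be the bookkeeping underlying the sandwich in Part 1: verifying that $H_{\cG_{-\ell+2,1}}(\mu_-\times q_\mu\,|\,\mu_-\times\pl)$ depends on $\mu$ only through $\mu_\ell$, which demands careful use of $\Soplr$-invariance of the two-sided extension $\bar\mu$ together with the specific Markov form of $\pl$.
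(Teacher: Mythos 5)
Your proposal is correct, but it follows a genuinely different route from the paper's on both halves of the lemma, so let me compare. For \eqref{claim1}, you first prove the identity $H_\ell(\nu)=\inf\{H(\mu'):\mu' \ \Sopr\text{-invariant},\ \mu'_{|\bigom_\ell}=\nu\}$ directly (Markov extension $\nu^q$ for ``$\ge$'', canonical conditional kernel $q^*_\ell$ for ``$\le$''), then sandwich $H_\ell(\mu_{|\bigom_\ell})$ between the $\ell$-th and $(\ell+1)$-th conditional-entropy terms of the telescoping sum \eqref{entraux1} and invoke \eqref{increasing entropy}. The paper instead picks near-optimal kernels $q_-^{(\ell)}$, extracts a weak limit $Q\in\M_1(\bigom_-^2)$ by tightness and a diagonal argument, proves $Q$ is supported on $\Sopl$-shifts through the entropy estimate leading to \eqref{qbar}, identifies $Q=\mu_-\times\quniv$ by uniqueness, and concludes by lower semicontinuity. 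Your argument is arguably cleaner, since it never leaves the class of shift-supported kernels, but it carries essentially the same bookkeeping burden (the forward/backward frame matching via $\Soplr$-invariance of $\bar\mu$) that the paper instead pays for in the $Q$-identification. For \eqref{claim2}, your route is again different and notably more self-contained: the paper derives the intermediate identity \eqref{step1} \emph{from} the already-established level-2 and level-3 LDPs via the contraction principle and then applies Lemma \ref{lm:lsc}, whereas you prove the same identity directly in Step 1 and deduce $H^*(f)=H_\ell^*(f)$ for cylinder $f$, finishing with compactness, Stone--Weierstrass density of cylinder functions in $\sC(\bigom_+)$, and the $1$-Lipschitz property of $H^*$. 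Your version decouples Lemma \ref{conjugacy} from the LDP entirely, which is a genuine structural simplification. Two small points to make explicit when writing this up: in the ``$\ge$'' direction of your Step-1 identity you should observe that the infimum in \eqref{Helldef} is unchanged upon restricting to kernels $q$ supported on $\Sopr$-shifts (since the others give infinite entropy), because the Markov extension $\nu^q$ is only meaningful for such $q$; and the claim that the $\cG_{2-\ell,1}$-marginals depend only on $\mu'_\ell$ needs to be spelled out via the frame-change map $(\w,z_{2-\ell,1})\mapsto(T_{z_{2-\ell}+\dotsm+z_0}\w,z_{2-\ell,1})$ and $\Soplr$-invariance, exactly the delicacy you flagged at the end.
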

	
	\begin{proof}
		Let us start with the first identity.
		Assume $\mu_0\ll\P$ since otherwise the equality holds trivially.
		Let $\mu_-^{(\ell)}$ be the law of $(\w,Z_{1-\ell,0})$ under $\mu_-$. Then,
		%the shift-invariance of $\P$ and 
		the $\Sopr$-invariance of $\mu$ implies that 
%		\begin{align*}
%		H_\infty(\mu)&\overset{\text{def}}=\sup_{\ell\ge1}H_\ell(\mu_{|\bigom_\ell})\\
%		&=\sup_{\ell\ge1}\inf\{H(\mu_-^{(\ell)}\times q\,|\,\mu_-^{(\ell)}\times\pl):q\in\MC(\bigom_\ell)\text{ and }\mu_-^{(\ell)}q=\mu_-^{(\ell)}\}.
%		\end{align*}
		\begin{align*}
		H_\infty(\mu)&\overset{\text{def}}=\sup_{\ell\ge1}H_\ell(\mu_{|\bigom_\ell})
		=\sup_{\ell\ge1}\inf\{H(\mu_-^{(\ell)}\times q\,|\,\mu_-^{(\ell)}\times\pl):q\in\MC(\bigom_\ell)\text{ and }\mu_-^{(\ell)}q=\mu_-^{(\ell)}\}.
		\end{align*}
	
		Recall the universal kernel $\quniv$ that corresponds to all $\Sopr$-invariant measures $\mu\in\M_1(\bigom_+)$.
		Let \[\quniv^{(\ell)}(\zeta,\Sopl_z\zeta)=E^{\mu_-}[\quniv(\wz,\Sopl_z\wz)\,|\,\wz_{1-\ell,0}=\zeta].\]
		Then, $\mu_-^{(\ell)}$ is $\quniv^{(\ell)}$-invariant. Moreover, $\mu_-^{(\ell)}\times \quniv^{(\ell)}$ is the restriction of $\mu_-\times\quniv$
		to $\bigom_\ell^2$. Thus, $H(\mu_-^{(\ell)}\times \quniv^{(\ell)}\,|\,\mu_-^{(\ell)}\times\pl)\le H(\mu_-\times \quniv\,|\,\mu_-\times\pl)$.
		This shows that $H_\infty(\mu)\le H(\mu)$.
		
		The other direction is trivial if $H_\infty(\mu)=\infty$. On the other hand, if $H_\infty(\mu)=h<\infty$, then there
		exists a sequence $q_-^{(\ell)}\in\MC(\bigom_\ell)$ such that $\mu_-^{(\ell)}$ is $q_-^{(\ell)}$-invariant, and
			\[H(\mu_-^{(\ell)}\times q_-^{(\ell)}\,|\,\mu_-^{(\ell)}\times \pl)\le h+\ell^{-1}.\] 
		This implies that, for $\mu_-^{(\ell)}$-a.e.\ $\wz\in\bigom_\ell$, $q_-^{(\ell)}(\wz,\{\Sopl_z\wz:z\in\range\})=1$. 
		
		For $\ell\ge\ell'$, measures $\mu_-^{(\ell)}\times q_-^{(\ell)}$ have marginals $\mu_-^{(\ell')}$.
		Thus, for $\ell'$ fixed, measures $\mu_-^{(\ell)}\times q_-^{(\ell)}$ restricted to $\bigom_{\ell'}^2$ are tight. 
		%\mu(K^c)<\e implies \mu\times q((K\times K)^c)\le\mu(K^c)+\mu(K^c)<2\e
		We can use the diagonal trick to extract one sequence that converges weakly on all spaces $\bigom_{\ell'}^2$ simultaneously. 
		By Kolmogorov's extension theorem one can find a limit point $Q\in\M_1(\bigom_-^2)$. The marginals of $Q$ are equal to $\mu_-$
		and hence the conditional distribution of the second coordinate $\zeta$ under $Q$, given the first coordinate $\wz$,  defines
		a kernel $q_-(\wz,d\zeta)$ that leaves $\mu_-$ invariant.
		The following entropy argument shows that $q_-$ is still supported on $\Sopl_z$-shifts; that is
			\begin{align}\label{qbar}
			Q\Big\{(\wz,\zeta)\in\bigom_-:\zeta\in \cup_z \{\Sopl_z\wz\}\Big\} = 1.
			\end{align}		
		
		For any $\e>0$, there exists
		a compact subset $K_\e\subset\Omega$ such that $\mu_-(K_\e\times\range^{\Z_-})\ge1-\e$.
		On the other hand, for any finite $\cA\subset\range$  the function $\w\mapsto F(\w,\cA)=\sum_{z\in\cA}\pi_{0,z}(\w)$  is continuous.
		Furthermore, this function increases up to 1, for all $\w$, as $\cA$ increases to $\range$.
		Thus, for each $\w$ in $K_\e$ choose a set $\cA$ so that $F(\w,\cA)\ge 1-\e/2$ and pick an open neighborhood
		$G$ of $\w$ 
		so that for $\w'\in G$, $F(\w' ,\cA)\ge1-\e$. Since $K_\e$ is compact, it can be covered with finitely many such neighborhoods.
		Let $\cA_\e$ be the union of the corresponding sets $\cA$.
		Then, $\cA_\e$ is finite and $F(\w,\cA_\e)\ge1-\e$ for all $\w\in K_\e$. In fact, we can and will choose $\cA_\e$ to increase to $\range$
		as $\e$ decreases to 0.
		
		Now recall the variational characterization of relative entropy (see 
		Lemma 10.1 in \citep{vara-ldp} or Theorem 6.7 in \citep{rass-sepp-ldp}) and write
%			\begin{align*}
%				&h+1\ge H(\mu_-^{(\ell)}\times q^{(\ell)}_-\,|\,\mu_-^{(\ell)}\times \pl)\\
%				&=E^{\mu_-^{(\ell)}}\Big[ \sup_f  \Big\{ \sum_z q_-^{(\ell)}(\wz, \Sopl_z\wz)f(\Sopl_z\wz)
%				- \log \sum_z \pi_{0,z}(\w) e^{f(\Sopl_z\wz)} \Big\}\Big]\\
%				&\ge E^{\mu_-^{(\ell)}}\Big[ \sup_f  \Big\{ \sum_z q_-^{(\ell)}(\wz, \Sopl_z\wz)f(\Sopl_z\wz)
%				- \log \sum_z \pi_{0,z}(\w) e^{f(\Sopl_z\wz)} \Big\}\one_{K_\e\times\range^\ell}(\wz)\Big]\\
%				&\ge E^{\mu_-^{(\ell)}}\Big[\Big\{C\sum_{z\notin\cA_\e} q_-^{(\ell)}(\wz, \Sopl_z\wz)
%				- \log \sum_z \pi_{0,z}(\w) e^{C\one\{z\notin\cA_\e\}} \Big\}\one_{K_\e\times\range^\ell}(\wz)\Big]\\
%				&= E^{\mu_-^{(\ell)}}\Big[\Big\{C\sum_{z\notin\cA_\e} q_-^{(\ell)}(\wz, \Sopl_z\wz)
%				- \log \Big(1+(e^C-1)\sum_{z\notin\cA_\e} \pi_{0,z}(\w)\Big)\Big\}\one_{K_\e\times\range^\ell}(\wz)\Big]\\
%				&\ge E^{\mu_-^{(\ell)}}\Big[\Big\{C\sum_{z\notin\cA_\e} q_-^{(\ell)}(\wz, \Sopl_z\wz)
%				- \log(1+(e^C-1)\e)\Big\}\one_{K_\e\times\range^\ell}(\wz)\Big].
%		\end{align*}
			\begin{align*}
				h+1&\ge H(\mu_-^{(\ell)}\times q^{(\ell)}_-\,|\,\mu_-^{(\ell)}\times \pl)\\
				&=E^{\mu_-^{(\ell)}}\Big[ \sup_f  \Big\{ \sum_z q_-^{(\ell)}(\wz, \Sopl_z\wz)f(\Sopl_z\wz)
				- \log \sum_z \pi_{0,z}(\w) e^{f(\Sopl_z\wz)} \Big\}\Big]\\
				&\ge E^{\mu_-^{(\ell)}}\Big[ \sup_f  \Big\{ \sum_z q_-^{(\ell)}(\wz, \Sopl_z\wz)f(\Sopl_z\wz)
				- \log \sum_z \pi_{0,z}(\w) e^{f(\Sopl_z\wz)} \Big\}\one_{K_\e\times\range^\ell}(\wz)\Big]\\
				&\ge E^{\mu_-^{(\ell)}}\Big[\Big\{C\sum_{z\notin\cA_\e} q_-^{(\ell)}(\wz, \Sopl_z\wz)
				- \log \sum_z \pi_{0,z}(\w) e^{C\one\{z\notin\cA_\e\}} \Big\}\one_{K_\e\times\range^\ell}(\wz)\Big]\\
				&= E^{\mu_-^{(\ell)}}\Big[\Big\{C\sum_{z\notin\cA_\e} q_-^{(\ell)}(\wz, \Sopl_z\wz)
				- \log \Big(1+(e^C-1)\sum_{z\notin\cA_\e} \pi_{0,z}(\w)\Big)\Big\}\one_{K_\e\times\range^\ell}(\wz)\Big]\\
				&\ge E^{\mu_-^{(\ell)}}\Big[\Big\{C\sum_{z\notin\cA_\e} q_-^{(\ell)}(\wz, \Sopl_z\wz)
				- \log(1+(e^C-1)\e)\Big\}\one_{K_\e\times\range^\ell}(\wz)\Big].
		\end{align*}
		In the third inequality we used $f(\wz)=C\one\{z_0\notin\cA_\e\}$. Now, fix a $\delta>0$ and choose
		$C$ large such that $(1+h)/C<\delta/2$. Then choose $\e>0$ small such that 
		$C^{-1}\log(1+(e^C-1)\e)+\e<\delta/2$. The above inequalities then become
%			\begin{align*}
%				&E^{\mu_-^{(\ell)}}\Big[\sum_{z\notin\cA_\e} q_-^{(\ell)}(\wz_{1-\ell,0}, \Sopl_z\wz_{1-\ell,0})\Big]\\
%				%=E^{\mu_-^{(\ell)}}\Big[\sum_{z\notin\cA_\e} q_-^{(\ell)}(\wz, \Sopl_z\wz)\Big]\label{nice}\\
%				&\qquad\le C^{-1}(1+h)
%				+C^{-1}\log(1+(e^C-1)\e)+1-\mu_-^{(\ell)}(K_\e\times\range^\ell)<\delta.\nn
%			\end{align*}
			\begin{align*}
				&E^{\mu_-^{(\ell)}}\Big[\sum_{z\notin\cA_\e} q_-^{(\ell)}(\wz_{1-\ell,0}, \Sopl_z\wz_{1-\ell,0})\Big]
				%=E^{\mu_-^{(\ell)}}\Big[\sum_{z\notin\cA_\e} q_-^{(\ell)}(\wz, \Sopl_z\wz)\Big]\label{nice}\\
				\le C^{-1}(1+h)
				+C^{-1}\log(1+(e^C-1)\e)+1-\mu_-^{(\ell)}(K_\e\times\range^\ell)<\delta.\nn
			\end{align*}
		
%		Let $(q_-(\wz,\Sopl_z\wz))_{z\in\range}$ be any $\mu_-$-a.s.\ limit point  of 
%		$(q_-^{(\ell)}(\wz,\Sopl_z\wz))_{z\in\range}$. Then,  $q_-\in\MC(\bigom_-)$ and is such that,
%		$\mu_-$-a.s., $q_-(\wz,\{\Sopl_z\wz:z\in\range\})=1$. Indeed, if $f:\bigom_-\to\R$ is bounded,  measurable, and   
%		depends only on $(\w,z_{1-\ell',0})$ for some $\ell'\ge0$, then for $\ell\ge\ell'$
%			\[\sum_{z\in\range}\int q_-^{(\ell)}(\wz_{1-\ell,0},\Sopl_z\wz_{1-\ell,0}) f(\Sopl_z\wz)\,\mu_-(d\wz)=E^{\mu_-}[f].\]
%		Bounded convergence (applied twice) implies that
%			\begin{align*}
%			&\lim_{\e\to0}\lim_{\ell\to\infty}\sum_{z\in\cA_\e}\int q_-^{(\ell)}(\wz_{1-\ell,0},\Sopl_z\wz_{1-\ell,0}) f(\Sopl_z\wz)\,\mu_-(d\wz)\\
%			&=\lim_{\e\to0}\sum_{z\in\cA_\e}\int q_-(\wz,\Sopl_z\wz) f(\Sopl_z\wz)\,\mu_-(d\wz)\\
%			&=\sum_{z\in\range}\int q_-(\wz,\Sopl_z\wz) f(\Sopl_z\wz)\,\mu_-(d\wz).
%			\end{align*}			
%		On the other hand, \eqref{nice} implies that 
%			\[\Big|\sum_{z\notin\cA_\e}\int q_-^{(\ell)}(\wz_{1-\ell,0},\Sopl_z\wz_{1-\ell,0}) f(\Sopl_z\wz)\,\mu_-(d\wz)\Big|<\norm{f}_\infty\delta.\]

		Since $\{(\wz,\zeta)\in\bigom_-^2:\zeta_{1-\ell',0}=\Sopl_z\wz_{1-\ell',0}\text{ and }z\in\cA_\e\}$ is closed it follows that 
			\begin{align*}
				&Q\{(\wz,\zeta)\in\bigom_-^2:\zeta_{1-\ell',0}=\Sopl_z\wz_{1-\ell',0}\text{ and }z\in\cA_\e\}\\
				&\ge\varlimsup_{\ell\to\infty}\mu_-^{(\ell)}\times q_-^{(\ell)}\{(\wz_{1-\ell,0},\zeta_{1-\ell,0})\in\bigom_\ell^2:\zeta_{1-\ell',0}=\Sopl_z\wz_{1-\ell',0}\text{ and }z\in\cA_\e\}\\
				&\ge\varlimsup_{\ell\to\infty}\mu_-^{(\ell)}\times q_-^{(\ell)}\{(\wz_{1-\ell,0},\Sopl_z\wz_{1-\ell,0}):\wz_{1-\ell,0}\in\bigom_{\ell},z\in\cA_\e\}
				\ge1-\delta.
			\end{align*}
		But $\{(\wz,\zeta):\wz\in\bigom_-,\ \zeta=\Sopl_z\wz\text{, and }z\in\cA_\e\}$ is equal to the decreasing limit
			\[\bigcap_{\ell'\ge1}\{(\wz,\zeta)\in\bigom_-^2:\zeta_{1-\ell',0}=\Sopl_z\wz_{1-\ell',0}\text{ and }z\in\cA_\e\}.\]
		Now, taking $\delta\to0$ then $\e\to0$ proves \eqref{qbar}. Since there 
		is a unique kernel that leaves 
		%%%OLD-begin-2
		%$\mu_-$-invariant
		%%%OLD-end-2
		%%%NEW-begin-2
		$\mu_-$ invariant 
		%%%NEW-end-2
		and is supported on $\Sopl_z$-shifts, $Q=\mu_-\times\quniv$ is the only possible 
		limit point. Lower semicontinuity of the entropy implies that
		 	\begin{align*}
			H\Big((\mu_-\times\quniv)_{|\bigom_{\ell'}^2}\,\Big|\,(\mu_-\times \pl)_{|\bigom_{\ell'}^2}\Big)
			&\le\varliminf_{\ell\to\infty}H\Big((\mu_-^{(\ell)}\times q_-^{(\ell)})_{|\bigom_{\ell'}^2}\,\Big|\,(\mu_-^{(\ell)}\times \pl)_{|\bigom_{\ell'}^2}\Big)\\
			&\le\varliminf_{\ell\to\infty}H(\mu_-^{(\ell)}\times q_-^{(\ell)}\,|\,\mu_-^{(\ell)}\times \pl)
			\le h.
			\end{align*}
		Taking $\ell'\to\infty$ proves that $H(\mu)\le H_\infty(\mu)$ and \eqref{claim1} holds.
%%%OLD-begin-3
		%Next, we prove \eqref{claim2}.
%%%OLD-end-3
%%%NEW-begin-3

		Next, we prove \eqref{claim2}. 
%%%NEW-end-3		
%%%NEW-begin-4
		First, we show that for $\mu\in \M_1(\bigom_\ell)$,
		\begin{align}
		H_\ell(\mu)= \inf\{H(\nu): \nu\text{ is $\Sopr$-invariant and }\nu_{\vert{\bigom_\ell}} = \mu  \}. \label{step1}
		\end{align}

		If $\mu_0\not\ll \P$  then both sides are infinite.  Suppose $\mu_0\ll \P$.
		Write temporarily $I(\nu)=\sup_\ell H^{**}_\ell(\nu_{\vert{\bigom_\ell}})$
		for the level 3 rate function.  If  $\nu$ is $\Sopr$-invariant and  $\nu_0\ll \P$
		then by \eqref{claim1}
		\begin{align}
		I(\nu)=\sup_\ell H^{**}_\ell(\nu_{\vert{\bigom_\ell}})=   \sup_\ell H_\ell(\nu_{\vert{\bigom_\ell}})  =   H(\nu).\label{H1}
		\end{align}

		By the  level 3 to  level 2 contraction,
		\[H_\ell(\mu) =  H^{**}_\ell(\mu) =   \inf\{I(\nu): \nu\text{ is $\Sopr$-invariant and }\nu_{\vert{\bigom_\ell}} = \mu  \}.\]
		Since $I\le H$,  to prove \eqref{step1}  it suffices to consider
		the case $H_\ell(\mu) < \infty$.   Only $\Sopr$-invariant measures have
		finite level 3 rate, hence there exists at least one
		$\Sopr$-invariant $\nu$ such that  $\nu_{\vert{\bigom_\ell}} = \mu$.
		Furthermore,  the measures $\nu$ that appear in the contraction satisfy
		$\nu_0=\mu_0\ll \P$, and so by \eqref{H1} equation \eqref{step1} follows.

		Now, consider  $\Sopr$-invariant measures $\nu$.
		By \eqref{claim1} $I\le H$, and since $I$ is a l.s.c.\ convex function, also  $I\le H^{**}$.
		By \eqref{step1} and the basic Lemma \eqref{lm:lsc},
		\[H^{**}_\ell(\mu)=\inf\{H^{**}(\nu):\nu\text{ is $\Sopr$-invariant and }\nu_{\vert{\bigom_\ell}} = \mu\}.\]
		Outside $\Sopr$-invariant measures  $H^{**}\equiv\infty$  so whether or not
		the invariance condition is included in the infimum is immaterial.

		Let  $c > I(\nu)$.  For each $\ell$ use above to find $\mu^{(\ell)}$ such that
		$\mu^{(\ell)}_{\vert{\bigom_\ell}} = \nu_{\vert{\bigom_\ell}}$  and
		$H^{**}(\mu^{(\ell)}) < c$.    $\mu^{(\ell)} \to \nu$  and so by lower
		semicontinuity   $H^{**}(\nu) \le  \varliminf  H^{**}(\mu^{(\ell)}) \le c$.
		This shows $H^{**} \le I$.
	\end{proof}

\appendix
\section{Technical Lemmas}
\begin{lemma}\label{infinite-Jensen}  
Let $g$ be a bounded measurable function on a product space
$\X\times\Y$, $\mu$ a probability measure on $\X$ and $\rho$ a probability measure on $\Y$.
Then
\[   \log \int_\X e^{\int_\Y g(x,y)\,\rho(dy)}\,\mu(dx) 
\le  \int_\Y\Big[ \, \log \int_\X e^{  g(x,y)}\,\mu(dx) \,\Big]\, \rho(dy). \]
\end{lemma}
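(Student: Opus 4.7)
The plan is to invoke the Donsker--Varadhan variational formula for the log-Laplace functional and then exchange a supremum with an integral. Recall that for any probability measure $\mu$ on $\X$ and any bounded measurable $f:\X\to\R$,
\[
\log\int_\X e^{f(x)}\,\mu(dx)=\sup_\nu\Big\{\int_\X f\,d\nu-H(\nu\,|\,\mu)\Big\},
\]
where the supremum runs over probability measures $\nu$ on $\X$ (see Lemma 10.1 of \citep{vara-ldp} or Theorem 6.7 of \citep{rass-sepp-ldp}). This is the same duality already invoked several times in the upper bound proof.

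Apply this identity with $f(x)=\int_\Y g(x,y)\,\rho(dy)$, which is bounded and measurable on $\X$ by Fubini--Tonelli and the boundedness of $g$. The left-hand side of the lemma becomes
\[
\sup_\nu\Big\{\int_\X\!\int_\Y g(x,y)\,\rho(dy)\,\nu(dx)-H(\nu\,|\,\mu)\Big\}.
\]
Since $\rho$ is a probability measure and $H(\nu\,|\,\mu)$ does not depend on $y$, we can write $H(\nu\,|\,\mu)=\int_\Y H(\nu\,|\,\mu)\,\rho(dy)$, and another application of Fubini to the first term shows the above equals
\[
\sup_\nu\int_\Y\Big\{\int_\X g(x,y)\,\nu(dx)-H(\nu\,|\,\mu)\Big\}\,\rho(dy).
\]

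Now pull the supremum inside the $\rho$-integral via the trivial bound $\sup_\nu\int F(\nu,y)\,\rho(dy)\le\int\sup_\nu F(\nu,y)\,\rho(dy)$, and then apply the variational formula once more, this time for each fixed $y$ to the bounded measurable function $g(\,\cdot\,,y)$. This gives
\[
\int_\Y\sup_\nu\Big\{\int_\X g(x,y)\,\nu(dx)-H(\nu\,|\,\mu)\Big\}\,\rho(dy)=\int_\Y\log\int_\X e^{g(x,y)}\,\mu(dx)\,\rho(dy),
\]
which is the right-hand side of the lemma. The only point requiring a moment's thought is the measurability (in $y$) of $\log\int_\X e^{g(x,y)}\,\mu(dx)$, but this follows from Fubini and boundedness of $g$; there is no real obstacle in any step. (An alternative route proves the inequality by first verifying that $f\mapsto\log\int e^f\,d\mu$ is convex via H\"older's inequality and then applying Jensen's inequality to the $\rho$-average of $y\mapsto g(\cdot,y)$, but the duality argument above is cleaner and avoids discretization.)
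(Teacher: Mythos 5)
Your proposal is correct and takes essentially the same approach as the paper: both rest on the Donsker--Varadhan variational characterization of $\log\int e^f\,d\mu$ in terms of relative entropy, combined with exchanging a supremum and the $\rho$-integral. The paper simply writes the argument from the other end, by plugging in the explicit optimizing measure $\gamma(dx)\propto e^{\int g(x,y)\,\rho(dy)}\mu(dx)$ that achieves the supremum for the left-hand side, whereas you push the supremum inside the $y$-integral; these are the same inequality read in opposite directions.
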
 
\begin{proof}  The inequality can be thought of as an infinite-dimensional Jensen's inequality, 
applied to
the convex functional  $\Psi(f)= \log \int_\X e^{  f(x)}\,\mu(dx)$.  Proof is immediate from
the variational characterization of relative entropy; see Lemma 10.1 in \citep{vara-ldp} or Theorem 6.7 in \citep{rass-sepp-ldp}.
First for an arbitrary  probability measure 
$\gamma$ on $\X$, 
\begin{align*}
& \int_\Y\Big[ \, \log \int_\X e^{  g(x,y)}\,\mu(dx) \,\Big]\, \rho(dy)
\ge \int_\Y\Big[ \, \int_\X g(x,y)\,\gamma(dx)   -  H(\gamma\,\vert\,\mu) \,\Big]\, \rho(dy)\\%[3pt]
&\quad =  \int_\X  \Big[  \int_\Y g(x,y)\, \rho(dy) \Big] \,\gamma(dx)   -  H(\gamma\,\vert\,\mu)
=   \log \int_\X e^{\int_\Y g(x,y)\,\rho(dy)}\,\mu(dx) 
\end{align*}
where the last equality comes from taking 
\[  \gamma(dx)=\Bigl(\int_\X e^{\int_\Y g(z,y)\,\rho(dy)}\,\mu(dz) \Bigr)^{-1}  
e^{\int_\Y g(x,y)\,\rho(dy)}\,\mu(dx) .  \qedhere \]
\end{proof} 

%%%NEW-begin-5
\begin{lemma}\label{lm:lsc}
Let $\mathbb S$ and $\mathbb T$ be compact metric spaces and   $\pi:\mathbb S\to \mathbb T$ continuous.
Let $f:\mathbb S\to[0,\infty]$ be an arbitrary function and
$f_{\rm lsc}(s)=\lim_{r\searrow 0}\inf_{x\in B(s,r)} f(x)$  its lower semicontinuous regularization.    
Let $g(t)=\inf_{\pi(s)=t} f(s)$.   Then
$g_{\rm lsc}(t)= \inf_{\pi(s)=t} f_{\rm lsc}(s)$.
\end{lemma}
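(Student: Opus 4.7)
The plan is to prove the two inequalities $g_{\rm lsc}(t) \le \inf_{\pi(s)=t} f_{\rm lsc}(s)$ and its reverse separately. The first is an essentially formal consequence of continuity of $\pi$; the second is where compactness of $\mathbb S$ does real work.

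For the easy direction, I would start from the pointwise inequality $g(\pi(s)) \le f(s)$, which follows immediately from the definition of $g$. Applying lower semicontinuous regularization to both sides and exploiting continuity of $\pi$ gives $g_{\rm lsc}(\pi(s)) \le (g\circ\pi)_{\rm lsc}(s) \le f_{\rm lsc}(s)$; the middle inequality is just the observation that for every neighbourhood $U$ of $\pi(s)$ there is a neighbourhood $B(s,r) \subset \pi^{-1}(U)$, so $\inf_{B(s,r)} (g\circ\pi) \ge \inf_U g$. Taking the infimum over $s \in \pi^{-1}(t)$ yields $g_{\rm lsc}(t) \le \inf_{\pi(s)=t} f_{\rm lsc}(s)$.

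For the reverse inequality, fix $t \in \mathbb T$ and set $a = g_{\rm lsc}(t)$. For each $r > 0$ I would select $t_r \in B(t,r)$ with $g(t_r) \le \inf_{B(t,r)} g + r$, and then a lift $s_r \in \pi^{-1}(t_r)$ with $f(s_r) \le g(t_r) + r$; both choices are possible from the definitions of $\inf$ and $g$. Using the monotonicity of $r \mapsto \inf_{B(t,r)} g$, this gives $f(s_r) \le a + 2r$. Compactness of $\mathbb S$ then produces a subsequence $r_n \searrow 0$ along which $s_{r_n} \to s^* \in \mathbb S$, and continuity of $\pi$ forces $\pi(s^*) = \lim_n t_{r_n} = t$. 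Lower semicontinuity of $f_{\rm lsc}$ at $s^*$ combined with $f_{\rm lsc} \le f$ yields
\[
f_{\rm lsc}(s^*) \le \varliminf_{n\to\infty} f_{\rm lsc}(s_{r_n}) \le \varliminf_{n\to\infty} f(s_{r_n}) \le a,
\]
so $\inf_{\pi(s)=t} f_{\rm lsc}(s) \le f_{\rm lsc}(s^*) \le g_{\rm lsc}(t)$, completing the proof.

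The only real obstacle is organizing the double approximation in the second direction so that the extracted limit $s^*$ actually lies in the fibre $\pi^{-1}(t)$; this is ensured by choosing $t_r$ inside $B(t,r)$ before lifting, so that continuity of $\pi$ pins $\pi(s^*)$ to $t$ after passing to the subsequence. Everything else is bookkeeping with the standard formula $f_{\rm lsc}(s) = \sup_{r > 0} \inf_{B(s,r)} f$.
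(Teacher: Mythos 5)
Your proof is correct and follows essentially the same route as the paper's. For the inequality $g_{\rm lsc}(t)\le\inf_{\pi(s)=t}f_{\rm lsc}(s)$, the paper fixes an $s$ in the fibre with $f_{\rm lsc}(s)<c$, picks $s_j\to s$ with $f(s_j)<c$, and uses lower semicontinuity of $g_{\rm lsc}$ along $\pi(s_j)\to t$; your regularization of the pointwise bound $g\circ\pi\le f$ via continuity of $\pi$ is the same content in slightly cleaner packaging (though you should say "first inequality" rather than "middle"). For the reverse inequality, the paper simply asserts that $t\mapsto\inf_{\pi(s)=t}f_{\rm lsc}(s)$ is l.s.c.\ and lies below $g$; your double-approximation followed by extraction of a convergent subsequence of lifts $s_r$ and continuity of $\pi$ is precisely the proof of that asserted lower semicontinuity, made explicit. (One tiny omission: you implicitly assume $a=g_{\rm lsc}(t)<\infty$ when selecting $t_r$ and lifting to $s_r$; if $a=\infty$ the inequality is vacuous, so this is only a missing remark, not a gap.)
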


\begin{proof}  Immediately  $g_{\rm lsc}(t)\ge \inf_{\pi(s)=t} f_{\rm lsc}(s)$
because the function on the right is at or below $g(t)$ and on a compact metric space it is l.s.c.
%take a sequence t_i\to t with g_lsc(t_i)\to a. take s_i with \pi(s_i)=t_i and g_lsc(t_i)+1/i\ge f_lsc(s_i).
%let s be a limit point of s_i.  then \pi(s)=t and a\ge \liminf f_lsc(s_i)\ge f_lsc(s)\ge g_lsc(t).

Let $c >  \inf_{\pi(s)=t} f_{\rm lsc}(s)$.  Fix $s$ so that $\pi(s)=t$ and
$f_{\rm lsc}(s) < c$.  Find  $s_j \to s$ so that $f(s_j) < c$  (constant sequence
$s_j=s$ is a legitimate choice).  Then $\pi(s_j)\to t$,  and consequently
\[g_{\rm lsc}(t) \le \varliminf g(\pi(s_j))  \le  \varliminf f(s_j)  \le c.\qedhere\]
\end{proof}
%%%NEW-end-5

%%%OLD-begin-5
%\begin{lemma}\label{our-minimax}
%Let $\X$ be a compact topological space. Let $f_\ell:\X\to\R$ be a sequence of upper semicontinuous
%functions that is nonincreasing in $\ell$. Then
%	\[\sup_{x\in\X}\inf_{\ell\ge1}f_\ell(x)=\inf_{\ell\ge1}\sup_{x\in\X}f_\ell(x).\]
%\end{lemma}
%
%\begin{proof}
%	One direction is clear. Let us thus prove the other direction:
%	\[\sup_{x\in\X}\inf_{\ell\ge1}f_\ell(x)\ge\inf_{\ell\ge1}\sup_{x\in\X}f_\ell(x).\]
%	To this end, we can assume that 
%	\[\sup_{x\in\X}\inf_{\ell\ge1}f_\ell(x)=A<\infty.\]
%	Fix $\e>0$.
%	Then, for each $x\in\X$ there exists $\ell(x)$ such that
%		\[f_{\ell(x)}(x)<A+\e.\]
%	By upper semicontinuity, $\{y:f_{\ell(x)}(y)<A+\e\}$ is an open neighborhood of $x$.
%	Since $\X$ is compact, there exist finitely many $x_1,\dotsc,x_N$ such that
%		\[\X=\cup_{i=1}^N \{y:f_{\ell(x_i)}(y)<A+\e\}.\]
%	This implies that if $\ell\ge\ell(x_i)$, for all $i\le N$, then $\sup_{y\in\X} f_\ell(y)\le A+\e$.
%	Taking inf over $\ell$, then $\e\to0$ finishes the proof.
%\end{proof}
%%%OLD-end-5

\section{Proof of Lemma \ref{CLASS K}}
In what follows, $C$ denotes a chameleon 
constant which can change values from line to line. The only values it depends upon
are $|\rrange|$, $\ell$, and $d$. $C_r$ is again a chameleon constant but its value also
depends on $r$. Finally, $C_r(\w)$ also depends on $\w$. Note that $\ell\ge1$ is a fixed
integer, throughout this section.

Recall that $\xtil_\ell=\ztil_1+\dotsc+\ztil_\ell$. Similarly, $\xbar_\ell=\zbar_1+\dotsc+\zbar_\ell$.
Under \eqref{RWRE-span} there always exists
a path from $(y,\tilde z_{1,\ell})$ to $(x,z_{1,\ell})$ in the sense that
there exist $m\ge \ell$ and $a_1,\dotsc,a_{m-\ell}\in\rrange$ such that 
	\[  y+\xtil_\ell+ a_1+\dotsm+a_{m-\ell}=x. \]
The definition is independent of $z_{1,\ell}$ but for symmetry of language it seems
sensible to keep it in the statement. The case $m=\ell$ is admissible also and then
$y+\xtil_\ell=x$.   Then if we set $a_{m-\ell+1, m}=z_{1,\ell}$,
the composition $\Sopr_{a_m}\circ\dotsm\circ\Sopr_{a_1}$ takes 
$(T_y\w, \ztil_{1,\ell})$ to $(T_x\w,z_{1,\ell})$ for all $\w\in\Omega$.  

Paths can be concatenated.  If there is a path from $(y,\ztil_{1,\ell})$ to $(x,z_{1,\ell})$
and from $(u, \zbar_{1,\ell})$ to $(y,\ztil_{1,\ell})$, then we have 
	\[  y+\xtil_\ell+ a_1+\dotsm+a_{m-\ell}=x 
		\quad\text{and}\quad u+\xbar_\ell+ b_1+\dotsm+b_{n-\ell}=y. \]
Taking $b_{n-\ell+1,n}=\tilde z_{1,\ell}$ we then have 
	\[   u+\xbar_\ell+ b_1+\dotsm+b_{n} + a_1+\dotsm+a_{m-\ell}=x \]
and there is a path from $(u, \zbar_{1,\ell})$ to $(x,z_{1,\ell})$. 

For any two points $(x,z_{1,\ell})$ and $(\xbar,\zbar_{1,\ell})$ and any $\ztil_{1,\ell}$
there exists a point $y\in\Z^d$ such that from $(y,\ztil_{1,\ell})$  there is a path to  both $(x,z_{1,\ell})$ 
and $(\xbar,\zbar_{1,\ell})$.  For this, find  first 
$\abar_1,\dotsc,\abar_{m-\ell}$ and $a_1,\dotsc,a_{n-\ell}\in\rrange$ such that 
	\[  \xbar- x = (\abar_1+\dotsm +\abar_{m-\ell})-(  a_1+\dotsm+a_{n-\ell}) \]
so that 
	\[y'=\xbar-  (\abar_1+\dotsm +\abar_{m-\ell}) = x  -(  a_1+\dotsm+a_{n-\ell})\]
and then take $y= y'-\xtil_\ell$.   By induction, there is a common starting point for
paths to any finite number of points.  

Now fix $F\in\K^p(\bigom_\ell\times\range)$.
If there is a path from $(y,\ztil_{1,\ell})$ to $(x,z_{1,\ell})$, set $\wz_0=(T_y\w, \ztil_{1,\ell})$, 
$\wz_i=\Sopr_{a_i}\wz_{i-1}$ for $i=1,\dotsc, m$ so that $\wz_m=(T_x\w, z_{1,\ell})$,
and then   
	\begin{align}
  		L(\w, (y,\ztil_{1,\ell}), (x,z_{1,\ell}))= \sum_{i=0}^{m-1} F(\wz_i, a_{i+1}).  \label{defL}
	\end{align}
By the closed loop property $L(\w, (y,\ztil_{1,\ell}), (x,z_{1,\ell}))$ is independent of the
path chosen. 
If $a_1,\dotsc,a_{m-\ell}$  work for  $(y,\ztil_{1,\ell})$ and  $(x,z_{1,\ell})$,
then these steps work also for  $(y+u,\ztil_{1,\ell})$ and  $(x+u,z_{1,\ell})$. 
The effect on the right-hand side of \eqref{defL} is simply to shift $\w$ by $u$, and
consequently 
	\begin{align}  
	L(T_u\w, (y,\ztil_{1,\ell}), (x,z_{1,\ell}))=L(\w, (y+u,\ztil_{1,\ell}), (x+u,z_{1,\ell})). 
	\label{shiftL}
	\end{align}

Next define $f:\Omega\times\rrange^{2\ell}\times\Z^d\to\R$ by
	\begin{align}   
		f(\w, z_{1,\ell}, \zbar_{1,\ell}, x)= L(\w, (y,\ztil_{1,\ell}), (x,\zbar_{1,\ell}))
			- L(\w, (y,\ztil_{1,\ell}), (0,z_{1,\ell}))  \label{deff}
	\end{align}
for any $(y,\ztil_{1,\ell})$ with  a path to  both $(0,z_{1,\ell})$ and $(x,\zbar_{1,\ell})$.   
This definition is  independent of the choice of  $(y,\ztil_{1,\ell})$, again by the closed loop property.
%For suppose $(y',\tilde z'_{1,\ell})$ is an alternative choice.  
%Pick $(y'',\tilde z''_{1,\ell})$ with a path to both $(y,\tilde z_{1,\ell})$ and
% $(y',\tilde z'_{1,\ell})$.  Then the closed loop property implies 
%\begin{align*} 
%&L(\w, (y'',\tilde z''_{1,\ell}), (y,\tilde z_{1,\ell})) + L(\w, (y,\tilde z_{1,\ell}), (0,  z_{1,\ell})) \\
%&\qquad =  L(\w, (y'',\tilde z''_{1,\ell}), (y',\tilde z'_{1,\ell})) + L(\w, (y',\tilde z'_{1,\ell}), (0,  z_{1,\ell})) 
%\intertext{and}
%&L(\w, (y'',\tilde z''_{1,\ell}), (y,\tilde z_{1,\ell})) + L(\w, (y,\tilde z_{1,\ell}), (x, \bar z_{1,\ell})) \\
%&\qquad 
%=  L(\w, (y'',\tilde z''_{1,\ell}), (y',\tilde z'_{1,\ell})) + L(\w, (y',\tilde z'_{1,\ell}), (x, \bar z_{1,\ell})) 
%\end{align*} 
%from which follows 
%\begin{align*}  &L(\w, (y,\tilde z_{1,\ell}), (x,\bar z_{1,\ell}))
%- L(\w, (y,\tilde z_{1,\ell}), (0,z_{1,\ell})) \\
%&\qquad = L(\w, (y',\tilde z'_{1,\ell}), (x,\bar z_{1,\ell}))
%- L(\w, (y',\tilde z'_{1,\ell}), (0,z_{1,\ell})).  \end{align*}
%Thus  definition \eqref{deff}  is independent of the choice
%of  $(y,\tilde z_{1,\ell})$. 

Here are some basic properties of $f$. We postpone the proof of this lemma to the end of this section.

\begin{lemma}\label{f properties} Same setting as Lemma \ref{CLASS K}. 
\begin{itemize}
\item[{\rm (a)}]  There exists a constant $C$ depending only on $d$, $\ell$, and $R=\max\{|z|:z\in\rrange\}$, such that we have for all
$z_{1,\ell},\zbar_{1,\ell}\in\rrange^\ell$, $x\in\Z^d$, and $\P$-a.e.\ $\w$,
 	\[|f(\w,z_{1,\ell},\zbar_{1,\ell},x)|\le\max_{\substack{\ztil_{1,\ell}\in\rrange^\ell\\z\in\rrange}}\sum_{b:|b|\le C|x|}|F(T_b\w,\ztil_{1,\ell},z)|.\]
%However, by picking paths for each of the canonical basis vectors $e_i$ and then concatenating them , 
%one can see that 
\item[{\rm (b)}]  The closed loop property of $F$ implies that for any $z_{1,\ell},\zbar_{1,\ell}\in\rrange^\ell$, $x,\xbar\in\Z^d$, and $\P$-a.e.\ $\w$, 
%Let  $(y,\tilde z_{1,\ell})$ have    a path to  both $(x,\bar z_{1,\ell})$ 
%and $(\bar x,\bar z_{1,\ell})$.  Then 
 \begin{align*} 
f(T_x\w, \zbar_{1,\ell}, \zbar_{1,\ell}, \xbar-x)
&=f(\w, \zbar_{1,\ell}, \zbar_{1,\ell}, \xbar) -  f(\w, \zbar_{1,\ell}, \zbar_{1,\ell}, x)\\
%&\quad  = L(\w, (y,\tilde z_{1,\ell}), (\bar x,\bar z_{1,\ell}))
%- L(\w, (y,\tilde z_{1,\ell}), (x,\bar z_{1,\ell}))\\
&=f(\w, z_{1,\ell}, \zbar_{1,\ell}, \xbar) -  f(\w, z_{1,\ell}, \zbar_{1,\ell}, x).
\end{align*}
\item[{\rm (c)}]  The mean zero property of $F$ implies that for any $\zbar_{1,\ell}\in\rrange^\ell$ and $x\in\Z^d$, $\E[f(\w, \zbar_{1,\ell},  \zbar_{1,\ell}, x)]=0.$
\end{itemize}
\end{lemma}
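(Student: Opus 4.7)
My plan is to derive each assertion of the lemma from the corresponding property (i)--(iii) of the class $\K^p$, using two elementary facts about $L$ from \eqref{defL}: it is additive under concatenation of paths (a direct consequence of its definition combined with the closed loop property, which ensures $L$ is well-defined), and it transforms under lattice shifts via \eqref{shiftL}. Throughout I will use \eqref{RWRE-elliptic} to guarantee existence of paths with the required endpoints, and \eqref{RWRE-compact} to ensure $|\rrange|<\infty$.

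For part (a), I would choose the auxiliary base point $(y,\ztil_{1,\ell})$ of minimal complexity. By the constructions reviewed just before \eqref{defL}, one can pick $|y|\le C(|x|+1)$ together with paths from $(T_y\w,\ztil_{1,\ell})$ to $(\w,z_{1,\ell})$ and to $(T_x\w,\zbar_{1,\ell})$ of length at most $C(|x|+1)$, with every intermediate position $b\in\Z^d$ satisfying $|b|\le C|x|$, for a constant $C$ depending only on $d$, $\ell$, and $\max_{z\in\rrange}|z|$. Summing $|F|$ along the two paths bounds $|f|$ by $\sum_{b:|b|\le C|x|}\sum_{\ztil,z}|F(T_b\w,\ztil,z)|$; since $|\rrange^\ell|\cdot|\rrange|<\infty$ by \eqref{RWRE-compact}, the double sum over $(\ztil,z)$ can be absorbed into $C$, allowing the maximum to be pulled outside and producing the stated bound.

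For part (b), I would fix one base point $(y,\ztil_{1,\ell})$ admitting paths to each of $(0,z_{1,\ell})$, $(0,\zbar_{1,\ell})$, $(x,\zbar_{1,\ell})$, and $(\xbar,\zbar_{1,\ell})$. In either of the differences
\begin{align*}
&f(\w,\zbar_{1,\ell},\zbar_{1,\ell},\xbar)-f(\w,\zbar_{1,\ell},\zbar_{1,\ell},x),\\
&f(\w,z_{1,\ell},\zbar_{1,\ell},\xbar)-f(\w,z_{1,\ell},\zbar_{1,\ell},x),
\end{align*}
the common base contribution $L(\w,(y,\ztil_{1,\ell}),(0,\mydot))$ cancels, leaving $L(\w,(y,\ztil_{1,\ell}),(\xbar,\zbar_{1,\ell}))-L(\w,(y,\ztil_{1,\ell}),(x,\zbar_{1,\ell}))$. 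Path concatenation (again the closed loop property of $F$) collapses this to $L(\w,(x,\zbar_{1,\ell}),(\xbar,\zbar_{1,\ell}))$, which by \eqref{shiftL} equals $L(T_x\w,(0,\zbar_{1,\ell}),(\xbar-x,\zbar_{1,\ell}))$. Choosing $(y',\ztil')=(0,\zbar_{1,\ell})$ in the defining formula for $f(T_x\w,\zbar_{1,\ell},\zbar_{1,\ell},\xbar-x)$ and noting that $L(T_x\w,(0,\zbar_{1,\ell}),(0,\zbar_{1,\ell}))=0$ by the closed loop property identifies this with $f(T_x\w,\zbar_{1,\ell},\zbar_{1,\ell},\xbar-x)$.

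For part (c), the same choice $(y,\ztil)=(0,\zbar_{1,\ell})$ gives $f(\w,\zbar_{1,\ell},\zbar_{1,\ell},x)=L(\w,(0,\zbar_{1,\ell}),(x,\zbar_{1,\ell}))$, a sum of $F$ along a path from $(\w,\zbar_{1,\ell})$ to $(T_x\w,\zbar_{1,\ell})$ that returns to the identical internal coordinate. This is precisely the configuration covered by axiom (ii) of Definition \ref{cK-def}, so taking $\E$ yields $0$. The main technical point is the simultaneous radius-and-length control in part (a), where \eqref{RWRE-elliptic} is essential for producing paths of length $O(|x|)$ confined to a ball of radius $O(|x|)$; parts (b) and (c) are then clean bookkeeping with path concatenation and the respective properties of $F$.
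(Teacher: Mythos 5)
Your overall strategy is the right one: everything is derived from the path-sum $L$ in \eqref{defL}, its shift covariance \eqref{shiftL}, and the closed-loop/mean-zero properties of $F$. Part (a) and the second equality of (b) are argued essentially as in the paper. But there is a genuine hypothesis mismatch in your treatment of the first equality of (b) and of (c). You announce that you will ``use \eqref{RWRE-elliptic} to guarantee existence of paths with the required endpoints,'' whereas Lemma \ref{f properties} inherits only the hypotheses of Lemma \ref{CLASS K}, namely \eqref{RWRE-compact}, \eqref{RWRE-span} and \eqref{RWRE-moment} --- not \eqref{RWRE-elliptic}. The paper is deliberate about this separation (see Section \ref{prelim}: ellipticity is invoked only in Lemma \ref{F-lemma}). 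Under \eqref{RWRE-span} alone the semigroup generated by $\range$ need not be all of $\Z^d$ (take $\range=\{e_1,e_2\}$ in $\Z^2$), so there is in general \emph{no} forward path from a given $(u,\cdot)$ to a given $(v,\cdot)$; the only thing guaranteed is the common-starting-point construction explained before \eqref{defL}.

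Concretely: in (c) you take $(y,\ztil_{1,\ell})=(0,\zbar_{1,\ell})$, which requires a forward path from $(0,\zbar_{1,\ell})$ to $(x,\zbar_{1,\ell})$; and in (b) the ``collapse'' of $L(\w,(y,\ztil_{1,\ell}),(\xbar,\zbar_{1,\ell}))-L(\w,(y,\ztil_{1,\ell}),(x,\zbar_{1,\ell}))$ into the single term $L(\w,(x,\zbar_{1,\ell}),(\xbar,\zbar_{1,\ell}))$, and the identification with $f(T_x\w,\zbar_{1,\ell},\zbar_{1,\ell},\xbar-x)$ via the base $(0,\zbar_{1,\ell})$, require forward paths from $(x,\zbar_{1,\ell})$ to $(\xbar,\zbar_{1,\ell})$ and from $(0,\zbar_{1,\ell})$ to $(\xbar-x,\zbar_{1,\ell})$. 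None of these exist under \eqref{RWRE-span} alone. The paper circumvents this: in (c) it uses an auxiliary base $(y,\zbar_{1,\ell})$ with paths to both $(0,\zbar_{1,\ell})$ and $(x,\zbar_{1,\ell})$ and applies property (ii) of Definition \ref{cK-def} to each $L$-term separately, never reducing $f$ to a single $L$; in (b) it introduces a nested common base $(y',\ztil'_{1,\ell})$ with paths to $(y,\ztil_{1,\ell})$ and $(0,z_{1,\ell})$ so that all cancellations happen ``downstream'' from common bases, without ever needing to travel from $x$ to $\xbar$. Your argument would be fine if \eqref{RWRE-elliptic} were assumed --- and it is in Theorem \ref{main} --- but as written it does not prove the lemma under its stated hypotheses, and it would undo the paper's stated isolation of where ellipticity is used.
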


Next, extend $f$ to a continuous function of $\xi\in\R^d$ by linear interpolation.
Here is one way to do that. Recall that $\{e_1,\dotsc,e_d\}$ is the canonical basis of $\R^d$.
Introduce the following notation: for $p\in[0,1]$ and $i\in\{1,\dotsc,d\}$, let $\Ber_i(p)$ be
a Bernoulli random variable with parameter $p$. For a vector  
$p=(p_1,\dotsc,p_d)\in[0,1]^d$, let
$\Ber(p)=\sum_{i=1}^d \Ber_i(p_i) e_i$ with $(\Ber_i(p_i))$ independent.

Now, for given $\wz$, $\bar z_{1,\ell}$, and 
$\xi=\sum_{i=1}^d \xi_i e_i$, let $[\xi]=\sum_{i=1}^d[\xi_i] e_i$, 
where $[\xi_i]$ is the largest integer smaller than or equal to $\xi_i$, and define
	\[f(\wz,\bar z_{1,\ell},\xi)=E[f(\wz,\bar z_{1,\ell},[\xi]+\Ber(\xi-[\xi]))].\]
%%\xi=E[[\xi]+Ber(\xi-[\xi])] is the way to write \xi as a convex combination of the corners
%%of its cell!
%f is continuous because crossing the boundary we either have x+\Ber(t) with 
%t_i=\e or 1-\e for the "crossing coordinates" or, on the other side, x\pm e_i +\Ber(t')
%(+e_i when t_i=1-\e and -e_i when t_i=\e) with t'=t except for t_i'=1-\e 
%where t_i=\e and t'_i=\e when t_i=1-\e.  but then \Ber(t) and \Ber(t')\pm e_i \to \Ber(t'') 
%where t''=t'=t except for t''_i=0 when t_i=\e (t'_i=1-\e and BER(1)-1=BER(0)) 
%and 1 when t_i=1-\e (t'_i=\e and BER(0)+1=BER(1)).

Think of $f$ as a collection of
functions of $(\w,\xi)$. The idea is to homogenize these
functions by showing that, for fixed $z_{1,\ell}$ and $\bar z_{1,\ell}$ and for $\P$-a.e.\ $\w$,
$g_n(\w,z_{1,\ell},\bar z_{1,\ell},\xi)=n^{-1}f((\w,z_{1,\ell}),\bar z_{1,\ell},n\xi)$ is equicontinuous and hence 
converges, uniformly on compacts and along a subsequence, to a function
$g(\w,z_{1,\ell},\bar z_{1,\ell},\xi)$.
Next, one shows that $g$ has to be constant and since $g(\w,z_{1,\ell},\bar z_{1,\ell},0)=0$ 
we conclude that
$g_n$ converges uniformly on compacts to 0. Observe now that if $\wz_0=(\w,z_{1,\ell})$ and $\wz_{k+1}=\Sopr_{a_{k+1}}\wz_k$ for $0\le k\le n-1$ and $a_k\in\rrange$,
then
	\[n^{-1}\sum_{k=0}^{n-1} F(\wz_k,a_{k+1})=g_n(\w,z_{1,\ell},\zbar_{1,\ell},\xi),\]
where $\xi=(x_\ell+a_1+\cdots+a_{n-\ell})/n$ and $\zbar_{1,\ell}=(a_{n-\ell+1},\dotsc,a_n)$. Thus,
\begin{align*}
&\max_{(a_1,\cdots,a_n)\in\rrange^n}
%&\max_{\substack{(a_1,\cdots,a_n)\in\rrange^n\\
%\wz_0,\cdots,\wz_{n-1}\\ 
%\wz_0=(\w,z_{1,\ell}),\wz_i=\Sopr_{a_i}\wz_{i-1}}}
\Big|n^{-1}\sum_{k=0}^{n-1} F(\wz_k,a_{k+1})\Big|%\\
%&\qquad\qquad\qquad\qquad
\le\max_{\zbar_{1,\ell}\in\rrange^\ell}\sup_{\xi:|\xi|\le R} |g_n(\w,z_{1,\ell},\zbar_{1,\ell},\xi)|,
\end{align*}
where $R=\max\{|z|:z\in\rrange\}$. This completes the proof of the lemma.

The above strategy was introduced by  Kosygina, Rezakhanlou, and Varadhan \citep{kosy-reza-vara-06} in the context of
diffusions with random drift, then carried out by Rosenbluth \citep{rose-thesis} for random walk in random environment in the case $\ell=0$. 
Equicontinuity follows from an application of the Garsia-Rodemich-Rumsey theorem (see \citep{stro-vara})
which requires the moment assumption on $F$. The fact that $g$ is constant follows from
an application of the ergodic theorem along with the mean 0 property of $F$. We present
the proof, adapted to our setting, for the sake of completeness. 

Let us start with equicontinuity. This will be shown by breaking the space into two parts.
Each of the following two lemmas covers one part.
Let us denote $B_r(\xi)=\{\zeta\in\R^d:|\zeta-\xi|\le r\}$. 

\begin{lemma}
Same assumptions on $\P$ and $F$ as in Lemma \ref{CLASS K}. 
Then, for any $r\ge1$ and any
$\gamma\in(0,d+p)$,  one has that
for $\P$-a.e.\ $\w$ and all $z_{1,\ell},\bar z_{1,\ell}\in\rrange^{\ell}$
\[\varlimsup_{n\to\infty} 
\int_{B_r(0)}\int_{B_{2d/n}(\xi)\cap B_r(0)}
\frac{|g_n(\w,z_{1,\ell},\bar z_{1,\ell},\xi)-
g_n(\w,z_{1,\ell},\bar z_{1,\ell},\zeta)|^p}{|\xi-\zeta|^\gamma}\,d\zeta\, d\xi=0.\]
\end{lemma}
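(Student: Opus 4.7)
The plan is to establish a local Lipschitz-type bound for the map $\xi\mapsto g_n(\w,z_{1,\ell},\zbar_{1,\ell},\xi)$, valid on the scale $|\xi-\zeta|\le 2d/n$, and then to combine this with Wiener's multi-dimensional ergodic theorem. First I would show that for any lattice points $y,y'\in\Z^d$ with $|y-y'|$ bounded by a fixed constant, parts (b) and (a) of Lemma \ref{f properties} give
\begin{align*}
|f(\wz,\zbar_{1,\ell},y)-f(\wz,\zbar_{1,\ell},y')|=|f(T_{y'}\w,\zbar_{1,\ell},\zbar_{1,\ell},y-y')|\le C\sum_{|b|\le C}\max_{\ztil\in\rrange^\ell,\,z\in\rrange}|F(T_{y'+b}\w,\ztil,z)|.
\end{align*}
Since $f(\wz,\zbar_{1,\ell},\cdot)$ is extended from $\Z^d$ to $\R^d$ by the multilinear Bernoulli interpolation, on each unit cube its gradient is a convex combination of nearest-neighbor lattice differences, each controlled as above, and along a polygonal path it is absolutely continuous. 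Consequently, whenever $\xi,\zeta\in B_r(0)$ with $|\xi-\zeta|\le 2d/n$---so that $|n\xi-n\zeta|\le 2d$ and the segment from $n\xi$ to $n\zeta$ crosses only a bounded number of unit cells---one obtains
\begin{align*}
|g_n(\w,z_{1,\ell},\zbar_{1,\ell},\xi)-g_n(\w,z_{1,\ell},\zbar_{1,\ell},\zeta)|\le C\,|\xi-\zeta|\,M_n(\xi,\w),
\end{align*}
where $M_n(\xi,\w):=\max\{|F(T_b\w,\ztil,z)|:\ztil\in\rrange^\ell,\,z\in\rrange,\,b\in\Z^d,\,|b-n\xi|\le C\}$.

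Next I would insert this bound into the double integral and change variables $\eta=\xi-\zeta$ to dominate the expression by
\begin{align*}
C^p\int_{B_r(0)}M_n(\xi,\w)^p\,d\xi\cdot\int_{|\eta|\le 2d/n}|\eta|^{p-\gamma}\,d\eta\le C_r\,n^{-(d+p-\gamma)}\int_{B_r(0)}M_n(\xi,\w)^p\,d\xi,
\end{align*}
using that the $\eta$-integral is finite precisely because $\gamma<d+p$ and has order $n^{-(d+p-\gamma)}$. Substituting $y=n\xi$ in the outer integral bounds it by $n^{-d}\sum_{u\in\Z^d,\,|u|\le nr+C}G(T_u\w)$ with $G(\w):=\max_{\ztil,z,\,|b|\le C}|F(T_b\w,\ztil,z)|^p$. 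Since $|\rrange|<\infty$ and the $\K^p$-moment condition (i) of Definition \ref{cK-def} give $\E[G]<\infty$, the multi-parameter ergodic theorem (Appendix 14.A of \citep{geor}) yields $n^{-d}\sum_{|u|\le nr}G(T_u\w)\to c_r\,\E[G]$ for $\P$-a.e.\ $\w$. The whole expression is thus dominated by $C_r(\w)\,n^{-(d+p-\gamma)}\to 0$.

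The main obstacle I expect is the first step: verifying rigorously that the multilinear extension of $f$ has its gradient controlled by finitely many $F$-values in a bounded neighborhood of $n\xi$. This rests essentially on the closed-loop property (b) of Lemma \ref{f properties}, which converts $f$-differences into $F$-path-integrals of length $|y-y'|$, together with the pointwise bound (a). Everything afterwards is routine calculation plus Wiener's ergodic theorem, and the restriction $\gamma<d+p$ enters precisely through the integrability of $|\eta|^{p-\gamma}$ near the origin.
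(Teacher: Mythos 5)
Your proposal matches the paper's proof in essence.  Both arguments rest on the same local Lipschitz bound $|f(\zeta)-f(\xi)|\le C|\zeta-\xi|\max_{\tilde z,z}\max_{|x-[\xi]|\le C}|F(T_x\w,\tilde z,z)|$ obtained from parts (a) and (b) of Lemma \ref{f properties} — the paper derives it by explicitly telescoping the Bernoulli-interpolation weights within a cell and then chaining through a bounded number of cells, you by observing that the multilinear interpolant has partial derivatives equal to convex combinations of nearest-neighbor lattice differences and integrating along a polygonal path (these are equivalent) — and both then exploit $\gamma<d+p$ to integrate out $|\xi-\zeta|^{p-\gamma}$ and invoke the multiparameter ergodic theorem to control the remaining spatial average of $G(\w)=\max_{\tilde z,z,|b|\le C}|F(T_b\w,\tilde z,z)|^p$.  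The only cosmetic difference is that the paper rescales $\xi\mapsto n\xi$, $\zeta\mapsto n\zeta$ at the outset while you keep the original variables and rescale only in the final substitution $y=n\xi$; the two bookkeepings are equivalent and both yield the decay $n^{-(d+p-\gamma)}$.
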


\begin{proof}
Changing variables, the above integral can be rewritten as
\begin{align}
&\frac1{n^p}\int_{B_r(0)}\int_{B_{2d/n}(\xi)\cap B_r(0)}
\frac{|f(n\xi)-f(n\zeta)|^p}{|\xi-\zeta|^\gamma}\,d\zeta\, d\xi\nn\\
&=
\frac1{n^{2d+p-\gamma}}\int_{B_{rn}(0)}\int_{B_{2d}(\xi)\cap B_{rn}(0)}
\frac{|f(\xi)-f(\zeta)|^p}{|\xi-\zeta|^\gamma}\,d\zeta\, d\xi,\label{integral1}
\end{align}
where we dropped $\w$, $z_{1,\ell}$, and $\bar z_{1,\ell}$ from the arguments of $f$ for the moment.

Observe next that if $\zeta$ is on the boundary of a $\Z^d$-cell, i.e.\ 
$\zeta_i\in\Z$ for some $i\in\{1,\dotsc,d\}$, then the fact that $\zeta_i+B_i(0)$ has the
same distribution as $\zeta_i-1+B_i(1)$ shows that
one can set $[\zeta_i]$ to be either $\zeta_i$ or $\zeta_i-1$ and the value of $f$
at $\zeta$ would not be affected. 

Therefore, if $\xi$ and $\zeta$ belong to the same $\Z^d$-cell, we can
assume that $[\xi_i]=[\zeta_i]=x$, the lower left corner of the cell. Abbreviate $p_i=\zeta_i-x_i$ and
$q_i=\xi_i-x_i$. Then
\begin{align*}
&|f(\zeta)-f(\xi)|=|E[f(x+\Ber(\zeta-x))-f(x+\Ber(\xi-x))]|\\
&=
\Bigg|\sum_{(b_i)\in\{0,1\}^d}
\Big[\prod_i p_i^{b_i}(1-p_i)^{1-b_i} -
\prod_i q_i^{b_i}(1-q_i)^{1-b_i}\Big]
f\Big(x+\sum_i b_i e_i\Big)\Bigg|\\
&=
\Bigg|\sum_{(b_i)\in\{0,1\}^d}
\Big[\prod_i p_i^{b_i}(1-p_i)^{1-b_i} -
\prod_i q_i^{b_i}(1-q_i)^{1-b_i}\Big]
\Big[f\Big(x+\sum_i b_i e_i\Big)-f(x)\Big]\Bigg|\\
&\le
\sum_{(b_i)\in\{0,1\}^d}
\Big|\prod_i p_i^{b_i}(1-p_i)^{1-b_i} -
\prod_i q_i^{b_i}(1-q_i)^{1-b_i}\Big|\cdot
\Big|f\Big(x+\sum_i b_i e_i\Big)-f(x)\Big|\\
&\le
C\, |\zeta-\xi|\sum_{(b_i)\in\{0,1\}^d}
\Big|f\Big(x+\sum_i b_i e_i\Big)-f(x)\Big|\,,
\end{align*}
where we have used the fact that for $a,b,c,d\in[0,1]$,
\[|ab-cd|\le |(a-c)b|+|(b-d)c|\le|a-c|+|b-d|.\]
%each time, we either have |p_i-q_i| or |(1-p_i)-(1-q_i)|=|p_i-q_i|
%and that on $\R^d$, the norm $\sum_{i=1}^d|\zeta_i|$ is equal to $|\zeta|$.

If, on the other hand, $\xi$ and $\zeta$ are in two different $\Z^d$-cells then, 
since $|\xi-\zeta|\le 2d$,
there exist points $\zeta_0,\cdots,\zeta_m$, with $m\le C$, such that
$\zeta_0=\xi$, $\zeta_m=\zeta$,
each two consecutive ones belong to the same $\Z^d$-cell, and 
$|\zeta_{k+1}-\zeta_k|\le C\,|\zeta-\xi|$.
%the latter is achieved, e.g., by choosing  (\zeta_1)_i to be the same as \xi_i,
%except for one z where it is the next integer from \zeta_i toward \xi_i, then
%\zeta_2 will be the same way, but choosing the second-next, etc, till one reaches
%the last possible integer (closest to \xi_i, between \zeta_i and \xi_i)
%after that, start changing the other z-coordinates.  each time, the \Z^d-\ell^1 
%norm is less than |\zeta_i-\xi_i|, for some z\in\rrange, which is less than
%C|\xi-\zeta|
%alternatively, just draw a line between \zeta and \xi and choose points on the
%line that are in the same cell. there cannot be too many of these, since there
%are at most 2d cells!
One can then write
\begin{align*}
|f(\zeta)-f(\xi)|&\le C\sum_{k=0}^{m-1}|\zeta_{k+1}-\zeta_k|
\sum_{(b_i)\in\{0,1\}^d}
\Big|f\Big([\zeta_k]+\sum_i b_i e_i\Big)-f([\zeta_k])\Big|\\
&\le C\,|\zeta-\xi|\sum_{k=0}^{m-1}
\sum_{(b_i)\in\{0,1\}^d}
\Big|f\Big([\zeta_k]+\sum_i b_i e_i\Big)-f([\zeta_k])\Big|\\
&= C\,|\zeta-\xi|\sum_{k=0}^{m-1}
\sum_{(b_i)\in\{0,1\}^d}
\Big|f\Big(T_{[\zeta_k]}\w,\zbar_{1,\ell},\zbar_{1,\ell},\sum_i b_i e_i\Big)\Big|,
\end{align*}
where we have used part (b) of Lemma \ref{f properties}.
Furthermore, using part (a) of the same lemma, and that $|\zeta_k-[\xi]|\le C\,|\xi-\zeta|\le C$, we have
\[|f(\zeta)-f(\xi)|\le C\,|\zeta-\xi|\max_{\tilde z_{1,\ell}\in\rrange^{\ell}}\max_{z\in\rrange}
\max_{x:|x-[\xi]|\le C}|F(T_x\w,\tilde z_{1,\ell},z)|.\]
Since $d+p>\gamma$, one has that $\int_{B_{2d}(\xi)}|\zeta-\xi|^{p-\gamma}d\zeta<\infty$.
Setting,
\begin{align}
\label{G}
G(\w)=\max_{\tilde z_{1,\ell}\in\rrange^{\ell}}\max_{z\in\rrange}
\max_{x:|x|\le C}|F(T_x\w,\tilde z_{1,\ell},z)|^p\in L^1(\P),
\end{align}
integral \eqref{integral1} is then bounded by
\[C\,n^{\gamma-d-p}\Big(n^{-d}\sum_{y:|y|\le rn}G(T_y\w)\Big).\]
The lemma follows since $d+p>\gamma$ and, by the ergodic theorem
(see for example Theorem 14.A8 in \citep{geor}), the quantity 
in parentheses converges to a finite constant.
\end{proof}

\begin{lemma}
Same assumptions on $\P$ and $F$ as in Lemma \ref{CLASS K}. 
Then, for any $r\ge1$ and any $\gamma\in(d+p-1,d+p)$,
there exists a constant $C_r$ such that 
for $\P$-a.e.\ $\w$ and all $z_{1,\ell},\bar z_{1,\ell}\in\rrange^{\ell}$
\[\varlimsup_{n\to\infty} 
\int_{B_r(0)}\int_{B_r(0)\smallsetminus B_{2d/n}(\xi)}
\frac{|g_n(\w,z_{1,\ell},\bar z_{1,\ell},\xi)-
g_n(\w,z_{1,\ell},\bar z_{1,\ell},\zeta)|^p}{|\xi-\zeta|^\gamma}\,d\zeta\, d\xi\le C_r\,.\]
\end{lemma}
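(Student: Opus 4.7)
The plan is to rescale the integral to the lattice scale, bound $|f(\xi)-f(\zeta)|$ pointwise by a local maximum of $|F|$ accessible to the ergodic theorem, and sum over a dyadic decomposition in $|\xi-\zeta|$. First I would change variables $\xi\mapsto n\xi$ and $\zeta\mapsto n\zeta$, which turns the target integral into
\[
n^{\gamma-2d-p}\int_{B_{rn}(0)}\int_{B_{rn}(0)\setminus B_{2d}(\xi)}\frac{|f(\w,z_{1,\ell},\bar z_{1,\ell},\xi)-f(\w,z_{1,\ell},\bar z_{1,\ell},\zeta)|^p}{|\xi-\zeta|^\gamma}\,d\zeta\,d\xi.
\]

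On the set $|\xi-\zeta|\ge 2d$ I would derive the pointwise bound
\[
|f(\w,z_{1,\ell},\bar z_{1,\ell},\xi)-f(\w,z_{1,\ell},\bar z_{1,\ell},\zeta)|\le \max_{|b-[\zeta]|\le C|\xi-\zeta|}G(T_b\w),
\]
where $G(\w)=\max_{\tilde z\in\rrange^\ell,\,z\in\rrange}|F(\w,\tilde z,z)|$ satisfies $\E[G^p]<\infty$ by Definition \ref{cK-def}(i) together with the finiteness of $\rrange$ and $\rrange^\ell$. Part (b) of Lemma \ref{f properties} rewrites each integer-lattice difference $f(\w,\bar z_{1,\ell},\bar z_{1,\ell},[\xi]+B_1)-f(\w,\bar z_{1,\ell},\bar z_{1,\ell},[\zeta]+B_2)$, with $B_1,B_2$ the independent Bernoulli vectors of the linear interpolation, as the single shifted value $f(T_{[\zeta]+B_2}\w,\bar z_{1,\ell},\bar z_{1,\ell},[\xi]-[\zeta]+B_1-B_2)$; Jensen moves the absolute value inside the expectation over $(B_1,B_2)$; and part (a) of Lemma \ref{f properties} bounds this by a max of $|F|$ over a lattice ball of radius $O(|\xi-\zeta|)$ around $[\zeta]+B_2$, which is absorbed into the right-hand side above.

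Decomposing $\{|\xi-\zeta|\ge 2d\}$ into dyadic shells $A_k=\{2^k\le|\xi-\zeta|\le 2^{k+1}\}$ for $k_0\le k\le\lceil\log_2 2rn\rceil$, the integrand on $A_k$ is at most $2^{-k\gamma}\max_{|b-[\zeta]|\le C2^k}G(T_b\w)^p$. A Fubini and counting argument gives $\int_\zeta \max_{|b-[\zeta]|\le C2^k}G(T_b\w)^p\,d\zeta\le C 2^{kd}\sum_{|b|\le 2rn}G(T_b\w)^p$; the $d\xi$ integration over the ball of radius $2^{k+1}$ around $\zeta$ contributes a further $C2^{kd}$; and the multidimensional ergodic theorem applied to $G^p\in L^1(\P)$ yields $\sum_{|b|\le 2rn}G(T_b\w)^p\le C_r n^d$, $\P$-a.s., for all large $n$. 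Hence $A_k$ contributes at most $C_r n^d 2^{k(2d-\gamma)}$. Summing over $k$ (separating the cases $\gamma<2d$, $\gamma=2d$, $\gamma>2d$) and multiplying by $n^{\gamma-2d-p}$ produces a bound of order $n^{-\min(p-d,\,d+p-\gamma)}$, with an extra logarithmic factor at $\gamma=2d$, and this tends to $0$ by the hypotheses $p>d$ and $\gamma<d+p$; in particular the lim sup is bounded by some $C_r$. The hard part will be the pointwise bound step, where the Bernoulli interpolation must be unraveled carefully so that part (b) of Lemma \ref{f properties} can be invoked inside the expectation before absolute values destroy the needed cancellation.
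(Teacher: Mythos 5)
Your overall architecture (change of variables, a pointwise bound on $|f(\xi)-f(\zeta)|$ via Lemma~\ref{f properties}, a dyadic decomposition in $|\xi-\zeta|$, and an ergodic--theorem averaging) is in the right spirit, and the change of variables is exactly the paper's first step. But the crucial pointwise bound is wrong, and the error propagates into a final answer that cannot be right.

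You claim $|f(\w,z_{1,\ell},\bar z_{1,\ell},\xi)-f(\w,z_{1,\ell},\bar z_{1,\ell},\zeta)|\le\max_{|b-[\zeta]|\le C|\xi-\zeta|}G(T_b\w)$. Lemma~\ref{f properties}(a) does not give a \emph{max}; it gives a \emph{sum} over all lattice points $b$ with $|b|\le C|x|$, i.e.\ roughly $|x|^d$ terms. There is no version of part~(a) that bounds $|f(\cdot,x)|$ by a single shifted value of $F$: the function $f$ is a corrector-type object that grows linearly in $|x|$ (think of a telescoping sum of $F$-increments along a path of length $\sim|x|$), so $|f(\xi)-f(\zeta)|$ is typically of order $|\xi-\zeta|$, not order $1$. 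Your shell estimate $2^{-k\gamma}\max_{|b-[\zeta]|\le C2^k}G(T_b\w)^p$ is therefore short by a factor of roughly $2^{kp}$, and the resulting power count $n^{-\min(p-d,\,d+p-\gamma)}\to0$ is too good. Indeed, the correct conclusion is that the limsup is a positive constant $C_r$ (as the lemma asserts): the right-hand side ends up as $C_r\,n^{-d}\sum_{|s|\le C_r n}G(T_s\w)$, which by the ergodic theorem converges to $C_r\,\E[G]>0$. Your argument, if it were correct, would contradict this.

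What the paper actually does with the $|\xi-\zeta|\ge 2d$ region is structurally different from a crude ball bound. It first splits $|f(\xi)-f(\zeta)|$ into the lattice difference $|f([\xi])-f([\zeta])|$ plus two interpolation errors (those are handled as in the complementary $|\xi-\zeta|\le 2d/n$ lemma, where your ball bound would be fine because the ball has bounded radius). For the lattice difference, it uses the telescoping path bound $|f(x)-f(y)|^p\le C\,m^{p-1}\sum_{i=1}^m G(T_{x_i}\w)$ with $m\le C|x-y|$ (H\"older along the path), and then the key step: a geometric count $A_{s,n,\gamma}=\sum_{x\ne y}|x-y|^{p-1-\gamma}\one\{s\text{ on the canonical path from }x\text{ to }y\}\le C_r n^{d+p-\gamma}$, which is where the argument with the two radii $\rho_1,\rho_2$ and the near-collinearity constraint comes in. This path-through-$s$ counting has no analogue in your proposal; the Fubini step $\int_\zeta \max_{|b-[\zeta]|\le C2^k}G^p(T_b\w)\,d\zeta\le C2^{kd}\sum_{|b|\le 2rn}G^p(T_b\w)$ is simply not available once you use the correct (sum along a $(\xi,\zeta)$-dependent path) bound, because the $\zeta$-integration no longer factors. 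To repair your argument you would need to replace the max-over-ball bound by the path/H\"older bound and supply the nontrivial counting of paths through a fixed lattice site; at that point you have reconstructed the paper's proof.
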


\begin{proof}
Once again, changing variables the above integral becomes
\begin{align}
\frac1{n^{2d+p-\gamma}}\int_{B_{rn}(0)}\int_{B_{rn}(0)\smallsetminus B_{2d}(\xi)}
\frac{|f(\xi)-f(\zeta)|^p}{|\xi-\zeta|^\gamma}\,d\zeta\, d\xi.\label{integral2}
\end{align}

Write
%\begin{align*}
%|f(\xi)-f(\zeta)|\le \ &|f([\xi])-f([\zeta])|\\
%&+\sum_{(b_i)\in\{0,1\}^{d}}\Big|f\Big([\xi]+\sum_i b_i e_i\Big)-f([\xi])\Big|\\
%&+\sum_{(b_i)\in\{0,1\}^{d}}\Big|f\Big([\zeta]+\sum_i b_i e_i\Big)-f([\zeta])\Big|.
%\end{align*}
\begin{align*}
|f(\xi)-f(\zeta)|\le \ &|f([\xi])-f([\zeta])|\\
&+\sum_{(b_i)\in\{0,1\}^{d}}\Big|f\Big([\xi]+\sum_i b_i e_i\Big)-f([\xi])\Big|
+\sum_{(b_i)\in\{0,1\}^{d}}\Big|f\Big([\zeta]+\sum_i b_i e_i\Big)-f([\zeta])\Big|.
\end{align*}
Observing that $\gamma>1$, $\gamma<d+p$, $|\xi-\zeta|\ge 2d$, and
$|[\zeta]|\le C|\zeta|\le C_r n$, the second and third terms above are 
dealt with exactly as in the previous lemma (using the ergodic theorem). 
%integral over other variable is bounded by a constant.
%then we get n^{-(d+p-\gamma)} n^{-d}\sum_{|x|\le C_r n} G(T_x\w)
For example, 
	\begin{align*}
		&\frac1{n^{2d+p-\gamma}}\int_{B_{rn}(0)}\int_{B_{rn}(0)\smallsetminus B_{2d}(\xi)}
		\frac{\Big|f\Big([\xi]+\sum_i b_i e_i\Big)-f([\xi])\Big|^p}{|\xi-\zeta|^\gamma}\,d\zeta\, d\xi\\
		&\qquad\le\frac C{n^{2d+p-\gamma}}\int_{B_{rn}(0)}\Big|f\Big([\xi]+\sum_i b_i e_i\Big)-f([\xi])\Big|^p\,d\xi\\
		&\qquad\le C{n^{\gamma-d-p}}\Big(n^{-d}\sum_{y:|y|\le rn}G(T_y\w)\Big).
	\end{align*}

Observe next that since $|\xi-\zeta|\ge2d$, $1\le|[\xi]-[\zeta]|\le C\, |\xi-\zeta|$ and we are reduced to bounding
the sum
\begin{align*}
\frac1{n^{2d+p-\gamma}}\sum_{\substack{x,y:x\ne y\\ |x|,|y|\le rn}}
\frac{|f(x)-f(y)|^p}{|x-y|^\gamma}.
\end{align*}
Now, $|f(x)-f(y)|^p\le C\,m^{p-1}\sum_{i=1}^m G(T_{x_i}\w)$, where $G$ was defined in \eqref{G} and
$(x_i)$ is any path in $\Z^d$
from $x$ to $y$, with length $m\le C\,|x-y|$. If one chooses canonical paths
that go from each $x$ to each $y$ and that stay as close as possible to the line connecting
$x$ and $y$, e.g.\ staying at distance less than $d$ from the line,
%it always possible to choose the increments in such a way that the point is
%always a corner of a box with which the line intersects.
then the above sum is bounded by
\begin{align*}
\frac1{n^{2d+p-\gamma}}\sum_{s:|s|\le C_r n} A_{s,n,\gamma} 
G(T_s\w),
\end{align*}
where 
\[A_{s,n,\gamma}=\sum_{\substack{x,y:x\ne y\\ |x|,|y|\le rn}}
|x-y|^{p-1-\gamma}\one\{s\text{ is on the canonical path from $x$ to $y$}\}.\]
Consider a fixed $s$. For a given integer $\rho_1$, there are at most $C_r\rho_1^{d-1}$ 
$x$'s such that $|x-s|=\rho_1$. Fix such an $x$. See Figure \ref{y-count}.
\begin{figure}[tb]
%\blankbox{.6\columnwidth}{5pc}
%\centering
\includegraphics[width=0.4\textwidth]{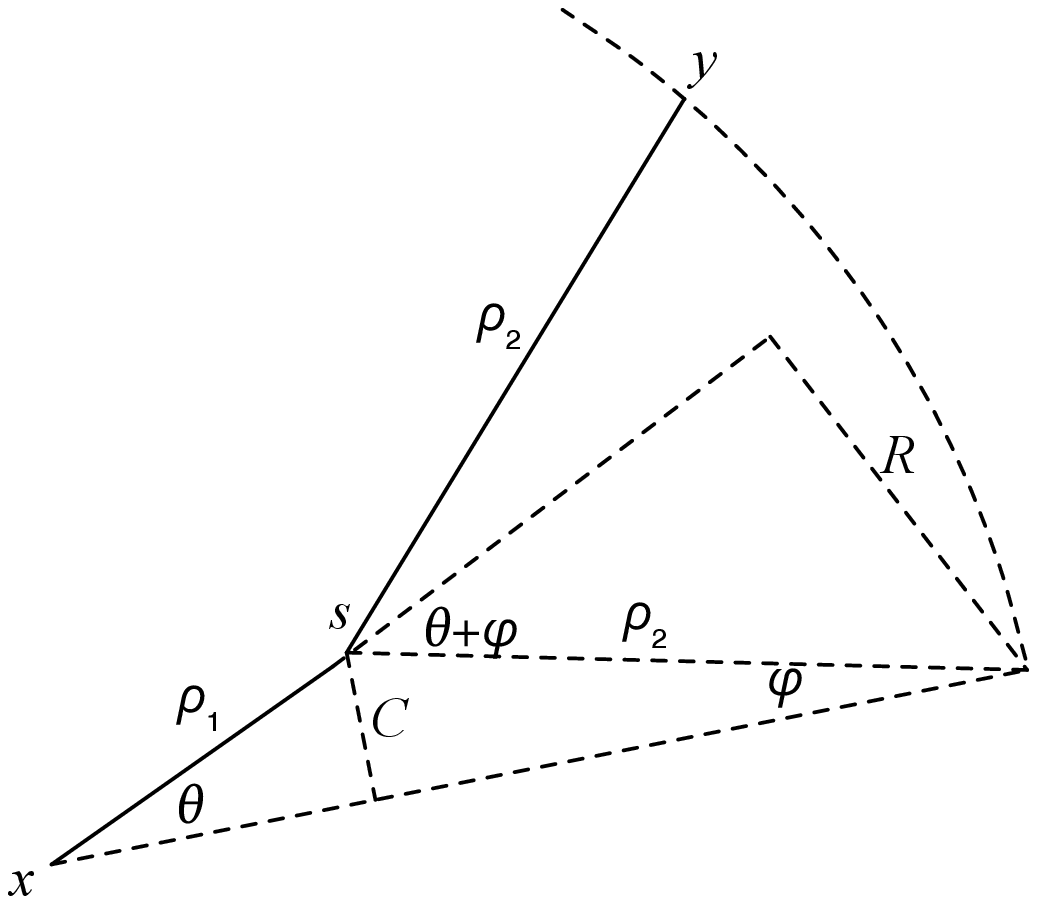}
\caption[]{$y$ count.}
\label{y-count}
\end{figure}
Because the line joining $x$ and $y$ has to be within
a bounded distance of $s$, radius $R$ is bounded by
	\[R\le \rho_2\sin(\theta+\varphi)\le \rho_2(\sin\theta+\sin\varphi)\le C(1+\rho_2/\rho_1).\]
Hence, there can  be at most $C_r(1+\rho_2/\rho_1)^{d-1}$
possible $y$'s with $|y-s|=\rho_2$ being a given integer. 
Thus, there are at most $C_r(\rho_1+\rho_2)^{d-1}$ pairs $(x,y)$ that have $s$
on the canonical path joining them. Furthermore, $\rho_1+\rho_2\le C_r|x-y|$.
%all norms are equivalent. so we'll show the above for the euclidian norm.
%assume the distance to s is at most 1.. 
%then, (\rho_1^2-1)+(\rho_2^2-1)=|x-y|^2. 
%choose C_r large enough to get over the case when one of the \rho's is 0 or 1
%otherwise, since \rho's are integers, \rho^2-1\ge 0.5\rho^2 and we are done.
Therefore, 
\[A_{s,n,\gamma}\le C_r\sum_{\rho_1,\rho_2=1}^{C_r n} (\rho_1+\rho_2)^{d+p-2-\gamma}\le 
C_r n^{d+p-\gamma}.\]
%because the ball is convex, |s|\le C_r n and hence so are \rho_1 and \rho_2.
This allows us to bound the above sum by
\[C_r\, n^{-d}\sum_{s\in\Z^d:|s|\le C_r n} G(T_s\w),\]
which, by the ergodic theorem, converges to a constant.
\end{proof}

We have shown that for a fixed $r\ge1$, 
if $d+p-1<\gamma<d+p$, then for all $z_{1,\ell}$ and $\bar z_{1,\ell}$ and 
$\P$-a.e.\ $\w$
\[\sup_n 
\int_{B_r(0)}\int_{B_r(0)}
\frac{|g_n(\w,z_{1,\ell},\bar z_{1,\ell},\xi)-
g_n(\w,z_{1,\ell},\bar z_{1,\ell},\zeta)|}{|\xi-\zeta|^\gamma}\,d\zeta\, d\xi\le C_r(\w).\]
Next, we apply an extension of Theorem 2.1.3 in \citep{stro-vara}; see Exercise 2.4.1 therein.
\newtheorem*{GRR}{Garsia-Rodemich-Rumsey's Theorem}
\begin{GRR}
Let $g:\R^d\to\R$ be a continuous function on $B_r(0)$ for some $r>0$. Let $\gamma>0$. %$\gamma>2d$. 
If
	\[\int_{B_r(0)}\int_{B_r(0)}\frac{|g(\xi)-g(\zeta)|}{|\xi-\zeta|^\gamma}\,d\zeta\,d\xi\le C_r,\]
then for $\xi,\zeta\in B_{r/2}(0)$,
	\[|g(\xi)-g(\zeta)|\le C'_r|\xi-\zeta|^{\gamma-2d},\]
where $C'_r$ depends on $C_r$ and the dimension $d$.
\end{GRR}
From this theorem it follows that
\[\sup_n 
|g_n(\w,z_{1,\ell},\bar z_{1,\ell},\xi)-g_n(\w,z_{1,\ell},\bar z_{1,\ell},\zeta)|
\le C_r(\w)\,|\xi-\zeta|^{\gamma-2d}.\]
Since $2d<d+p$, there exists a suitable $\gamma$ such that $\gamma-2d>0$. 
This shows that $\{g_n(\w,z_{1,\ell},\bar z_{1,\ell},\xi)\}$ is equicontinuous in $\xi\in B_r(0)$, for 
all $r\ge1$.
Let $g(\w,z_{1,\ell},\bar z_{1,\ell},\xi)$ be a uniform (on compacts) limit point, for fixed $\w$. 

Now compute, for any fixed $i_0\in\{1,\dotsc,d\}$ and $\xi=\sum_{i=1}^d\xi_i e_i$ with $\xi_i\ge0$,
%\begin{align}
%&\Bigg|n^{-(d-1)}\sum_{\substack{0\le k_i< [n\xi_i]\\ 1\le i\le d, i\ne i_0}}
%g_n\Big(\tfrac{[n\xi_{i_0}]}n e_{i_0}+\sum_{j\ne i_0}\tfrac {k_j}n e_j\Big)\nn\\
%&\qquad\qquad\qquad-\int\prod_{i\ne i_0}\one\{0\le\zeta_i\le\xi_i\} 
%g\Big(\xi_{i_0}e_{i_0}+\sum_{j\ne i_0}\zeta_j e_j\Big)\prod_{i\ne i_0}d\zeta_i\Bigg|\nn\\
%\begin{split}
%&\le
%\Bigg|n^{-(d-1)}\sum_{\substack{0\le k_i< [n\xi_i]\\ 1\le i\le d, i\ne i_0}}
%g_n\Big(\tfrac{[n\xi_{i_0}]}n e_{i_0}+\sum_{j\ne i_0}\tfrac {k_j}n e_j\Big)\\
%&\qquad\qquad\qquad-
%n^{-(d-1)}\sum_{\substack{0\le k_i< [n\xi_i]\\ 1\le i\le d, i\ne i_0}}
%g\Big(\tfrac{[n\xi_{i_0}]}n e_{i_0}+\sum_{j\ne i_0}\tfrac {k_j}n e_j\Big)
%\Bigg|
%\end{split}\label{line-1}\\
%\begin{split}
%&\quad+
%\Bigg|n^{-(d-1)}\sum_{\substack{0\le k_i< [n\xi_i]\\ 1\le i\le d, i\ne i_0}}
%g\Big(\tfrac{[n\xi_{i_0}]}n e_{i_0}+\sum_{j\ne i_0}\tfrac {k_j}n e_j\Big)\\
%&\qquad\qquad\qquad-
%\int\prod_{i\ne i_0}\one\{0\le\zeta_i\le\xi_i\} 
%g\Big(\xi_{i_0}e_{i_0}+\sum_{j\ne i_0}\zeta_j e_j\Big)\prod_{i\ne i_0}d\zeta_i\Bigg|.
%\end{split}\label{line-2}
%\end{align}
\begin{align}
&\Bigg|n^{-(d-1)}\!\!\!\!\sum_{\substack{0\le k_i< [n\xi_i]\\ 1\le i\le d, i\ne i_0}}\!\!\!\!
g_n\Big(\tfrac{[n\xi_{i_0}]}n e_{i_0}+\sum_{j\ne i_0}\tfrac {k_j}n e_j\Big)
-\int\prod_{i\ne i_0}\one\{0\le\zeta_i\le\xi_i\} 
g\Big(\xi_{i_0}e_{i_0}+\sum_{j\ne i_0}\zeta_j e_j\Big)\prod_{i\ne i_0}d\zeta_i\Bigg|\nn\\
&\le
\Bigg|n^{-(d-1)}\!\!\!\!\sum_{\substack{0\le k_i< [n\xi_i]\\ 1\le i\le d, i\ne i_0}}\!\!\!\!
g_n\Big(\tfrac{[n\xi_{i_0}]}n e_{i_0}+\sum_{j\ne i_0}\tfrac {k_j}n e_j\Big)
-
n^{-(d-1)}\!\!\!\!\sum_{\substack{0\le k_i< [n\xi_i]\\ 1\le i\le d, i\ne i_0}}\!\!\!\!
g\Big(\tfrac{[n\xi_{i_0}]}n e_{i_0}+\sum_{j\ne i_0}\tfrac {k_j}n e_j\Big)
\Bigg|\label{line-1}\\
&+
\Bigg|n^{-(d-1)}\!\!\!\!\sum_{\substack{0\le k_i< [n\xi_i]\\ 1\le i\le d, i\ne i_0}}\!\!\!\!
g\Big(\tfrac{[n\xi_{i_0}]}n e_{i_0}+\sum_{j\ne i_0}\tfrac {k_j}n e_j\Big)
-\int\prod_{i\ne i_0}\one\{0\le\zeta_i\le\xi_i\} 
g\Big(\xi_{i_0}e_{i_0}+\sum_{j\ne i_0}\zeta_j e_j\Big)\prod_{i\ne i_0}d\zeta_i\Bigg|.
\label{line-2}
\end{align}
The term on line \eqref{line-1} converges to 0 because of the uniform convergence of 
$g_n$ to $g$ and the term on line \eqref{line-2} converges to 0 because $g$ is continuous
and the sum is a Riemann sum. Similarly,
\begin{align*}
\Bigg|n^{-(d-1)}\sum_{\substack{0\le k_i< [n\xi_i]\\ 1\le i\le d, i\ne i_0}}
g_n\Big(\sum_{j\ne i_0}\tfrac {k_j}n e_j\Big)-
\int\prod_{i\ne i_0}\one\{0\le\zeta_i\le\xi_i\} 
g\Big(\sum_{j\ne i_0}\zeta_j e_j\Big)\prod_{i\ne i_0}d\zeta_i\Bigg|
\end{align*}
converges to 0, as $n\to\infty$.

On the other hand, 
\begin{align*}
&n^{-(d-1)}\sum_{\substack{0\le k_i< [n\xi_i]\\ 1\le i\le d, i\ne i_0}}
g_n\Big(\tfrac{[n\xi_{i_0}]}n e_{i_0}+\sum_{j\ne i_0}\tfrac {k_j}n e_j\Big)
-
n^{-(d-1)}\sum_{\substack{0\le k_i< [n\xi_i]\\ 1\le i\le d, i\ne i_0}}
g_n\Big(\sum_{j\ne i_0}\tfrac {k_j}n e_j\Big)\\
&\qquad=
n^{-d}\sum_{\substack{0\le k_i< [n\xi_i]\\ 1\le i\le d, i\ne i_0}}
f\Big([n\xi_{i_0}] e_{i_0}+\sum_{j\ne i_0} k_j e_j\Big)
-
n^{-d}\sum_{\substack{0\le k_i< [n\xi_i]\\ 1\le i\le d, i\ne i_0}}
f\Big(\sum_{j\ne i_0} k_j e_j\Big)\\
&\qquad=
n^{-d}\sum_{\substack{0\le k_i< [n\xi_i]\\ 1\le i\le d}}
\Big\{f\Big(e_{i_0}+\sum_j k_j e_j\Big)-f\Big(\sum_j k_j e_j\Big)\Big\}\\
&\qquad=
n^{-d}\sum_{x\in V_n} G'(T_x\w),
\end{align*}
%the above is a telescoping sum
where $G'(\w)=f(\w,\bar z_{1,\ell},\bar z_{1,\ell},e_{i_0})\in L^1(\P)$  and
%the closed loop property implies that going from (0,z_{1,\ell}) to (e+x,\bar z_{1,\ell}) minus
%going from (0,z_{1,\ell}) to (x,\bar z_{1,e\ll}) is the same as going from (x,\bar z_{1,\ell}) to (x+e,\bar z_{1,\ell})
\[V_n=\Big\{x=\sum_{i=1}^d k_i e_i:0\le k_i<[n\xi_i],\forall i\Big\}.\]
For the last equality above we used (b) of Lemma \ref{f properties}.
By (c) of Lemma \ref{f properties} we have 
%The mean-zero property of $F$ (property (ii) in Definition \ref{cK-def}) implies now that 
$\E[G']=0$ 
%%G' is the difference between a sum of F's starting at 
%%y and path \zbar (since we can use any starting path!) and going to 0 
%%and path \zbar   and F's starting at y and path \zbar and going to e_{i_0}
%%and path \zbar.  both have mean 0.
and the ergodic theorem
implies that the above converges to 0.

We have thus shown that 
%\begin{align*}
%&\int\prod_{i\ne i_0}\one\{0\le\zeta_i\le\xi_i\} 
%g\Big(\xi_{i_0}e_{i_0}+\sum_{j\ne i_0}\zeta_j e_j\Big)\prod_{i\ne i_0}d\zeta_i\\
%&\qquad=\int\prod_{i\ne i_0}\one\{0\le\zeta_i\le\xi_i\} 
%g\Big(\sum_{j\ne i_0}\zeta_j e_j\Big)\prod_{i\ne i_0}d\zeta_i\,
%\end{align*}
\begin{align*}
&\int\prod_{i\ne i_0}\one\{0\le\zeta_i\le\xi_i\} 
g\Big(\xi_{i_0}e_{i_0}+\sum_{j\ne i_0}\zeta_j e_j\Big)\prod_{i\ne i_0}d\zeta_i
=\int\prod_{i\ne i_0}\one\{0\le\zeta_i\le\xi_i\} 
g\Big(\sum_{j\ne i_0}\zeta_j e_j\Big)\prod_{i\ne i_0}d\zeta_i\,
\end{align*}
which implies that 
\begin{align*}
&\!\!\!\!\int\prod_{i\ne i_0}\one\{\xi_i'\le\zeta_i\le\xi_i\} 
g\Big(\xi_{i_0}e_{i_0}+\sum_{j\ne i_0}\zeta_j e_j\Big)\prod_{i\ne i_0}d\zeta_i
=\int\prod_{i\ne i_0}\one\{\xi_i'\le\zeta_i\le\xi_i\} 
g\Big(\xi_{i_0}' e_{i_0}+\sum_{j\ne i_0}\zeta_j e_j\Big)\prod_{i\ne i_0}d\zeta_i\,
\end{align*}
%\begin{align*}
%&\int\prod_{i\ne i_0}\one\{0\le\zeta_i\le\xi_i\} 
%g\Big(\xi_{i_0}e_{i_0}+\sum_{j\ne i_0}\zeta_j e_j\Big)\prod_{i\ne i_0}d\zeta_i
%=\int\prod_{i\ne i_0}\one\{0\le\zeta_i\le\xi_i\} 
%g\Big(\sum_{j\ne i_0}\zeta_j e_j\Big)\prod_{i\ne i_0}d\zeta_i\,
%\end{align*}
%any box that does not have a corner at 0 can be written using ones
%that do..
and hence $g$ is independent of $\xi_{i_0}$, for all $i_0\in\{1,\dotsc,d\}$.
This means $g(\w,z_{1,\ell},\bar z_{1,\ell},\xi)=g(\w,z_{1,\ell},\bar z_{1,\ell},0)=0$. In other words,
$g_n$ converges uniformly (on compacts) to $g=0$. Lemma \ref{CLASS K} is thus proved.
%\end{proof}
%no measurability issues:
%let \Omega_0 be the set where the ergodic theorem holds for both G and G'. 
%this is measurable and of full \P-measure. 
%then, our proof shows that for EACH \w in there GRR applies (which is an analysis
%theorem and is applied for fixed \w) and thus g_n is equicont AND any limit point
%is 0. thus for each \w\in\Omega_0  g_n\to0.

\begin{proof}[Proof of Lemma \ref{f properties}]  
Recall that $\{e_1,\dotsc,e_d\}$ be the canonical basis of $\R^d$. 
For each $1\le i\le d$, there exist $n_i$, $m_i$, $(a_{i,j})_{j=1}^{n_i}$, and $(\abar_{i,j})_{j=1}^{m_i}$ from $\rrange$ such that
	\[e_i=\abar_{i,1}+\cdots+\abar_{i,m_i}-a_{i,1}-\cdots-a_{i,n_i}.\]
Write $x=\sum_{i=1}^d b_i e_i$. Then, 
	\[x=\sum_{i=1}^d\sum_{j=1}^{m_i}b_i\abar_{i,j}-\sum_{i=1}^d\sum_{j=1}^{n_i}b_i a_{i,j}.\]
One can thus find a $y$ that has a path to both $0$ and $x$ and such that $|y|\le C|x|$. This proves (a).
  
To prove (b),  let  $(y,\ztil_{1,\ell})$ have a path to  both $(x,\zbar_{1,\ell})$ 
and $(\xbar,\zbar_{1,\ell})$.  Find $(y',\tilde z'_{1,\ell})$ that has a path to both $(y,\tilde z_{1,\ell})$ and 
$(0,z_{1,\ell})$.  Then, from \eqref{deff},
%\begin{align*}
% &f(\w, z_{1,\ell}, \bar z_{1,\ell}, \bar x) -  f(\w, z_{1,\ell}, \bar z_{1,\ell}, x)\\
%&\quad = \Bigl[  L(\w, (y',\tilde z'_{1,\ell}), (y,\tilde z_{1,\ell}))
%+ L(\w, (y,\tilde z_{1,\ell}), (\bar x,\bar z_{1,\ell}))  \\
%&\qquad\qquad - L(\w, (y',\tilde z'_{1,\ell}), (0,z_{1,\ell})) \Bigr]  \\
%&\qquad -\;  \Bigl[  L(\w, (y',\tilde z'_{1,\ell}), (y,\tilde z_{1,\ell}))
%+ L(\w, (y,\tilde z_{1,\ell}), (x,\bar z_{1,\ell}))  \\
%&\qquad\qquad - L(\w, (y',\tilde z'_{1,\ell}), (0,z_{1,\ell})) \Bigr] \\
%&\quad  = L(\w, (y,\tilde z_{1,\ell}), (\bar x,\bar z_{1,\ell}))
%- L(\w, (y,\tilde z_{1,\ell}), (x,\bar z_{1,\ell})).
%\end{align*}
\begin{align*}
 &f(\w, z_{1,\ell}, \bar z_{1,\ell}, \bar x) -  f(\w, z_{1,\ell}, \bar z_{1,\ell}, x)\\
&\quad = \Bigl[  L(\w, (y',\tilde z'_{1,\ell}), (y,\tilde z_{1,\ell}))
+ L(\w, (y,\tilde z_{1,\ell}), (\bar x,\bar z_{1,\ell})) 
 - L(\w, (y',\tilde z'_{1,\ell}), (0,z_{1,\ell})) \Bigr]  \\
&\qquad -\;  \Bigl[  L(\w, (y',\tilde z'_{1,\ell}), (y,\tilde z_{1,\ell}))
+ L(\w, (y,\tilde z_{1,\ell}), (x,\bar z_{1,\ell})) 
 - L(\w, (y',\tilde z'_{1,\ell}), (0,z_{1,\ell})) \Bigr] \\
&\quad  = L(\w, (y,\tilde z_{1,\ell}), (\bar x,\bar z_{1,\ell}))
- L(\w, (y,\tilde z_{1,\ell}), (x,\bar z_{1,\ell})).
\end{align*}
The last line above is independent of $z_{1,\ell}$ so we can substitute 
$\bar z_{1,\ell}$ for $z_{1,\ell}$ and get the second equality of part (b). 
For the first equality, by the definition of $f$ \eqref{deff}, the shift property \eqref{shiftL}, and the second equality in (b) just proved, 
we have for a new $(y,\ztil_{1,\ell})$
\begin{align*}
& f(T_x\w, \bar z_{1,\ell}, \bar z_{1,\ell}, \bar x-x)\\
&\qquad=L(T_x\w, (y,\tilde z_{1,\ell}), (\bar x-x,\bar z_{1,\ell}))
- L(T_x\w, (y,\tilde z_{1,\ell}), (0,\bar z_{1,\ell})) \\
&\qquad =  L(\w, (y+x,\tilde z_{1,\ell}), (\bar x,\bar z_{1,\ell}))
- L(\w, (y+x,\tilde z_{1,\ell}), (x,\bar z_{1,\ell})) \\
&\qquad =  f(\w, \bar z_{1,\ell}, \bar z_{1,\ell}, \bar x)
-  f(\w, \bar z_{1,\ell},  \bar z_{1,\ell}, x). 
\end{align*}

For (c), by the earlier observation, we can choose $y$ so that 
from $(y,\bar z_{1,\ell})$  there is a path to  both $(x,\bar z_{1,\ell})$ 
and $(0,\bar z_{1,\ell})$.  Then 
\[   f(\w, \bar z_{1,\ell}, \bar z_{1,\ell}, x)= L(\w, (y,\bar z_{1,\ell}), (x,\bar z_{1,\ell}))
- L(\w, (y,\bar z_{1,\ell}), (0, \bar z_{1,\ell})).  \]
Both $L$-terms above equal sums 
$\sum_{i=0}^{m-1} F(\wz_i, a_{i+1})$ where $\wz_0=(T_y\w, \bar z_{1,\ell})$
and $\wz_m=(T_u\w, \bar z_{1,\ell})$ with $u=x$ or $u=0$.  Both have 
$\E$-mean zero by property (ii) of Definition \ref{cK-def}.  
\end{proof}

\bibliographystyle{acmtrans-ims}
\bibliography{tmfrefs}
\end{document}